\newcommand{\jump}[1]{\left[\!\left[#1\right]\!\right]}
\newcommand{\ave}[1]{\left\{\!\!\left\{#1\right\}\!\!\right\}}
\newcommand{\triple}[1]{|\!|\!|#1|\!|\!|}
\newtheorem{definition}{Definition}[section]
\newtheorem{proposition}{Proposition}[section]
\newtheorem{corollary}[proposition]{Corollary}
\newtheorem{lemma}[proposition]{Lemma}
\newtheorem{theorem}[proposition]{Theorem}
\numberwithin{equation}{section}
\newcounter{rem}[section]
\renewcommand{\therem}{\thesection.\arabic{rem}}
\newenvironment{rem}{
\refstepcounter{rem} {\vspace{3mm}\noindent\bf Remark \therem :}
\begin{rm}}{\end{rm}\vspace{3mm}}
\title{Time-Domain Boundary Integral Methods in Linear Thermoelasticity}
\author{{\sc G. C. Hsiao}\thanks{Department of Mathematical Sciences, University of Delaware,
Newark, DE 19716-2553, USA \quad
Email: {\tt ghsiao@udel.edu}} \quad
and \; {\sc T. S\'{a}nchez-Vizuet} \thanks{Courant Institute of Mathematical Sciences, New York University, New York, NY 10012-1185 USA\quad
Email: {\tt tonatiuh@cims.nyu.edu}.}}
\date{\small{\it Dedicated to the memory of Francisco-Javier Sayas}}
\begin{document}
\maketitle

\begin{abstract}
This paper is concerned with the application of time domain boundary integral methods(TDBIMs) to a non-stationary boundary value problem for the thermo-elasto-dynamic equations, based on the Lubich approach via the Laplace transform. Fundamental solutions of the transformed thermo-elasto-dynamic equations are constructed explicitly by the H\"{o}rmander method. Simple- and double-layer potential boundary integral operators are introduced in the transformed domain and their coercivity is established.

Based on the estimates of various boundary integral operators in the transformed domain, existence and uniqueness results of solutions are established in the time domain. These results may serve as a mathematical foundation for the semi-discretization and full-discretization schemes based on the boundary element method and convolution quadrature method for time domain boundary integral equations arising in thermoelasticity. 
\end{abstract}
{\bf Key words}: Time-domain boundary integral equations, Linear thermoelasticity, Boundary integral operators, Fundamental solution .

\noindent
{\bf Mathematics Subject Classifications (2010)}: 74F05,  74J05,  45P05, 35L05,  65N38, 35J20.
%
\section{Introduction}
%
Boundary integral methods (BIMs) (or rather boundary integral equation methods (BIEMs)) originating from problems of mathematical physics and mechanics have become one of the essential tools for solving boundary value problems for linear elliptic partial differential equations. The numerical discretizations of the variational methods dealing with the weak formulations of boundary integral equations are known as boundary element methods (BEMs). In fact, today, any numerical realization of BIMs is generally referred to as BEMs. They have been proven to be as efficient computationally as finite element methods (FEMs) for constructing numerical solutions of partial differential equations. 

Non-stationary boundary value problems of partial differential equations of parabolic or hyperbolic type can be treated with time domain boundary integral methods (TDBIMs) in the same manner as potential methods for elliptic problems. TDBIMs have received increasing attention in recent years. Roughly speaking, there are three approaches for TDBIMs: one is based on the formulation of problems in a setting of appropriate anisotropic Sobolev spaces, another one based on either the Laplace or Fourier-Laplace transform techniques, and the last one is to treat the time evolution by posing the governing equations as a dynamical system taking values in a Banach Space. 

The former approach dates back to the work of Arnold and Noon \cite{ArNo:1989}, Costabel \cite{Co:1990} and Hsiao and Sarannen \cite{HsSa:1993, HsSa:1991} for the heat equation and Hebeker and Hsiao \cite{HeHs:1993, HeHs:2007} for the Stokes and the Oseen equations in compressible as well as in-compressible fluid flow. For the second approach, we refer to the fundamental work in French school by Bamberger and Ha Duong \cite{BaHa:1986a, BaHa:1986b} for the wave equation, the work by Lubich and Schneider \cite{Lu:1988a, LuSc:1992} for partial differential equations of parabolic and hyperbolic types. The latter approach is more recent and follows the lines laid out in \cite{HaQiSaSa:2015} for the acoustic wave equation and has been applied to the scattering of acoustic waves by elastic and piezoelectric solids in \cite{BrSaSa:2016}.

Lubich's approach has recently been advanced by Sayas and his co-workers (see for instance \cite{LaSa:2009a}, the monograph \cite{Sayas:2016}, and references therein) and has become particularly popular in recent years for treating time-dependent boundary integral equations of convolution type. An essential feature of this approach is that it works directly on the data in the time domain rather in the transformed domain. Moreover, the approach does not depend upon the explicit construction of the time-dependent fundamental solutions of time dependent partial differential equations under considered. 

This paper is concerned with the application of TDBIMs to a non-stationary boundary value problem for the thermo-elasto-dynamic equations, based on the Lubich approach via the Laplace transform. This work, together with \cite{HsSaSaWe:2016}, where the problem was treated through a coupled PDE/BIE formulation, may be considered as a generalization of some results in \cite{HsSaSa:2016} where the purely elasto-acoustic dynamical problem was addressed. It is also related to the work in \cite{BaHaHsSa:2015} for treating the non-stationary Stoke equations, and to \cite{SaSa:2016} where piezoelectric effects were considered. 

The basic governing dynamic equations in thermoelasticity together with their transformed formulations in the Laplace domain---known as the system of thermoelastic pseudo-oscillation operators---are given in Section \ref{sec:GoverningEquations}. This section ends with a solution expansion consisting of solutions to the Yukawa potential equations. The basic thermoelastic oscillation problems are introduced in Section \ref{sec:ThermoelasticOscillation} and Green's representations for the solutions of the partial differential equations are given in terms of the corresponding fundamental solution which are constructed explicitly in the Appendix A by means of H\"{o}rmander's method. In terms of the fundamental solution, four basic thermoelastic potentials are introduced. These will be used to transform the boundary value problems into boundary integral equations by the indirect method based on a layer potential ansatz. We end the section with a theorem summarizing some classical results due to Kupradze \cite[Theorem 3.1 , p. 572 ]{Ku:1979} concerning existence and uniqueness of solutions to thermoelastic pseudo-oscillation problems. 

The last two sections (Section \ref{sec:BoundaryValueProblems} and Section \ref{sec:TimeDomain}) contain the main results of the paper. Section \ref{sec:BoundaryValueProblems} contains the main estimates for the boundary integral operators in terms of the complex parameter $s$ of the Laplace transform. In particular we establish the coercivity of both simple- and double-layer boundary integral operators in the relevant Sobolev spaces. These estimates are crucial for the main results of weak solutions of initial- boundary value problems in the time domain formulated in Section \ref{sec:TimeDomain}, where existence and uniqueness results for two basic model problems are treated. Namely, the initial-Dirichlet and initial-Neumann boundary value problems for the elastic displacement and temperature fields governed by the linear thermoelastic-dynamic equations introduced in Section \ref{sec:GoverningEquations}. These results may serve as the mathematical foundation for semi discrete and fully discrete numerical schemes based on the boundary element method combined with convolution quadrature (see e.g. \cite{Lu:1988a,Lu:1988}) for time-domain boundary integral equations arising in thermoelasticity. 

%
\section{Governing equations}\label{sec:GoverningEquations}
%
Let $\Omega^-$ be a bounded domain in $\mathbb{R}^3$ with Lipschitz boundary $\Gamma$ and let $\Omega^+ := \mathbb{R}^d \setminus \overline\Omega^-$ its exterior. In linear thermoelasticity, the governing equations for the elastic displacement field $\bf {U}$ and temperature variation field $\Theta$ are the thermo-elasto-dynamic equations consisting of an {\it equation of motion}
\begin{align} \label{eq:2.1}
\rho \frac{\partial^2\mathbf{U}} {\partial t^2} - \Delta^{*} \mathbf{U} + \gamma \, \nabla~\Theta 
 = \mathbf{0}, 
 \end{align}
 and an {\it equation of energy}
 \begin{align} \label{eq:2.2}
 \frac{1}{\kappa}\frac{\partial \Theta}{\partial t} - \Delta \Theta + \eta\; \frac{\partial}{\partial t}(\nabla\cdot \mathbf{U}) = 0
 \end{align}
in $\Omega^- \times (0,T)$ (or $\Omega^+ \times ((0, T)$), where $T$ is a given positive constant. Here $ \rho$ is the constant density of the elastic body, $\kappa = k/\delta$ the thermal diffusivity,
$k$ the thermal conductivity of the medium, and $\delta$ the specific heat. The coupling constants $\gamma$ and $\eta$ are defined by
\[
\gamma := (\lambda + 2/3 \,\mu) \alpha, \qquad \text{ and } \qquad \eta := \zeta \Theta_0 /k,
\]
where $\alpha$ is the volumetric coefficient of thermal expansion, and $\Theta_0$ is the reference temperature at the thermoelastic medium 
when it is not subject to external stresses or strains. After a change in the temperature from $\Theta_0$ to $\tilde \Theta$, the increased temperature $\Theta = \tilde \Theta - \Theta_0$ will create a linear displacement field in the medium under that will induce internal stress. 
In the formulation, as usual, $\Delta^*$ is the Lam\'e operator defined by 
\begin{eqnarray*}
 \Delta^* \mathbf{U} &:=& \mu \Delta \mathbf{U} + (\lambda + \mu) \nabla\,\nabla\cdot \mathbf{U}\\
 &=& \nabla\cdot \widetilde{\bf \bm{\sigma}}(\bf U), 
 \end{eqnarray*}
where $\mu$ and $\lambda$ are the Lam\'e constants for the isotropic medium, and $\widetilde{\bm {\sigma}}({\bf U})$ and $\widetilde{\bm \varepsilon} ({\bf U}) $ are the linearized elastic stress and strain tensors which, for an isotropic elastic medium are given respectively by:
 \[
 \widetilde{\bm \sigma} ({\bf U}) = (\lambda\;\nabla\cdot{\bf U}) {\bf I} + 2 \mu \; \widetilde{\bm \varepsilon} ({\bf U}) \quad \mbox{and }\quad
\quad \widetilde{\bm \varepsilon}({\bf U}) = \tfrac{1}{2}( \nabla{\bf U} + (\nabla{\bf U})^\prime),
\]
above, the identity operator is denoted by ${\bf I}$ and the superscript $^\prime$ is used to denote transposition. In a thermoelastic medium, the corresponding stress and strain tensors are
 \[
 {\bm \sigma}({\bf U}, \Theta) = \widetilde{\bm \sigma} ({\bf U}) - \gamma \, \Theta\, {\bf I} \quad \mbox{and }\quad
\quad \bm \varepsilon ({\bf U}, \Theta) = \widetilde{\bm \varepsilon}({\bf U}) - \frac{\gamma}{3 \lambda + 2 \mu} \Theta \,{\bf I}.
\]
These relations, establishing the linear dependence between the strain, temperature and stress, are called Duhamel-Neumann's law. The thermoelastic coupling is measured by the dimensionless constant 
\cite{No:1975} 
\[
\epsilon := \frac{\gamma \eta \kappa}{\lambda + 2 \mu},
\]
which assumes small positive values for most of thermoelastic media. If the thermal effect is neglected, Duhamel-Neumann's law will obviously give Hooke's law of the classical theory for an arbitrary isotropic medium (see, e.g. \cite{Ku:1979}). In the thermoelastic medium, the given physical constants $  \rho, \lambda, \mu, \gamma, \eta,
\kappa$, are assumed to satisfy the inequalities:
\[
 \rho > 0, \;\mu > 0 , \;3 \lambda + 2 \mu > 0, \;\gamma/\eta > 0, \;\kappa > 0.
\]
In order to formulate and analyze the Time Domain Boundary Integral Equations (TDBIE), we will first transform the equations in the Laplace domain. 
Throughout the paper let the complex plane be denoted by $\mathbb{C}$ and its positive  half-plane denoted by 
\[
\mathbb{C}_+:= \{ s \in \mathbb{C} : \mathrm{Re}(s) > 0\}.
\]
We begin with the Laplace transform for an ordinary complex-valued function. Let $F \colon [0, \infty) \to \mathbb{C}$ be a complex-valued function with limited growth at infinity. We denote the Laplace transform of $F$ by 
\[
f(s)= \mathcal{L}{F}(s) := \int_0^\infty e^{-st} F(t) dt,
\]
provided it exists. A common criterion for limited growth at infinity is that $F$ be of exponential order , i.e., that there exists $T>0$ and positive $ M \equiv M(T)$ and $\alpha \equiv \alpha(T)$ such that
\[
|F(t)| \leq M e^{\alpha t} \quad \forall t \geq T.
\]
In the following, to make notation lighter, we will suppress the explicit dependence of the variables with respect to position and the Laplace parameter and will let $\mathbf{u} := \mathbf{u}(x,s)= \mathcal{L}\{ {\bf U}(x,t)\}$ and $ \theta:=\theta(x,s) = \mathcal{L}\{{\Theta(x,t)} \}$. Then, equations \eqref{eq:2.1} and \eqref{eq:2.2} become
\begin{subequations}\label{eq:2.3}
\begin{alignat}{6} 
 \Delta^{*} \mathbf{u} -  \rho s^2 \mathbf{u} - \gamma\, \nabla ~\theta =\,& \mathbf{0} &\quad& \mbox{in}\quad \Omega^-~ (\text{or} ~ \Omega^+), \\
 \Delta \theta - (s/\kappa) ~\theta - s \eta ~\nabla\cdot \mathbf{u} =\,& 0  &\quad& \mbox{in} \quad \Omega^-~(\text{or} ~\Omega^+). 
\end{alignat}
\end{subequations}
The system of equations \eqref{eq:2.3} can be written in terms of the matrix of operators in the form 
\begin{equation} \label{eq:2.5}
{\bf B}(\partial_x , s) \begin{pmatrix}
{\mathbf u} \\
{\theta}\\
\end{pmatrix}
:= \left (
\begin{array} {ll} 
  \Delta^* - \rho s^2 & -\gamma \,\nabla \\
  -s \,\eta\, \nabla^{\top} & \Delta -s/\kappa \\
 \end{array} \right )
 \begin{pmatrix}
{\mathbf u} \\
\theta 
\end{pmatrix}
=  \begin{pmatrix} {\bf 0}\\ 0 \end{pmatrix}\quad \mbox{in} \quad \quad \Omega^- ~(\text{or}~ \Omega^+). 
\end{equation}
The operator ${\bf B}(\partial_x , s)$---referred to as the {\it thermoelastic pseudo-oscillation operator} in \cite{Ku:1979}---is not self-adjoint; its adjoint operator ${\bf B}^*(\partial_x , s)$ may be obtained from ${\bf B}(\partial_x , s)$ by replacing $\gamma$ with $ - s\eta $ and vice versa.

Following Kupradze \cite[p.62]{Ku:1979}, we will say that a vector-valued function $ ({\bf u}, \theta)^{\top} $ in a domain $D$ is \textit{regular}, if $ ({\bf u}, \theta)^{\top} \in C^2(D)^4\cap C^1(\overline{D})^4$ (also denoted by $ {\bf C}^2(D)\cap {\bf C}^1(\overline{D})$ ). As for the time-harmonic thermoelastic-oscillation (Kupradze \cite[p.139 ]{Ku:1979}, see also Cakoni \cite{Cakoni:2000}), the thermoelastic pseudo-oscillation admits a similar solution expansion:
\begin{lemma}
The regular solution $ ({\bf u}, \theta)$ of \eqref{eq:2.5} admits in the domain of regularity a representation 
of the form:
\begin{equation}\label{eq:2.6}
 (\mathbf{u}, \theta) = (\mathbf{u}_1, \theta_1) + (\mathbf{u}_2, \theta_2) + (\mathbf{u}_3, \theta_3), 
 \end{equation}
with $({\bf u}_k, \theta_k), k = 1,2,3 $ satisfying 
\begin{alignat*}{8}
(\Delta -\lambda^2_1)\mathbf{u}_1 =\,& {\bf 0}, \qquad& (\Delta -\lambda^2_2)\mathbf{u}_2 =\,& {\bf 0}, \qquad& (\Delta -\lambda^2_3) \mathbf{u}_3 =\,& {\bf 0}, \\
\nabla \times \mathbf{u}_1 =\,& {\bf 0}, \qquad& \nabla\times\mathbf{u}_2 =\,& {\bf 0}, \qquad& \nabla\cdot \mathbf{u}_3 =\,& 0, \\
(\Delta -\lambda^2_1)\theta_1 =\,& 0, \qquad& (\Delta -\lambda^2_2)\theta_2 =\,& 0, \qquad& \theta_3 =\,& 0.
\end{alignat*} 
\end{lemma}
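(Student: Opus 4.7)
The plan is to produce the decomposition by a Helmholtz-type splitting of the displacement followed by an algebraic decoupling of the resulting scalar system. Write the regular solution in the form $\mathbf{u} = \nabla \phi + \nabla\times\boldsymbol{\psi}$ with $\nabla\cdot\boldsymbol{\psi} = 0$ (locally admissible on the domain of regularity). Using the identity $\Delta^* \mathbf{u} = (\lambda+2\mu)\nabla\Delta\phi + \mu\,\nabla\times \Delta\boldsymbol{\psi}$ and inserting into \eqref{eq:2.5}, I would rewrite the elastic equation as
\begin{equation*}
\nabla\bigl[(\lambda+2\mu)\Delta\phi - \rho s^2\phi - \gamma\theta\bigr] + \nabla\times\bigl[\mu\Delta\boldsymbol{\psi} - \rho s^2\boldsymbol{\psi}\bigr] = \mathbf{0},
\end{equation*}
and the energy equation as $\Delta\theta - (s/\kappa)\theta - s\eta\,\Delta\phi = 0$. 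This allows me to isolate the solenoidal piece $\mathbf{u}_3 := \nabla\times\boldsymbol{\psi}$, which automatically satisfies $\nabla\cdot\mathbf{u}_3 = 0$ and decouples from the temperature. Defining $\lambda_3^2 := \rho s^2/\mu$ and $\theta_3 := 0$, the equation $(\Delta - \lambda_3^2)\mathbf{u}_3 = \mathbf{0}$ follows directly.

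For the irrotational part, I would work with the coupled scalar system
\begin{equation*}
(\lambda+2\mu)\Delta\phi - \rho s^2\phi = \gamma\,\theta, \qquad \Delta\theta - (s/\kappa)\theta = s\eta\,\Delta\phi.
\end{equation*}
Eliminating $\phi$ (apply $(\lambda+2\mu)\Delta - \rho s^2$ to the second equation and use the first) yields a fourth-order scalar equation of the form $(\Delta - \lambda_1^2)(\Delta - \lambda_2^2)\theta = 0$, where $\lambda_1^2,\lambda_2^2$ are the two roots of the quadratic
\begin{equation*}
X^2 - \left[\frac{\rho s^2}{\lambda+2\mu} + \frac{s}{\kappa} + \frac{\gamma\eta s}{\lambda+2\mu}\right] X + \frac{\rho s^3}{\kappa(\lambda+2\mu)} = 0.
\end{equation*}
The same fourth-order operator annihilates $\phi$. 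A standard partial-fraction / projection argument (using $1 = \frac{\Delta - \lambda_2^2}{\lambda_1^2 - \lambda_2^2} - \frac{\Delta - \lambda_1^2}{\lambda_1^2 - \lambda_2^2}$ applied to $\theta$) then produces the splittings $\theta = \theta_1 + \theta_2$ and $\phi = \phi_1 + \phi_2$ with $(\Delta - \lambda_k^2)\theta_k = 0$ and $(\Delta - \lambda_k^2)\phi_k = 0$ for $k=1,2$.

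Setting $\mathbf{u}_k := \nabla\phi_k$ gives $\nabla\times\mathbf{u}_k = \mathbf{0}$ and $(\Delta - \lambda_k^2)\mathbf{u}_k = \mathbf{0}$, producing the claimed representation \eqref{eq:2.6}. Consistency with the coupling requires the algebraic identity
\begin{equation*}
\left[(\lambda+2\mu)\lambda_k^2 - \rho s^2\right]\left[\lambda_k^2 - s/\kappa\right] = \gamma\eta s\,\lambda_k^2, \qquad k = 1,2,
\end{equation*}
which is precisely the characteristic equation above; this guarantees that the per-mode coupling $\phi_k = \gamma\theta_k/[(\lambda+2\mu)\lambda_k^2 - \rho s^2]$ is compatible and the original PDE system is satisfied by each pair $(\mathbf{u}_k,\theta_k)$.

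The main obstacle is the algebraic decoupling: one must verify the characteristic quadratic has two distinct roots (generically true for $s \in \mathbb{C}_+$ under the positivity hypotheses on the physical constants, since the discriminant does not vanish identically) so that the partial-fraction decomposition is well-defined; the degenerate case $\lambda_1^2 = \lambda_2^2$ would require a generalized-eigenvector-type argument, but for the values of $s$ relevant here the non-degenerate case applies. A secondary technical point is justifying the Helmholtz decomposition on the possibly non-simply-connected domain of regularity, which is handled by the standard local representation of curl-free and divergence-free fields combined with the smoothness of the regular solution.
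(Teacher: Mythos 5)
Your overall strategy (Lam\'e/Helmholtz potentials, elimination to the quartic operator $(\Delta-\lambda_1^2)(\Delta-\lambda_2^2)$, then the partial-fraction projection) reproduces the right algebra---your characteristic quadratic is exactly \eqref{eq:2.7a}---but there is a genuine gap at the decoupling step. From
\begin{equation*}
\nabla\bigl[(\lambda+2\mu)\Delta\phi-\rho s^2\phi-\gamma\theta\bigr]+\nabla\times\bigl[\mu\Delta\boldsymbol{\psi}-\rho s^2\boldsymbol{\psi}\bigr]=\mathbf 0
\end{equation*}
you cannot conclude that each bracket vanishes separately: taking the divergence and the curl only shows that the scalar bracket is harmonic and that the vector bracket is componentwise harmonic (using $\nabla\cdot\boldsymbol{\psi}=0$). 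Hence the scalar system $(\lambda+2\mu)\Delta\phi-\rho s^2\phi=\gamma\theta$ and the equation $(\Delta-\lambda_3^2)\,\nabla\times\boldsymbol{\psi}=\mathbf 0$, on which everything afterwards rests, do not ``follow directly.'' Removing the residual harmonic fields by re-gauging the potentials is precisely the completeness question for the Lam\'e-type representation (Sternberg--Gurtin-type theorems in elastodynamics), and it requires an argument; the only obstacle you flag is the local existence of the Helmholtz splitting, which is not the delicate point.

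The paper's proof avoids this issue entirely by making the splitting constructive: using $\Delta\mathbf u=\nabla\nabla\cdot\mathbf u-\nabla\times(\nabla\times\mathbf u)$ together with the equation of motion itself, it defines $\mathbf u^{(1)}:=\frac{\lambda+2\mu}{\rho s^2}\nabla\nabla\cdot\mathbf u-\frac{\gamma}{\rho s^2}\nabla\theta$ and $\mathbf u^{(2)}:=-\frac{\mu}{\rho s^2}\nabla\times(\nabla\times\mathbf u)$, so that $\mathbf u=\mathbf u^{(1)}+\mathbf u^{(2)}$ holds identically, $\nabla\times\mathbf u^{(1)}=\mathbf 0$, $\nabla\cdot\mathbf u^{(2)}=0$, and the equations $(\Delta-\lambda_3^2)\mathbf u^{(2)}=\mathbf 0$, $(\Delta-\lambda_1^2)(\Delta-\lambda_2^2)\mathbf u^{(1)}=\mathbf 0$ and $(\Delta-\lambda_1^2)(\Delta-\lambda_2^2)\theta=0$ follow by direct computation (essentially by taking the curl and the divergence of the system), with no potentials and no gauge freedom to control. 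The final step is the same projection you use, $\mathbf u_1=\frac{\Delta-\lambda_2^2}{\lambda_1^2-\lambda_2^2}\mathbf u^{(1)}$, etc., and both routes equally require $\lambda_1^2\neq\lambda_2^2$. You can repair your argument either by invoking (or proving) a completeness theorem for the potential representation in the Laplace-domain setting, or more simply by replacing the potential ansatz with these explicit formulas, after which your elimination and projection steps go through unchanged.
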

\begin{proof}
From the first equation of \eqref{eq:2.5}, let
\begin{equation*}
\mathbf {u}^{(1)} :=\, \frac{\lambda + 2 \mu} {\rho s^2} \, \nabla \,\nabla\cdot \mathbf{u} - \frac{\gamma}{\rho s^2} \nabla\, \theta, \quad \text{ and } \quad 
\mathbf {u}^{(2)} :=\, - \frac{\mu} {\rho s^2}\, \nabla\times(\nabla\times\mathbf {u}), 
\end{equation*} 
by making use of the identity $ \Delta \mathbf {u} = \nabla\,\nabla\cdot \mathbf {u} - \nabla\times(\nabla\times\mathbf{u})$. Then $\nabla\times{\bf u}^{(1)} = {\bf 0}$ and $ \nabla\cdot{\bf u}^{(2)} = 0$. From the second equation of \eqref{eq:2.5}, we have the decomposition: 
\[
(\mathbf{u}, \theta) = (\mathbf{u}^{(1)} + \mathbf{u}^{(2)}, \theta)
\]
where $ (\mathbf{u}^{(1)}, \mathbf{u}^{(2)})$ and $\theta$ satisfy the equations:
\begin{alignat*}{8}
(\Delta - \lambda ^2_1) (\Delta - \lambda^2_2) \;\theta =\,& 0, & & \nonumber \\
(\Delta - \lambda^2_1) (\Delta - \lambda^2_2) \;\mathbf{u}^{(1)} =\,& \mathbf{0}, \qquad \qquad&  \nabla \times\mathbf{u}^{(1)} =\,& \mathbf{0}, \\
(\Delta - \lambda^2_3) \;\mathbf{u}^{(2)} =\,& \mathbf{0}, \qquad \qquad& \nabla\cdot\mathbf{u}^{(2)} =\,& \mathbf{0}.
\end{alignat*}
The constants $\lambda^2_1, \lambda^2_2 $ are determined by the equations:
\begin{equation}
\lambda^2_1 + \lambda^2_2  = \frac{s}{\kappa} ( 1 + \epsilon) + \lambda^2_p, \qquad \qquad \lambda^2_1 \lambda^2_2 \;= \left(\frac{s}{\kappa}\right) \lambda^2_p , \label{eq:2.7a} 
\end{equation} 
with $ \lambda^2_3 = \rho s^2/ \mu, \; \lambda^2_p = \rho s^2/ (\lambda + 2 \mu)$ and $\epsilon = \gamma\, \eta\, \kappa / (\lambda + 2 \mu)$. It may be shown that in the domain of regularity, the regular solution of \ref{eq:2.5} is infinitely differentiable. 

Next, the decomposition of $(\mathbf{u}, \theta) = (\mathbf{u}_1 + \mathbf{u}_2 +\mathbf{u}_3, \theta_1 + \theta_2 + \theta_3 ) $ can be obtained according to 
\begin{alignat*}{8}
\mathbf{u}_1  =\,& \frac{(\Delta - \lambda^2_2) } {\lambda^2_1 - \lambda^2_2} \mathbf{u}^{(1)}, \qquad& \mathbf{u}_2 =\,& \frac{(\Delta - \lambda^2_1)} {\lambda^2_2 - \lambda^2_1} \mathbf{u}^{(1)}, \qquad& \mathbf{u}_3 =\,& \mathbf{u}^{(2)}, \\
\theta_1 =\,& \frac{ (\Delta - \lambda^2_2) } {\lambda^2_1 - \lambda^2_2} \theta, \qquad& \theta_2 =\,& \frac{ (\Delta - \lambda^2_1)} {\lambda^2_2 - \lambda^2_1} \theta, \qquad& \theta_3 =\,& 0.
\end{alignat*}
\end{proof}

We remark as in the dynamic elasticity, $ c_s:= \sqrt{\mu}$ and $ c_p:= \sqrt{\lambda + 2 \mu}$
are the phase velocities for the transversal and longitudinal wave, respectively. The 
parameter $\epsilon \in (0,1) $ plays an important role, and for most of bodies, it is much smaller than unity---see, e.g. \cite{No:1975}. When $\epsilon = 0 $, the deformation and temperature fields become separated, and from eq.\eqref{eq:2.7a}, we may obtain, explicitly, the wave numbers 
\[
\lambda^2_1 = \frac{s}{ \kappa}, \qquad \lambda^2_2 =\frac{ \rho\, s^2}{\lambda + 2 \mu}, \qquad \mbox{ and } \qquad \lambda^2_3 = \frac{\rho\, s^2} {\mu}.
\]
%
\section{Thermoelastic pseudo-oscillation problems}\label{sec:ThermoelasticOscillation} 
%
We begin with the first and second Green formulas for the boundary-value problems of thermoelastic pseudo-oscillations in $\Omega^-$. Let $(\mathbf{v},  v)^{\top} $ and $ (\mathbf{u}, \theta)^{\top} \in \mathbf{C}^2(\Omega^-) \cup \mathbf{C}^1(\bar\Omega^-) $ be regular functions in $\Omega^-$. 
Then first Green's formula assumes the form: 
\begin{align} \label{eq:3.1}
\int_{\Omega^-} \mathbf{B} \begin{pmatrix}
{\mathbf u} \\
\theta 
\end{pmatrix} \cdot \begin{pmatrix}
{\mathbf v} \\
 v 
\end{pmatrix} 
dx &+ \mathcal{A}_{\Omega^-} \Big( \begin{pmatrix}
{\mathbf u} \\
\theta 
\end{pmatrix}, \begin{pmatrix}
{\mathbf v} \\
 v
\end{pmatrix} \Big) = \int_{\Gamma}\mathcal{R}_N \begin{pmatrix}
{\mathbf u} \\
\theta 
\end{pmatrix} \cdot \begin{pmatrix}
{\mathbf v} \\
 v 
\end{pmatrix}
d\Gamma, 
\end{align}
while the second Green formula is given by 
\begin{align}
\int_{\Omega^-}\mathbf{B} \begin{pmatrix}
{\mathbf u} \\
\theta 
\end{pmatrix} \cdot \begin{pmatrix}
{\mathbf v} \\
 v 
\end{pmatrix} 
dx & - \int_{\Omega^-} \begin{pmatrix}
{\mathbf u} \\
\theta 
\end{pmatrix} \cdot \mathbf{B}^* \begin{pmatrix}
{\mathbf v} \\
v 
\end{pmatrix} dx \nonumber \\ 
&= \int_{\Gamma} \mathcal{R}_N \begin{pmatrix}
{\mathbf u} \\
\theta 
\end{pmatrix} \cdot \begin{pmatrix}
{\mathbf v} \\
v 
\end{pmatrix}
d\Gamma - \int_{\Gamma} \begin{pmatrix}
{\mathbf u} \\
\theta 
\end{pmatrix}\cdot \mathcal{R}^{*}_N \begin{pmatrix}
{\mathbf v} \\
 v 
\end{pmatrix} d\Gamma. \label{eq:3.2} 
\end{align}
In the first Green's formulae, $\mathcal{A}_{\Omega^-}$ is a bilinear (or sesquilinear in the complex case) form defined by 
\begin{align*} 
 \mathcal{A}_{\Omega^-} \left( \begin{pmatrix}
{\mathbf u} \\
\theta 
\end{pmatrix}, \begin{pmatrix}
{\mathbf v} \\
 v
\end{pmatrix} \right)&:= \int_{\Omega^-}\Big( \bm{\sigma}( \mathbf{u} ) : \bm{ \varepsilon }( \mathbf{ v}) + \rho s^2 \mathbf{u} \cdot \mathbf{v} - \gamma~\theta\nabla\cdot\mathbf{v} \nonumber  \\
&  \quad \qquad + \,s\eta \nabla\cdot\mathbf{u}\,  v + \nabla \theta \cdot \nabla  v + \frac{s}{\kappa} \theta\,  v \Big)dx, 
\end{align*}
$\mathbf{B}^*(\partial_x, s)$ is the adjoint operator to $\mathbf{B}(\partial_x, s)$ defined by
\[
\mathbf{B}^*(\partial_x , s) \begin{pmatrix}
{\mathbf v} \\
{ v}\\
\end{pmatrix}
:= \left (
\begin{array} {ll}
  \Delta^* - \rho s^2 & s\,\eta \;\nabla \\
  \gamma~ \; \nabla^{\top} & \Delta - s/\kappa \\
 \end{array} \right )
 \begin{pmatrix}
{\mathbf v} \\
 v 
\end{pmatrix},
\] 
and $\mathcal{R}_N$ and $\mathcal{R}^*_N$ are boundary operators defined by 
\begin{align*} 
\mathcal{R}_N(\partial_x , s) \begin{pmatrix}
{\mathbf u} \\
{\theta}\\
\end{pmatrix}\Big |_{\Gamma} 
&:= \left (
\begin{array} {ll} 
 \mathbf{T} & -\gamma\, \mathbf{n} \\ 
  0 & ~~\partial_n \\
 \end{array} \right )
  \begin{pmatrix}
{\mathbf u} \\
\theta 
\end{pmatrix}\Big |_{\, \Gamma} \\
\mathcal{R}^*_N(\partial_x , s) \begin{pmatrix}
{\mathbf v} \\
{ v}\\
\end{pmatrix}\Big |_{\Gamma} 
&:=\left (
\begin{array} {ll} 
  \mathbf{T} & s\, \eta\, \mathbf{n} \\ 
  0 & ~~ \partial_n\\
 \end{array} \right )
 \begin{pmatrix}
{\mathbf v} \\
 v 
\end{pmatrix}\Big |_{\Gamma}
\end{align*} 
where $\mathbf{T}$ is the elastic normal traction operator defined by
\[
T \mathbf{u}_{|\, \Gamma} : = \widetilde{\bm \sigma} (\mathbf{u})_{ |\, {\Gamma}} \;\mathbf{n} = \Big( \lambda (\nabla\cdot\mathbf{u} )\mathbf{n} + 2 \mu \frac{\partial \mathbf{u}} {\partial n} + \mu \mathbf{n} \times \nabla \mathbf{u}\Big)\Big|_{\, \Gamma}. 
\]
From the first Green formula \eqref{eq:3.1}, it follows that by restricting the test pair $(\mathbf{v},  v)^{\top} $ to various subspaces, we may formulate four basic boundary value problems for the 
partial differential equation \eqref{eq:2.5} in $\Omega^-$ (or $ \Omega^+$) according to the following prescribed boundary conditions, namely, 

\begin{description}
\item{\bf I.} The Dirichlet-type boundary condition: 
\begin{align*} 
\mathcal{R}_D \begin{pmatrix}
{\mathbf u} \\
{\theta}\\
\end{pmatrix}\Big |_{\Gamma} 
&:= \left (
\begin{array} {ll}
 \mathbf{I}_3 & 0\\
  0 & 1\\
 \end{array} \right )
 \begin{pmatrix}
{\mathbf u} \\
\theta
\end{pmatrix} \Big |_{\Gamma} = \begin{pmatrix}
{\mathbf f} \\
f 
\end{pmatrix} \quad \mbox{on} \quad \Gamma. 
\end{align*}
\item {{\bf II.}} The Neumann-type boundary condition: 
\begin{align*} 
\mathcal{R}_N \begin{pmatrix}
{\mathbf u} \\
{\theta}\\
\end{pmatrix}\Big |_{\Gamma} 
&:= \left (
\begin{array} {ll}
 \mathbf{T} & -\gamma\, \mathbf{n} \\ 
  0 & ~~\partial_n \\
 \end{array} \right )
  \begin{pmatrix}
{\mathbf u} \\
\theta 
\end{pmatrix}\Big |_{\Gamma} 
= \begin{pmatrix} 
 \mathbf{g} \\
g
\end{pmatrix} \quad \mbox{on} \quad \Gamma.
\end{align*}
\item{{\bf III.}} The Dirichlet and Neumann mixed-type boundary condition:
\begin{align*}
\mathcal{R}_{DN} \begin{pmatrix}
{\mathbf u} \\
{\theta}\\
\end{pmatrix}\Big |_{\Gamma} 
&:= \left (
\begin{array} {ll} 
 \mathbf{I}_3 & 0 \\
  0 & \partial_n \\
 \end{array} \right )
 \begin{pmatrix}
{\mathbf u} \\
\theta 
\end{pmatrix}\Big |_{\Gamma} = \begin{pmatrix} \mathbf{f}\\
g
\end{pmatrix}\quad \mbox{on}\quad \Gamma. 
\end{align*}
\item{{\bf IV.}} The Neumann and Dirichlet mixed-type boundary condition:
\begin{align}
 \mathcal{R}_{N D} \begin{pmatrix}
{\mathbf u} \\
{\theta}\\
\end{pmatrix}\Big |_{\Gamma} 
&:=\left (
\begin{array} {ll} 
 \mathbf{T}& -\gamma\, \mathbf{n} \\ 
  0 & ~~ -1 \\
 \end{array} \right )
  \begin{pmatrix}
{\mathbf u} \\
\theta 
\end{pmatrix}\Big |_{\Gamma} 
 = \begin{pmatrix}
 \mathbf{g} \\
- f 
\end{pmatrix}\quad \mbox{on} \quad\Gamma. \label{eq:3.11}
\end{align}
\end{description}

Above, $\mathbf{f}, f $ and $ \mathbf{g}, g $ are given Dirichlet and Neumann boundary data satisfying certain regularity conditions to be specified. In the definition for the boundary condition for $\theta$ in {\bf IV}, we have tacitly employed the negative Dirichlet condition instead (see Kupradze \cite[p.538]{Ku:1979} for an obvious reason to be clear).

The second Green formula \eqref{eq:3.2} gives a representation for the solution of eq.\eqref{eq:2.5}, if $({\bf v}, v)$ is replaced by the fundamental solution of the adjoint partial differential operator $\mathbf{B}^*(\partial_y, s)$. In Appendix A, we have derived the fundamental solution $ \underline {\underline{\bf E}}(x,y;s)$ for the operator $\mathbf{B}(\partial_x, s)$ based on H\"{o}rmander's approach \cite{Ho:1964} (see also Kupradze \cite{Ku:1979}). In terms of $ \underline {\underline{\bf E}}(x,y;s)$, we have the representation of $({\mathbf u}, \theta)^{\top}$ in the form: 
\begin{align}
 \begin{pmatrix}
{\mathbf u} \\
\theta 
\end{pmatrix} (x)
& = \int_{\Gamma} \underline {\underline{\bf E}}(x,y;s)\, {\mathcal R}_N \begin{pmatrix}
{\mathbf u} \\
\theta 
 \end{pmatrix} d_y\Gamma - \int_{\Gamma} \Big( R^*_y ~\underline {\underline{\bf E}}^{\top}(x,y;s) \Big)^{\top} \begin{pmatrix}
{\mathbf u} \\
\theta 
\end{pmatrix}
d_y\Gamma , \quad x \in \Omega^-, \label{eq:3.12} 
\end{align}
which will lead to the direct method for reducing the Dirichlet and the Neumann problems (i.e., Problem (I) and Problem (II)) to boundary integral equations on $\Gamma$. In this cases, the solutions of the integral equations are explicitly expressed in terms of solutions of the boundary value problems (i.e. their restrictions to the boundary and their outer normal derivatives). However, for the mixed type of the Dirichlet and the Neumann problems (Problem (III) and Problem (IV)), the direct representations leading to the boundary integral equations are not so straightforward. On the other hand, the indirect method based on the layer Ansatz can be easily applied to all of the the four boundary value problems without any difficulty. For this purpose, we introduce the four basic thermoelastic-layer potentials. We begin with the familiar simple- (or single-) and double-layer potentials from (\ref{eq:3.12}):
\begin{align*}
 \mathcal{S} (s)~\begin{pmatrix} {\bm{\lambda}}\\
\varsigma
\end{pmatrix} (x) &:= \int_{\Gamma} \underline {\underline{\bf E} } (x,y;s)~\begin{pmatrix} 
\bm{\lambda}\\
\varsigma
\end{pmatrix} (y)\, d_y{\Gamma}\quad x \not \in \Gamma,\\
 \mathcal{D} (s)~\begin{pmatrix}{\bm{\phi}}\\
\varphi
\end{pmatrix} (x) &:= \int_{\Gamma} \Big( \mathcal{R}^{*}_{N_y} ~\underline {\underline{\bf E}}^{\top}(x,y;s)\Big)^{\top}
\begin{pmatrix} \bm{\phi} \\
\varphi 
\end{pmatrix} (y) \,d_y{\Gamma} \quad x \not \in \Gamma. 
\end{align*}
 We also need two additional thermoelastic-layer potentials 
\begin{align}
 \mathcal{Q}_{DN}(s)~\begin{pmatrix} {\bm{\lambda}}\\
\varphi
\end{pmatrix} (x) &:= \int_{\Gamma} \Big( \mathcal{R}^{*}_{DN_y} \underline {\underline{\bf E}}^{\top}(x,y;s) \Big)^{\top}
\begin{pmatrix} 
\bm{\lambda}\\
\varphi
\end{pmatrix} (y)\, d_y{\Gamma} \, \quad x \not \in \Gamma, \label{eq:3.15}\\
 \mathcal{Q}_{ND}(s)~\begin{pmatrix} {\bm{\phi}}\\
\varsigma
\end{pmatrix} (x) &:= \int_{\Gamma} \Big( \mathcal{R}^{*}_{ND_y} \underline {\underline{\bf E}}^{\top}(x,y;s) \Big)^{\top}
\begin{pmatrix} 
\bm{\phi}\\
\varsigma
\end{pmatrix} (y)\, d_y{\Gamma} \, \quad x \not \in \Gamma. \label{eq:3.16}
\end{align}
In the above definitions, ${\bm \lambda}, \varphi , {\bm \phi} $ and $\varsigma$ are unknown density functions satisfying certain regularity properties to be specified. Here the adjoint operators $ \mathcal{R}^{*}_{DN_y} $ and $ \mathcal{R}^{*}_{ND_y} $ are, respectively, defined by 
\begin{align*}
\mathcal{R}^{*}_{DN_y} = \mathcal{R}_{DN_y} &:= \left (
\begin{array} {ll} 
 \mathbf{I}_3 & 0 \\ 
  0 & \partial_n \\
 \end{array} \right ), \quad \mathcal{R}^{*}_{N D_y} :=\left (
\begin{array} {ll}
 \mathbf{T}& \; s\, \eta \, \mathbf{n} \\  
  0 & ~~ -1 \\
 \end{array} \right ).
 \end{align*} 
We note that the simple-layer potential can be rewritten in the from: 
\begin{align*}
 \mathcal{S} (s)~\begin{pmatrix} {\bm{\lambda}}\\
\varsigma
\end{pmatrix} (x) &:= \int_{\Gamma} \Big( \mathcal{R}^{*}_{D_y}\underline {\underline{\bf E} }^{\top} (x,y;s) \Big)^{\top}~\begin{pmatrix} 
\bm{\lambda}\\
\varsigma
\end{pmatrix} (y)\, d_y{\Gamma}, \, \quad x \not \in \Gamma, 
\end{align*}
since $\mathcal{R}^{*}_{D_y} = \mathcal{R}_{D} = \mathbf{I}_4 $. This clearly suggests that the kernels of the mixed potential defined in \eqref{eq:3.15} and in \eqref{eq:3.16} should be a combination of the kernels of the simple- and double potentials. We have deliberately denoted the density functions for the mixed potentials according to a corresponding combination of $\bm \lambda, \varsigma $ and $\bm \phi, \varphi $. For the reduction of the four basic boundary value problems for equation \eqref{eq:2.5} to boundary integral equations, simple- and double- layer potential are employed for the problems with boundary conditions {\bf I} and {\bf II}, while the mixed-typed potentials will be utilized for the problems with boundary conditions {\bf III} and {\bf IV}. 
 
To apply the TDBIMs to initial boundary value problems for the dynamic equations in thermoelasicity, we need the weak solutions of the four boundary value problems in terms of density functions of the corresponding potentials for {\eqref{eq:2.5}. In this paper, we will confine ourselves only to Dirichlet and Neumann problems via simple- and double- layer potentials $ \mathcal{S} (s)$ and $ \mathcal{D}(s) $. For the mixed Dirichlet and Neumann - type problems via the mixed type potentials $ \mathcal{Q}_{DN}(s)$ and $ \mathcal{Q}_{ND} (s) $, we will pursue our investigations in a separate communication. 

In the following we collect some basic properties of the four aforementioned thermo-elastic potentials as well as solutions of the interior and exterior four boundary value problems represented in terms of these potentials. To simplify the presentation, we need to introduce some further notation: If the function $\bm \varphi$, defined in $\Omega^- \;( \text{or}\; \Omega^+:= \mathbb{R}^3 \setminus \overline{\Omega^-} ) $, is continuously extendible at a point $y \in \Gamma$, then $\bm \varphi^-(y)$ and $\bm\varphi^+(y)$ will denote the limits
\[
\bm\varphi^-(y) \equiv \left\{ \bm\varphi(y)\right\}^-:= \lim_{ \Omega^- \ni x \to y \in \Gamma}\, \bm\varphi(x), \qquad \mbox{and} \qquad \bm\varphi^+(y) \equiv \left\{ \bm\varphi(y)\right\}^+:= \lim_{ \Omega^+ \ni x \to y \in \Gamma}\, \bm\varphi(x).
\]
We also adopt the following conventions: for any function $v$ defined on both sides of $\Gamma$ its jump across the boundary $\Gamma$ will be denoted by $\jump{v} := ( v ^- -  v^+) $, while the average of its traces from inside and outside of $\Gamma$ will be denoted by $ \ave{v } := ( v^- + v^+ ) /2$. For vector valued functions, the jump and average are defined component wise in a similar fashion.

We now summarize some of the classical available results according to [3.1 Theorem, p.572 ]\cite{Ku:1979} in the following theorem. We will use familiar notation  from potential theory (see e.g. the monograph \cite{HsWe:2008}), to refer to the boundary integral operators. 
\begin{theorem} If $\mathrm{Re}(s) \geq \sigma_0 > 0$, then interior and exterior BVPs for equation \eqref{eq:2.5} with boundary conditions {\bf I}, {\bf II}, {\bf III}, {\bf IV} are solvable. The solutions are unique and can be represented by the appropriate thermoelastic potentials. The density functions of the integral representation will then be the unique solutions of Boundary Integral Equations (BIE) involving Boundary Integral Operators (BIO) with Cauchy singular kernel. More precisely, we have the formulations 
\begin{align*}
\intertext{ For Problem {\em(I)}, }
\mbox{ \em Rep : }&({\bf u}, \theta)^{\top} =  \mathcal{D} (s) ({\bm \phi}, \varphi)^{\top}\; \mbox{in} \; \;\Omega^-\; (or\; \Omega^+),\quad \left(\mathcal{R}_D({\bf u}, \theta)^{\top}\right)^{\mp} = ({\bf f},f)^{\top} \quad \mbox{on} \quad \Gamma,\\
\mbox{\em BIE : }&\Big( \mp \tfrac{1}{2}\mathcal{ I} + \mathcal{K}(s) \Big) ({\bm \phi}, \varphi )^{\top} = ({\bf f}, f) ^{\top} \quad \mbox{on}\quad \Gamma, \\
\mbox{\em BIO : }&\mathcal{K}(s) ({\bm \phi}, \varphi)^{\top} := \ave{\int_{\Gamma} \Big(
\mathcal{R}^{*}_{N_y}\underline {\underline{\bf E} }^{\top} (x,y;s)\Big)^{\top} ({\bm \phi}, \varphi)^{\top}d\Gamma_y}; 
\intertext{ For Problem {\em (II)}, } 
\mbox{ \em Rep : }&({\bf u}, \theta)^{\top} =  \mathcal{S} (s) ({\bm \lambda}, \varsigma)^{\top}\; \mbox{in} \; \;\Omega^-\; (or\; \Omega^+),\quad \left(\mathcal{R}_N ({\bf u}, \theta)^{\top}\right)^{\mp} = ({\bf g} , g)^{\top}\; \mbox{on}\quad \Gamma,\\
\hspace{-8mm}\mbox{\em BIE : }&\Big( \pm \tfrac{1}{2}\mathcal{ I} + \mathcal{K}^\prime(s) \Big) ({\bm \lambda}, \varsigma )^{\top} = ({\bf g}, g) ^{\top} \;\;\mbox{on}\quad \Gamma, \\
\hspace{-8mm}\mbox{\em BIO : }&\mathcal{K}^{\prime}(s)({\bm \lambda}, \varsigma)^{\top} := \ave{ \int_{\Gamma} \Big(
\mathcal{R}_{N_x}\underline {\underline{\bf E} }(x,y;s)\Big)({\bm \lambda}, \varsigma)^{\top}d\Gamma_y} ;
\intertext{For Problem {\em (III)}, }
\mbox{ \em Rep : }&({\bf u}, \theta)^{\top} =  \mathcal{Q}_{ND} (s) ({\bm \phi}, \varsigma)^{\top}\; \mbox{in} \; \;\Omega^-\; (or\; \Omega^+),\quad \left(\mathcal{R}_{DN}({\bf u}, \theta)^{\top}\right)^{\mp} = ({\bf f} , g)^{\top}\; \mbox{on}\quad \Gamma,\\
\hspace{-4mm}\mbox{\em BIE : }&\Big( \mp \tfrac{1}{2}\mathcal{ I} + \mathcal{K}_{DN}(s) \Big) ({\bm \phi}, \varsigma )^{\top} = ({\bf f}, g) ^{\top} \;\;\mbox{on}\quad \Gamma, \\
\hspace{-4mm}\hspace{-4mm}\mbox{\em BIO : }&\mathcal{K}_{DN} (s) ({\bm \phi}, \varsigma)^{\top} := \ave{ \int_{\Gamma} 
\mathcal{R}_{DN_x} \Big( {\mathcal R}^{*}_{ND_y} \underline {\underline{\bf E} }^{\top} (x,y;s) \Big)^{\top}
({\bm \phi}, \varsigma)^{\top} d\Gamma_y} ;
\intertext{For Problem {\em (IV)}, }
\mbox{ \em Rep : }&({\bf u}, \theta)^{\top} =  \mathcal{Q}_{DN} (s) ({\bm \lambda}, \varphi)^{\top}\; \mbox{in} \; \;\Omega^-\; (or\; \Omega^+),\quad \left(\mathcal{R}_{ND}({\bf u}, \theta)^{\top}\right)^{\mp} = ({\bf g} , -f)^{\top}\; \mbox{on}\quad \Gamma\\
\mbox{\em BIE : }&\Big( \pm \tfrac{1}{2}\mathcal{ I} + \mathcal{K}_{ND}(s) \Big) ({\bm \lambda}, \varphi )^{\top} = ({\bf g},- f) ^{\top} \;\;\mbox{on}\quad \Gamma \\
\mbox{\em BIO : }&\mathcal{K}_{ND} (s) ({\bm \lambda}, \varphi)^{\top} := \ave{\int_{\Gamma} 
\mathcal{R}_{ND_x} \Big( {\mathcal R}^{*}_{DN_y} \underline {\underline{\bf E} }^{\top} (x,y;s) \Big)^{\top}
({\bm \lambda}, \varphi)^{\top} d\Gamma_y} ;
\end{align*}
In the above formulations, all the given boundary data $ {\bf f}, {\bf g}, f $ and $g $ 
as well as the unknown density functions ${\bm \phi}, {\bm \lambda}, \varphi $ and $ \varsigma$ are assumed to be in the classical H\H{o}lder function spaces ${\bf C}^{\alpha}(\Gamma)$ or $ C^{\alpha}(\Gamma)\;(0 < \alpha < 1) $, while the boundary $\Gamma$ is required to be of class $C^{1 + \alpha} $.
\end{theorem}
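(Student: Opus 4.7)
The plan is to follow the classical route of Kupradze \cite{Ku:1979}: reduce each of the four BVPs to a second-kind Fredholm boundary integral equation via the jump relations of the corresponding potential, prove uniqueness of the BVP by an energy identity attached to the first Green formula \eqref{eq:3.1}, and then apply the Fredholm alternative to obtain unique solvability of the BIE, from which the solution of the BVP is recovered by the representation formula. In each of the four cases the work thus splits into (i) an \emph{a priori} uniqueness estimate, (ii) a trace-jump calculation that turns the ansatz into the stated BIE, and (iii) a Fredholm analysis of the boundary integral operator on $\mathbf C^\alpha(\Gamma)\times C^\alpha(\Gamma)$.

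For uniqueness I would start with a regular homogeneous solution $(\mathbf u,\theta)$ whose boundary data vanish, test the first equation of \eqref{eq:2.5} against $\bar s\,\bar{\mathbf u}$ and the second against $(\gamma/\eta)\bar\theta$, integrate over $\Omega^-$ and apply Gauss's theorem. The two coupling contributions combine, after an integration by parts, into $\gamma\bar s A-\gamma s\bar A=2i\gamma\,\mathrm{Im}(\bar s A)$ with $A:=\int_{\Omega^-}\theta\,\nabla\cdot\bar{\mathbf u}\,dx$, hence are purely imaginary. All four homogeneous boundary conditions \textbf{I}--\textbf{IV} kill the surface integrals, so on taking real parts one is left with
\[
\mathrm{Re}(s)\!\!\int_{\Omega^-}\!\!\bigl[\,\bm\sigma(\mathbf u){:}\bm\varepsilon(\bar{\mathbf u})+\rho|s|^2|\mathbf u|^2+\tfrac{\gamma}{\eta\kappa}|\theta|^2\bigr]dx+\tfrac{\gamma}{\eta}\!\!\int_{\Omega^-}\!\!|\nabla\theta|^2\,dx=0.
\]
Under the assumed sign conditions ($\mu>0$, $3\lambda+2\mu>0$, $\rho,\kappa>0$, $\gamma/\eta>0$) and $\mathrm{Re}(s)\ge\sigma_0>0$ every term is nonnegative, forcing $(\mathbf u,\theta)\equiv(\mathbf 0,0)$. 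For the exterior problem the same identity on $\Omega^+\cap B_R$ applies; the surface integral on $\partial B_R$ is discarded as $R\to\infty$ using the exponential decay $e^{-\mathrm{Re}(\lambda_j)r}/r$ of the entries of $\underline{\underline{\mathbf E}}(x,y;s)$ built in Appendix~A, which is guaranteed by $\mathrm{Re}(\lambda_j)>0$ for $\mathrm{Re}(s)>0$.

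Next I would read off the jump relations of $\mathcal S(s),\mathcal D(s),\mathcal Q_{DN}(s),\mathcal Q_{ND}(s)$ from the Yukawa-type structure $e^{-\lambda_j|x-y|}/|x-y|$ of each entry of $\underline{\underline{\mathbf E}}$, using the standard single-/double-layer arguments of classical potential theory: $\mathcal S(s)$ is continuous across $\Gamma$ with the usual conormal jump, $\mathcal D(s)$ carries the Dirichlet jump $\mp\tfrac12$, and the two mixed-type potentials inherit the appropriate combination. Plugging each ansatz into the corresponding boundary operator and matching the given data produces exactly the four BIEs of the statement, with the $\mp$/$\pm$ signs dictated by whether the trace is taken from $\Omega^-$ or $\Omega^+$.

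The main obstacle is the Fredholm analysis of the four BIOs. Because $\Gamma\in C^{1+\alpha}$ and because each entry of $\underline{\underline{\mathbf E}}(x,y;s)$ differs from its static counterpart only by smoother $s$-dependent correctors, every kernel decomposes as a Cauchy-singular principal part (whose H\"older mapping properties on $C^{1+\alpha}$ surfaces are classical) plus a weakly singular remainder that is compact on $\mathbf C^\alpha(\Gamma)\times C^\alpha(\Gamma)$. The Cauchy-singular part coincides with that of a pure thermoelastic single-layer operator and contributes no jump under the trace, so the full operators $\mathcal K(s),\mathcal K'(s),\mathcal K_{DN}(s),\mathcal K_{ND}(s)$ end up as compact perturbations of $\pm\tfrac12\mathcal I$ and are therefore Fredholm of index zero. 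Since the uniqueness step rules out any nontrivial element of each kernel, the Fredholm alternative yields unique H\"older solvability of every BIE, and the representation formulas then deliver the unique BVP solutions.
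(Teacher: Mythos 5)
The paper does not actually prove this theorem; it is presented as a summary of classical results and the proof is delegated to Kupradze \cite[Theorem 3.1, p.~572]{Ku:1979}. Your outline follows that classical route (uniqueness by an energy identity from the first Green formula, jump relations of the four potentials, reduction to second-kind equations), and your energy identity, with the weights $\bar s$ and $\gamma/\eta$ mirroring the re-scaling $Z(s)$ used later in the paper, is sound for all four homogeneous boundary conditions. However, there is a genuine gap in your Fredholm step. You claim that each boundary operator is a compact perturbation of $\pm\tfrac12\mathcal I$ on $\mathbf C^\alpha(\Gamma)\times C^\alpha(\Gamma)$ because the Cauchy-singular part ``contributes no jump under the trace.'' This conflates two unrelated things: whether a kernel contributes to the jump has nothing to do with compactness of the induced boundary operator. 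For elasticity-type systems the double-layer kernel $\bigl(\mathcal R^*_{N_y}\underline{\underline{\bf E}}^{\top}\bigr)^{\top}$ and its adjoint (the traction of the single layer appearing in $\mathcal K'(s)$) behave like $|x-y|^{-2}$ on the two-dimensional surface $\Gamma$ with a non-vanishing odd principal part; they are genuinely Cauchy singular --- exactly as the theorem statement emphasizes --- and the corresponding operators $\mathcal K(s)$, $\mathcal K'(s)$, $\mathcal K_{DN}(s)$, $\mathcal K_{ND}(s)$ are \emph{not} compact on H\"older spaces, nor does their singular part coincide with the (weakly singular) single-layer boundary operator. Consequently the Riesz--Fredholm theory of compact operators does not apply; one must instead invoke the theory of multidimensional singular integral equations (computation of the symbol, verification that the operators are of normal type with index zero), which is precisely the machinery Kupradze develops before stating his Theorem 3.1.

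A secondary, smaller gap: even once index zero is available, injectivity of each second-kind operator does not follow directly from your a priori uniqueness estimate for the BVP. The standard argument is two-sided: if, say, $(\mp\tfrac12\mathcal I+\mathcal K(s))(\bm\phi,\varphi)=0$, the double-layer potential solves the homogeneous Dirichlet problem on one side and vanishes there; the jump relations then transfer vanishing Neumann-type data to the complementary domain, where uniqueness of the \emph{other} problem forces the potential to vanish as well, and only then does the Dirichlet jump give $(\bm\phi,\varphi)=0$. So you need uniqueness for both the interior and the exterior problems of the complementary types, together with the jump relations of the potentials, and for Problems III and IV this argument also relies on the sign convention in $\mathcal R_{ND}$ (the $-1$ entry) that the paper singles out in its Remark; your proposal does not carry out any of this. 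Your exterior uniqueness sketch (decay inherited from the Yukawa-type structure of $\underline{\underline{\bf E}}$) is in the right spirit but should be phrased as decay of the regular solution via its Green representation, not merely decay of the fundamental solution.
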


\begin{rem} A few direct observations and constructive comments on the above theorem should be in order. (a) Problems (III) and (IV) show that it is necessary to define the boundary operator $\mathcal{R}_{ND}$ in eq. \eqref{eq:3.11} with -1, not +1, for the coefficient of $\theta$. Otherwise, the jumps of both density functions $(\bm{\phi}, \varsigma) $ from the boundary integral equation in Problem (III) will not be the same. The same conclusion holds also for the Problem (IV). (b) It is worth mentioning that in contrast to time-harmonic thermoelastic (and elastic) oscillations, no radiation conditions are required for exterior boundary value problems in the thermoelastic pseudo-oscillations. This perhaps can be explained from the expansion of a regular solution of \eqref{eq:2.5}, which is in terms of solutions of Yukawa potential equations, while the former expansions are in terms of solutions of Helmholtz equations. (c) All of four boundary value problems are reduced to boundary integral equations of the second kind from which the density functions can be computed via jumps of solutions of the corresponding boundary values : Problems (I): $\jump{\mathcal{R}_D({\bf u}, \theta)^{\top}} = - (\bm{\phi}, \varphi)^{\top}$; Problem (II): $\jump{\mathcal{R}_N(\mathbf{u}, \theta)^{\top}} = +(\bm{\lambda}, \varsigma)^{\top} $; Problem(III): $ \jump{\mathcal{R}_{DN}(\mathbf{u}, \theta)^{\top}} = - ( {\bm \phi}, \varsigma)^{\top} $; Problem(IV): $ \jump{\mathcal{R}_{ND}(\mathbf{u}, \theta)^{\top}} = + ( {\bm \lambda}, \varphi)^{\top} $. (d) The main drawback of Kupradze's approach is that it concerns only with the traditional second kind boundary integral equation formulations in H\"{o}lder function spaces. This will exclude many important practical applications. 
\end{rem} 

In the next section, we will re-examine Problem (I) and Problem (II) (the Dirichlet and the Neumann problems) in Sobolev spaces. We will show how to apply the method of boundary integral equations of the first kind to both problems. In particular we need sharp estimates on the s-parameter dependence of solutions in the Laplace transformed domain. This information is crucial in order to apply TDBIMs to the problems in Section \ref{sec:TimeDomain}. It is not difficulty to see that mixed Dirichlet and Neumann Problem (III) and problem(IV) can be treated in the same manner and we will report our investigation for the mixed problems 
in a separate communication. 
%
\section{Dirichlet and Neumann Boundary Value Problems} \label{sec:BoundaryValueProblems}
%
We begin with the definitions of of the following energy norms in the domain $D= \Omega^-$ or $\Omega^+$: 
\begin{alignat*}{6}
\triple{\mathbf u}_{|s|, D}^2 &:= \left( \widetilde{\bm{ \sigma}} ( {\mathbf u}), \overline{\widetilde{\bm{\varepsilon}} (\mathbf {u}} ) \right)_{D} +  \rho \| s \; \mathbf u \|^2_{D} \quad& \mathbf u &\in {\mathbf H}^1(D), \\
 \triple{\theta}^2_{|s|, D} &:= \| \nabla \theta\|^2_{D} + \kappa^{-1}
 \| \sqrt{ |s|} \; \theta \|_{D}^2 \quad& \theta &\in H^1(D), \\
 \triple{ ({\bf u}, \theta)}^2_{|s|, D} &:= \triple{\mathbf u}^2_{|s|, D} + \triple{\theta}^2_{|s|, D}
 \quad & ({\bf u}, \theta) &\in {\mathbf H}^1(D)\times H^1(D). 
 \end{alignat*}
(see, e.g. \cite{HsSaSa:2016, HsSaSaWe:2016}). In the sequel, it will be convenient to use the following notation for the real part of the Laplace parameter
\[
\sigma : = \mathrm{Re}(s) \qquad\text{ and }\qquad \underline{\sigma}:= \min\{1, \sigma\}.
\]
We also need the following result that establishes the equivalence between the above energy norms and the usual Sobolev norm in $D$. 
 \begin{align}
\underline{\sigma }\triple{\mathbf{u}}_{1, D} &\leq \triple{\mathbf{u}}_{|s|, D} \leq
\frac{|s|}{\underline{\sigma}} \triple{\mathbf{u}}_{1, D} ,\nonumber\\
\sqrt{\underline{\sigma}} \triple{\theta}_{1, D} &\leq \triple{\theta}_{|s|, D} \leq 
\sqrt{\frac{|s|}{\underline {\sigma} } } \triple{\theta}_{1,D}, \nonumber \\
\underline{\sigma} \triple{({\bf u}, \theta)}_{1, D} &\leq \triple{({\bf u}, \theta)}_{|s|, D} \leq \frac{|s|}{{\underline{\sigma}}^{3/2}}\triple{({\bf u}, \theta)}_{1, D} , \label{eq:4.6}
 \end{align}
which can be obtained from the inequalities: 
 \[
\underline{\sigma} \leq \min\{1, |s|\},\quad \mbox{and} \quad \underline{\sigma}\,\max\{1, |s|\}
 \leq |s|,~ \;\forall s \in \mathbb{C}_+.
 \]
We remark that the norm $\triple{\theta}_{1, \Omega^-} $ is equivalent to $\|\theta\|_{H^1(\Omega^-)} $ and so is the energy norm $\triple{ \mathbf{u}}_{1, \Omega^-}$ equivalent to the $\mathbf{H}^1(\Omega^-)$-norm of ${ \mathbf{u}} $ by the second Korn inequality \cite{Fi:1972}. In what follows, constants ${c_j}'s $ are generic constants independent of $s$ which may or not not be the same at different places. 
 
For a Lipschitz boundary $\Gamma \in C^{0,1} $, lets recall the definition of simple- (or single-) 
and double-layer potentials: 
\begin{align*}
 \mathcal{S} (s) \, (\bm{\lambda},\, \varsigma)& : 
 =\int_{\Gamma} \underline {\underline{ \bf E}}(x,y;s)(\bm{\lambda}, \varsigma)\, d{\Gamma}_y, \quad x \not \in \Gamma \\
 \mathcal{D} (s) \, ({\bm{\phi}}, \, \varphi)&: 
=\int_{\Gamma} \Big( R^*_{N_y} ~\underline {\underline{\bf E}}^{\top}(x,y;s) \Big)^{\top}(\bm{\phi},
\varphi ) \, d{\Gamma}_y , \quad x \not \in \Gamma ,
\end{align*} 
for $(\bm{\lambda}, \varsigma)\in {\bf H}^{-1/2}(\Gamma) \times H^{-1/2} (\Gamma)$ and $ ({\bm \phi}, \varphi) \in {\bf H}^{1/2}(\Gamma) \times H^{1/2} (\Gamma)$. Here and in the sequel, for simplicity, we suppress the symbol $\top$ and write $({\bm \lambda}, \varsigma) , ({\bm \phi}, \varphi) $ instead of $( {\bm \lambda}, \varsigma)^{\top}, ({\bm \phi}, \varphi)^{\top} $, etc. Now, following the standard procedure in potential theory associated to the potentials (see, e.g. Hsiao and Wendland \cite [Chapter 5, 8]{HsWe:2008})  we consider the following transmission problem for the simple-layer potential $  \mathcal{S} (s) $.

\begin{proposition} \label{pro:4.1}
Let $ ({\bm{\lambda}}, \varsigma) \in {\bf H}^{-1/2}(\Gamma) \times H^{-1/2}(\Gamma) $ and consider the function $({\bf u}_{\lambda}, \theta_{\varsigma}) :=  \mathcal{S} (s)(\boldsymbol{\lambda}, 
 \varsigma) \in {\bf H}^1(\mathbb{R}^3\setminus \Gamma) \times H^1(\mathbb{R}^3\setminus \Gamma)$. Then 
\begin{subequations} \label{eq:4.9}
\begin{align}
\mathbf{B}(\partial_x , s) (\mathbf u_{\lambda}, {\theta}_{\varsigma}) 
& = {\bf 0} \quad \mbox{in}\quad \mathbb{R}^3 \setminus \Gamma, \label{eq:4.9a}\\
\jump{ ({\bf u}_{\lambda}, \theta_{\varsigma}) } & = {\bf 0},\label{eq:4.9b}\\
\jump{ \mathcal{R}_N ({ \bf u}_{\lambda}, \theta_{\varsigma})} & = (\boldsymbol{\lambda}, \varsigma). \label{eq:4.9c}
\end{align}
\end{subequations}
Moreover, a pair $({\bf u}_{\lambda}, \theta_{\varsigma}) \in {\bf H}^1(\mathbb{R}^3\setminus \Gamma) \times H^1(\mathbb{R}^3\setminus \Gamma) $ is a solution of {\em(}\ref{eq:4.9}{\em )} if and only if 
$ ({\mathbf u}_{\lambda}, \theta_{\varsigma})$ are such that for all $( {\bf v},  v) \in {\bf H}^1(\mathbb{R}^3\setminus \Gamma) \times H^1(\mathbb{R}^3\setminus \Gamma)$ with $\jump{({\bf v},  v)} ={\bf 0}$, 
\begin{equation}\label{eq:4.10} 
\mathcal{A} _{ \mathbb{R}^3 } \left( ( {\bf u}_{\lambda}, \theta_{\varsigma} ), ( {\bf v},  v) \right) =\left\langle \jump{\mathcal{R}_N ({ \bf u}_{\lambda}, \theta_{\varsigma})}, \overline{({\bf v},  v)}|_{\Gamma} \right\rangle_{\Gamma}=
\left\langle ( \boldsymbol{\lambda}, \varsigma ), \overline{({\bf v},  v)}|_{\Gamma} \right\rangle_{\Gamma},
\end{equation}
where $({\bf v},  v)|_{\Gamma}= \left\{({\bf v},  v)\right\}^{\pm}.$
\end{proposition}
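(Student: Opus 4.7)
\bigskip

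\noindent\textbf{Proof plan.} The strategy is to establish the strong-form assertions (\ref{eq:4.9a})--(\ref{eq:4.9c}) directly from the defining properties of the fundamental solution and classical jump relations for layer potentials, and then to obtain the variational equivalence (\ref{eq:4.10}) by a two-sided application of the first Green's identity (\ref{eq:3.1}). The proof splits naturally into two halves: verifying that $\mathcal{S}(s)(\boldsymbol{\lambda},\varsigma)$ solves (\ref{eq:4.9}), and proving the equivalence with (\ref{eq:4.10}).

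\smallskip

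\noindent\emph{Step 1: verification of (\ref{eq:4.9a})--(\ref{eq:4.9c}).} The differential identity (\ref{eq:4.9a}) is immediate from the construction of $\underline{\underline{\mathbf{E}}}$ in Appendix A: for any fixed $x\notin\Gamma$ and $y\in\Gamma$, we have $\mathbf{B}(\partial_x,s)\underline{\underline{\mathbf{E}}}(x,y;s)=\mathbf{0}$, so interchanging $\mathbf{B}(\partial_x,s)$ with the surface integral defining $\mathcal{S}(s)$ (valid since $x$ stays bounded away from the integration set) yields the result. For (\ref{eq:4.9b}) I would invoke the weak ($|x-y|^{-1}$-type) singularity of $\underline{\underline{\mathbf{E}}}$ established in the appendix, which gives the classical continuity of the simple-layer trace across $\Gamma$; this is an entirely local argument using the same parametrix decomposition as for the Lam\'e and Yukawa simple-layer kernels, since the thermoelastic fundamental solution differs from these by smoother terms. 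Relation (\ref{eq:4.9c}) is the key jump identity and I expect it to be the main technical obstacle. The cleanest route is to combine the classical jump of the elastic traction on the simple-layer elastic potential with the classical jump of the normal derivative on the Yukawa simple layer, applied componentwise to the block structure of $\mathcal{R}_N$; the off-diagonal coupling term $-\gamma\,\mathbf{n}$ only multiplies the trace of $\theta_{\varsigma}$, which is already continuous by (\ref{eq:4.9b}) and therefore contributes no jump. Assembling these pieces yields $\jump{\mathcal{R}_N(\mathbf{u}_{\lambda},\theta_{\varsigma})}=(\boldsymbol{\lambda},\varsigma)$.

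\smallskip

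\noindent\emph{Step 2: the direction (\ref{eq:4.9})} $\Longrightarrow$ (\ref{eq:4.10}). Given a pair satisfying (\ref{eq:4.9}), apply the first Green's formula (\ref{eq:3.1}) on $\Omega^-$ and then on $\Omega^+$ with the same test pair $(\mathbf{v},v)$. In each subdomain, (\ref{eq:4.9a}) kills the volume contribution of $\mathbf{B}(\mathbf{u}_{\lambda},\theta_{\varsigma})$, so we are left with the two identities
\[
\mathcal{A}_{\Omega^{\pm}}\!\bigl((\mathbf{u}_{\lambda},\theta_{\varsigma}),(\mathbf{v},v)\bigr)=\pm\,\bigl\langle \mathcal{R}_N(\mathbf{u}_{\lambda},\theta_{\varsigma})^{\mp},\overline{(\mathbf{v},v)^{\mp}}\bigr\rangle_{\Gamma}.
\]
Adding them, and using the hypothesis $\jump{(\mathbf{v},v)}=\mathbf{0}$ so that $(\mathbf{v},v)^-=(\mathbf{v},v)^+=(\mathbf{v},v)|_{\Gamma}$ on the right, the two boundary pairings collapse into $\langle\jump{\mathcal{R}_N(\mathbf{u}_{\lambda},\theta_{\varsigma})},\overline{(\mathbf{v},v)|_{\Gamma}}\rangle_{\Gamma}$, which by (\ref{eq:4.9c}) equals the right-hand side of (\ref{eq:4.10}); the left-hand sides combine to $\mathcal{A}_{\mathbb{R}^3}((\mathbf{u}_{\lambda},\theta_{\varsigma}),(\mathbf{v},v))$ since $\Gamma$ is of measure zero. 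A small care is needed in $\Omega^+$ to justify the use of Green's identity on an unbounded domain; this follows from the $H^1(\mathbb{R}^3\setminus\Gamma)$-regularity of the simple-layer potential together with the decay inherited from $\underline{\underline{\mathbf{E}}}$ (whose blocks are built from decaying Yukawa fundamental solutions), which lets a standard truncation-and-passage-to-the-limit argument close.

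\smallskip

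\noindent\emph{Step 3: the direction (\ref{eq:4.10})} $\Longrightarrow$ (\ref{eq:4.9}). First restrict the test pair $(\mathbf{v},v)$ to have compact support in $\Omega^-$ or $\Omega^+$ separately (for which $\jump{(\mathbf{v},v)}=\mathbf{0}$ trivially). Then (\ref{eq:4.10}) reduces to $\mathcal{A}_{\Omega^{\pm}}((\mathbf{u}_{\lambda},\theta_{\varsigma}),(\mathbf{v},v))=0$, and running (\ref{eq:3.1}) backwards recovers (\ref{eq:4.9a}) in the distributional, hence by elliptic regularity in the classical, sense on each side of $\Gamma$. Feeding this PDE back into (\ref{eq:3.1}) for a general test pair with $\jump{(\mathbf{v},v)}=\mathbf{0}$ and comparing with (\ref{eq:4.10}) produces $\langle\jump{\mathcal{R}_N(\mathbf{u}_{\lambda},\theta_{\varsigma})},\overline{(\mathbf{v},v)|_{\Gamma}}\rangle_{\Gamma}=\langle(\boldsymbol{\lambda},\varsigma),\overline{(\mathbf{v},v)|_{\Gamma}}\rangle_{\Gamma}$, and since the admissible traces $(\mathbf{v},v)|_{\Gamma}$ exhaust $\mathbf{H}^{1/2}(\Gamma)\times H^{1/2}(\Gamma)$, identity (\ref{eq:4.9c}) follows by duality. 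Condition (\ref{eq:4.9b}) is encoded in the working space once we interpret $\mathbf{u}_{\lambda}=\mathcal{S}(s)(\boldsymbol{\lambda},\varsigma)$ as a globally $H^1$ function (Step 1 shows it has matching traces); alternatively, the proposition may be read as characterizing those variational solutions that are in addition continuous across $\Gamma$, in which case (\ref{eq:4.9b}) is by assumption.
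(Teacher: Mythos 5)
Your overall architecture coincides with the paper's: \eqref{eq:4.9a} by differentiating under the integral sign, \eqref{eq:4.9b} from the $H^1$/trace continuity of the simple layer, and the equivalence of \eqref{eq:4.9} with \eqref{eq:4.10} by applying the first Green identity \eqref{eq:3.1} separately in $\Omega^-$ and $\Omega^+$, summing, and testing with traces that exhaust $\mathbf{H}^{1/2}(\Gamma)\times H^{1/2}(\Gamma)$ (the paper compresses this into one sentence). The one place where you depart from the paper --- and where there is a genuine gap --- is the jump relation \eqref{eq:4.9c}. You propose to obtain it from the classical, kernel-based jump relations for the elastic traction of the Lam\'e simple layer and for the normal derivative of the Yukawa simple layer, assembled blockwise. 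Those classical relations are established for H\"older-continuous densities on $C^{1+\alpha}$ boundaries (Kupradze's setting, recalled in Theorem 3.1), whereas Proposition \ref{pro:4.1} is posed on a Lipschitz $\Gamma$ with $(\boldsymbol{\lambda},\varsigma)$ only in $\mathbf{H}^{-1/2}(\Gamma)\times H^{-1/2}(\Gamma)$; in that setting $\mathcal{R}_N(\mathbf{u}_\lambda,\theta_\varsigma)$ has no pointwise meaning and its one-sided values must be defined weakly. To make your route rigorous you would additionally need a density-and-continuity argument (smooth densities are dense in $H^{-1/2}(\Gamma)$ and both sides of \eqref{eq:4.9c} depend continuously on the density), plus a verification that the off-diagonal blocks of $\underline{\underline{\bf E}}$ contribute no jump --- not only the $-\gamma\,\mathbf{n}\,\theta_\varsigma$ term you mention, but also the contribution of $\varsigma$ to $\mathbf{u}_\lambda$ and of $\boldsymbol{\lambda}$ to $\theta_\varsigma$ through the coupling kernels. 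None of this is supplied in your Step 1.

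The paper's own (terse) proof sidesteps all of this by obtaining \eqref{eq:4.9c} from \eqref{eq:4.9a} and \eqref{eq:3.1}: since $\mathbf{B}(\partial_x,s)(\mathbf{u}_\lambda,\theta_\varsigma)=\mathbf{0}$ on either side of $\Gamma$, the one-sided conormal data are defined variationally through Green's identity, and summing the two identities identifies $\left\langle \jump{\mathcal{R}_N(\mathbf{u}_\lambda,\theta_\varsigma)},\cdot\right\rangle_\Gamma$ with $\mathcal{A}_{\mathbb{R}^3\setminus\Gamma}\left((\mathbf{u}_\lambda,\theta_\varsigma),\cdot\right)$, which in turn equals $\left\langle(\boldsymbol{\lambda},\varsigma),\cdot\right\rangle_\Gamma$ because the simple layer satisfies $\mathbf{B}(\partial_x,s)\,\mathcal{S}(s)(\boldsymbol{\lambda},\varsigma)=-(\boldsymbol{\lambda},\varsigma)\,\delta_\Gamma$ distributionally in all of $\mathbb{R}^3$ (checked for smooth densities and extended by density). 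You already possess exactly this machinery in your Steps 2 and 3; the fix is to use it to \emph{define and compute} the Neumann jump rather than to import the classical jump relations. Your remarks on the unbounded-domain Green identity, the sign bookkeeping, and the role of \eqref{eq:4.9b} in the ``if and only if'' statement are otherwise sound and consistent with the paper.
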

\begin{proof}
Regularity is guaranteed by the properties of the integral formulations of the layer potentials. The differential equation \eqref{eq:4.9a} is satisfied pointwise by differentiating directly the fundamental solution, and therefore it is satisfied in a distributional sense. Condition \eqref{eq:4.9b} is a direct consequence of the fact that $({\bf u}_{\lambda}, \theta_{\varsigma}) \in {\bf H}^1(\mathbb{R}^3 \setminus \Gamma) \times H^1(\mathbb{R}^3 \setminus \Gamma)$. Finally, 
condition \eqref{eq:4.9c} follows from \eqref{eq:4.9a} and \eqref{eq:3.1}. The equivalence of \eqref{eq:4.9} and \eqref{eq:4.10} is straightforward.
\end{proof}
We now introduce the thermoelastic simple-layer (or single layer) operator:
\begin{definition} The thermoelastic simple-layer (or single-layer) operator $ \mathcal{V}(s)$ is a continuous linear mapping ${\bf H}^{-1/2}(\Gamma) \times H^{-1/2}(\Gamma) \rightarrow \ {\bf H}^{1/2}(\Gamma) \times H^{1/2}(\Gamma) $ defined by
\[
\mathcal{V}(s)({\bm \lambda}, \varsigma) := \ave{\mathcal {R}_{D}  \mathcal{S}(s)({\bm \lambda}, \varsigma)}\quad \mbox{for} \quad ({\bm \lambda}, \varsigma) \in {\bf H}^{-1/2}(\Gamma) \times H^{-1/2}(\Gamma).
\]
\end{definition}
Consider the re-scaling 
\[
Z(s) := \left( \begin{array} {ll}
 \overline{s}\,\mathbf{I}_3 & 0 \\
  0 & \gamma/\eta
 \end{array} \right ),
\]
and note the following bound that will be useful in the following arguments
\[
\|Z(s)\|_{\max} \lesssim \frac{|s|}{\underline{\sigma}}.
\]
A simple computation shows that the variational equation \eqref{eq:4.10} is equivalent to
\begin{align}\label{eq:4.13}
 \mathcal{A}_{\mathbb{R}^3\setminus \Gamma}\left( Z(s)  ({\bf u}_{\lambda}, \theta_{\varsigma} ), \overline {( {\bf v},  v)} \right)& = \left\langle Z(s)(\boldsymbol{\lambda},\varsigma ), \overline{({\bf v},  v)}|_{\Gamma} \right\rangle_{\Gamma}. 
\end{align}
 Hence 
 \begin{align}
 \mathrm{Re} \left( \Big\langle Z(s) (\boldsymbol{\lambda},\varsigma), \overline{ {\mathcal{V}}(s) (\boldsymbol{\lambda},\varsigma) }\Big\rangle_{\Gamma} \right) & =
\mathrm{Re}\left(\mathcal{A}_{\mathbb{R}^3\setminus \Gamma} \left(  Z(s)( {\bf u}_{\lambda}, \theta_{\varsigma} ), \overline{ ( {\bf u_{\lambda}}, \theta_{\varsigma})} \right)\right) \nonumber \\
&\geq \underline{\sigma}\triple{\mathbf u_{\lambda}}^2_{|s|,\mathbb{R}^3\setminus \Gamma} +\frac{\gamma}{\eta}\frac{\sigma}{|s|}\triple{\theta_{\varsigma}}^2_{|s|, \mathbb{R}^3\setminus \Gamma} \nonumber \\
&\geq ~ c \,\frac{\sigma\underline{\sigma}}{|s|}\,\triple{( \mathbf u_{\lambda}, \theta_{\varsigma} )}^2_{|s|, \mathbb{R}^3\setminus \Gamma}, \label{eq:4.16}
 \end{align}
 where $c:= \min\{1, \gamma/ \eta \} $.
 This shows that the sesquilinear form in \eqref{eq:4.13} is coercive  (see, e.g., in \cite{HsSaSaWe:2016}). Consequently, we have the result:
\begin{corollary} The transmission problem (\ref{eq:4.10}) as well as (\ref{eq:4.9}) has a unique solution
in ${\bf H}^1(\mathbb{R}^3\setminus \Gamma) \times H^1(\mathbb{R}^3\setminus \Gamma)$.
\end{corollary}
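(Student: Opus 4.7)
The plan is to apply the Lax--Milgram lemma to the variational problem \eqref{eq:4.13} after restricting to the closed subspace
\[
\mathcal{H} := \left\{ (\mathbf{v}, v) \in \mathbf{H}^1(\mathbb{R}^3 \setminus \Gamma) \times H^1(\mathbb{R}^3 \setminus \Gamma) : \jump{(\mathbf{v}, v)} = \mathbf{0} \right\},
\]
which is the natural test space identified in Proposition \ref{pro:4.1}. Since Proposition \ref{pro:4.1} already provides the equivalence between the transmission problem \eqref{eq:4.9} and the variational formulation \eqref{eq:4.10} (hence also the rescaled form \eqref{eq:4.13}), it suffices to establish well-posedness of \eqref{eq:4.13} on $\mathcal{H}$.

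First I would verify continuity of the sesquilinear form $(\mathbf{w}, w; \mathbf{v}, v) \mapsto \mathcal{A}_{\mathbb{R}^3 \setminus \Gamma}(Z(s)(\mathbf{w}, w), \overline{(\mathbf{v}, v)})$ on $\mathcal{H}$. Inspecting the definition of $\mathcal{A}_{\Omega^-}$ term by term and applying Cauchy--Schwarz on each, together with the bound $\|Z(s)\|_{\max} \lesssim |s|/\underline{\sigma}$ and the norm equivalences \eqref{eq:4.6}, yields a continuity constant that is polynomial in $|s|$ and $1/\underline{\sigma}$; this is routine and involves no new ideas. Coercivity on $\mathcal{H}$ is precisely the content of \eqref{eq:4.16}: the real part is bounded below by
\[
c\,\frac{\sigma \underline{\sigma}}{|s|}\, \triple{(\mathbf{v}, v)}_{|s|, \mathbb{R}^3 \setminus \Gamma}^2,
\]
and combining this with \eqref{eq:4.6} converts the lower bound into one in terms of the standard $\mathbf{H}^1 \times H^1$ norm, with coercivity constant $\sim \sigma \underline{\sigma}^3 / |s|$. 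For the right-hand side, the map $(\mathbf{v}, v) \mapsto \langle Z(s)(\boldsymbol{\lambda}, \varsigma), \overline{(\mathbf{v}, v)}|_\Gamma \rangle_\Gamma$ is a bounded antilinear functional on $\mathcal{H}$ by the continuity of the trace $\mathbf{H}^1 \times H^1 \to \mathbf{H}^{1/2} \times H^{1/2}$ and the duality with $\mathbf{H}^{-1/2} \times H^{-1/2}$, with norm controlled by $\|Z(s)\|_{\max}\,(\|\boldsymbol{\lambda}\|_{-1/2,\Gamma} + \|\varsigma\|_{-1/2,\Gamma})$.

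With these three ingredients in hand, the Lax--Milgram lemma furnishes a unique $(\mathbf{u}_\lambda, \theta_\varsigma) \in \mathcal{H}$ solving \eqref{eq:4.13}, hence solving \eqref{eq:4.10}. By Proposition \ref{pro:4.1}, this pair is the unique $\mathbf{H}^1(\mathbb{R}^3 \setminus \Gamma) \times H^1(\mathbb{R}^3 \setminus \Gamma)$ solution of the transmission problem \eqref{eq:4.9}, and it necessarily coincides with $\mathcal{S}(s)(\boldsymbol{\lambda}, \varsigma)$. I do not anticipate a genuine obstacle here: the decisive work—building the rescaling $Z(s)$ that symmetrizes the coupling and produces the coercivity estimate \eqref{eq:4.16}—has already been done in the paragraphs preceding the corollary. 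The only item requiring care is bookkeeping the $s$-dependence of the continuity and coercivity constants, which will matter later in Section \ref{sec:TimeDomain} but is not needed for the qualitative well-posedness asserted in the corollary itself.
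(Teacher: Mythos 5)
Your proposal is correct and follows essentially the same route as the paper: the corollary is obtained there as an immediate consequence of the coercivity estimate \eqref{eq:4.16} for the rescaled form \eqref{eq:4.13}, with a Lax--Milgram argument (continuity and the boundedness of the right-hand side being left implicit, with a pointer to \cite{HsSaSaWe:2016}). You simply make explicit the zero-jump subspace, the continuity bound, and the trace/duality estimate for the functional, which the paper takes for granted.
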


Similarly, for the double-layer potential $  \mathcal{D} (s)$, we may consider the transmission problem:
\begin{proposition}\label{pro:4.3}
 Let $ ({\boldsymbol{\phi}}, \varphi) \in {\bf H}^{1/2}(\Gamma) \times H^{1/2}(\Gamma) $ and consider the function $({\bf u}_{\phi}, \theta_{\varphi}) :=  \mathcal{D} (s)(\boldsymbol{\phi}, 
 \varphi) \in {\bf H}^1(\mathbb{R}^3\setminus \Gamma) \times H^1(\mathbb{R}^3\setminus \Gamma)$. Then 
\begin{subequations} \label{eq:4.18}
\begin{align}
\mathbf{B}(\partial_x , s) (\mathbf u_{\phi}, {\theta}_{\varphi}) 
& = {\bf 0}\quad \mbox{in}\quad \mathbb{R}^3 \setminus \Gamma, \label{eq:18a}\\
\jump{({\bf u}_{\phi}, \theta_{\varphi})} & =(\boldsymbol{\phi}, \varphi) , \label{eq:18b}\\
\jump{\mathcal{R}_N ({ \bf u}_{\phi}, \theta_{\varphi})} & = {\bf 0}. \label{eq:18c}
\end{align}
\end{subequations}
Moreover, $({\bf u}_{\phi}, \theta_{\varphi}) \in {\bf H}^1(\mathbb{R}^3\setminus \Gamma) \times H^1(\mathbb{R}^3\setminus \Gamma) $ is a solution of {\em(}\ref{eq:4.18}{\em )} if and only if ${\mathbf u}_{\phi} \in {\bf H}^1(\mathbb{R}^3\setminus \Gamma)$ and $\theta_{\varphi} \in H^1(\mathbb{R}^3\setminus \Gamma)$ are such that $\jump{({\bf u}_{\phi}, \theta_{\varphi})}=(\boldsymbol{\phi}, \varphi)$ and
\begin{equation}
\mathcal{A}_{ \mathbb{R}^3\setminus \Gamma} \left( ( {\bf u}_{\phi}, \theta_{\varphi} ), ( {\bf v},  v) \right) =
\left\langle \mathcal{R}_N ( { \bf u}_{\phi}, \theta_{\varphi} ) |_{\Gamma}, \overline{\jump{({\bf v},  v)}}
\right\rangle_{\Gamma} = 0 \label{eq:4.20}
\end{equation}
for all $( {\bf v},  v) \in {\bf H}^1(\mathbb{R}^3\setminus \Gamma) \times H^1(\mathbb{R}^3\setminus \Gamma) $, where 
\[
\mathcal{R}_N ( { \bf u}_{\phi}, \theta_{\varphi} ) |_{\Gamma}=
\{ \mathcal{R}_N( { \bf u}_{\phi}, \theta_{\varphi} )\}^{\pm}.
\]
\end{proposition}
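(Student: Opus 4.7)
The proof follows the template of Proposition \ref{pro:4.1} with the Dirichlet and Neumann roles interchanged. First, well-definedness and regularity of $(\mathbf{u}_\phi,\theta_\varphi)$ in $\mathbf{H}^1(\mathbb{R}^3\setminus\Gamma)\times H^1(\mathbb{R}^3\setminus\Gamma)$ follow from the standard $\mathbf{H}^{1/2}\to\mathbf{H}^1_{\mathrm{loc}}$ mapping properties of the double-layer potential, together with smoothness of the kernel $(\mathcal{R}^*_{N_y}\underline{\underline{\mathbf{E}}}^\top(x,y;s))^\top$ away from the diagonal $x=y$. The homogeneous equation \eqref{eq:18a} is then obtained by differentiating under the integral sign, using the fact that the fundamental solution constructed in Appendix A satisfies $\mathbf{B}^*(\partial_y,s)\underline{\underline{\mathbf{E}}}^\top(x,y;s)=\mathbf{0}$ for $x\neq y$; this gives the PDE pointwise in each connected component of $\mathbb{R}^3\setminus\Gamma$, and hence distributionally.

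The jump relations \eqref{eq:18b}--\eqref{eq:18c} are the classical jump relations for the thermoelastic double-layer potential. Using the block structure of the conormal operator $\mathcal{R}^*_{N_y}$ and of $\underline{\underline{\mathbf{E}}}$, the kernel splits into a diagonal part that reproduces the standard jumps of the elastic (Lam\'e) and Yukawa double layers, plus off-diagonal couplings of strictly lower order that contribute smoothing terms with no jump. In the H\"older setting these relations are recorded in Kupradze \cite{Ku:1979}; their extension to $\mathbf{H}^{1/2}(\Gamma)\times H^{1/2}(\Gamma)$ follows either by density of $\mathbf{C}^{\alpha}$ data and continuity of the relevant trace operators, or by directly invoking the Sobolev-space jump analysis for each component layer potential, as in \cite{HsWe:2008}.

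Given \eqref{eq:4.18}, the variational identity \eqref{eq:4.20} is derived by applying the first Green formula \eqref{eq:3.1} in $\Omega^-$ and in a truncated exterior $\Omega^+\cap B_R$, summing the two identities, and letting $R\to\infty$; the contribution from $\partial B_R$ vanishes since $(\mathbf{u}_\phi,\theta_\varphi)$ decays exponentially, as a Yukawa-type potential with $\mathrm{Re}(s)>0$. The bulk term containing $\mathbf{B}(\mathbf{u}_\phi,\theta_\varphi)$ drops out by \eqref{eq:18a}, and \eqref{eq:18c} collapses the remaining pair of boundary integrals over $\Gamma$ into the single-valued pairing $\langle\mathcal{R}_N(\mathbf{u}_\phi,\theta_\varphi)|_\Gamma,\overline{\jump{(\mathbf{v},v)}}\rangle_\Gamma$. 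The converse direction follows by a standard density/integration-by-parts argument: testing against $(\mathbf{v},v)$ with support on one side of $\Gamma$ recovers \eqref{eq:18a} distributionally in each component, after which the surviving boundary term forces \eqref{eq:18c}; condition \eqref{eq:18b} is imposed a priori.

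The principal technical obstacle is the rigorous justification of the jump relations \eqref{eq:18b}--\eqref{eq:18c} in the $\mathbf{H}^{1/2}$ setting for the full $4\times 4$ coupled system, because $\underline{\underline{\mathbf{E}}}$ mixes elastodynamic and parabolic behavior through the coupling constants $\gamma$ and $\eta$. The solution expansion of the preceding Lemma, which decomposes regular solutions into Yukawa pieces, is precisely the device that lets one reduce this step to classical jump results, and is the tool I would exploit to close the argument.
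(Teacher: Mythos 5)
There is a genuine gap, and it sits exactly at the point you yourself flag as ``the principal technical obstacle.'' Your route is the classical one: establish the mapping properties and the jump relations \eqref{eq:18b}--\eqref{eq:18c} for the thermoelastic double-layer potential, and then obtain \eqref{eq:4.20} from the first Green formula on $\Omega^-$ and a truncated exterior domain. But the jump relations in ${\bf H}^{1/2}(\Gamma)\times H^{1/2}(\Gamma)$ on a merely Lipschitz boundary are never actually proved in your argument; they are deferred to ``Kupradze plus density'' and to the solution-expansion lemma. That closing move does not work as stated: Kupradze's jump relations are proved for H\"older densities on boundaries of class $C^{1+\alpha}$, whereas Section \ref{sec:BoundaryValueProblems} only assumes $\Gamma\in C^{0,1}$; and a density argument from $\mathbf{C}^{\alpha}(\Gamma)$ data presupposes the $s$-uniform continuity of the one-sided trace and conormal maps $(\boldsymbol\phi,\varphi)\mapsto\{\mathcal{D}(s)(\boldsymbol\phi,\varphi)\}^{\pm}$, $\{\mathcal{R}_N\mathcal{D}(s)(\boldsymbol\phi,\varphi)\}^{\pm}$ in the Sobolev scale, which is essentially the statement to be proved. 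The expansion lemma of Section \ref{sec:GoverningEquations} concerns regular solutions in the domain of regularity and does not by itself deliver boundary limits of layer potentials with $H^{1/2}$ densities on Lipschitz interfaces, so the crux of the proposition remains open in your proposal. (The Green-formula derivation of \eqref{eq:4.20} and the truncation argument using the exponential decay of the Yukawa-type kernels for $\mathrm{Re}\,s>0$ are fine, as is the converse direction.)

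The paper avoids this obstacle entirely, which is why its proof looks nothing like yours: it never invokes classical jump relations. Instead it lifts the datum to some $\widetilde{(\boldsymbol\phi,\varphi)}\in{\bf H}^1(\mathbb{R}^3\setminus\Gamma)\times H^1(\mathbb{R}^3\setminus\Gamma)$ with $\jump{\widetilde{(\boldsymbol\phi,\varphi)}}=(\boldsymbol\phi,\varphi)$, writes any solution of \eqref{eq:4.18} as $\widetilde{(\boldsymbol\phi,\varphi)}+({\bf w},\varrho)$ with $\jump{({\bf w},\varrho)}={\bf 0}$, and solves the resulting zero-jump variational problem by the coercivity estimate \eqref{eq:4.16} for the rescaled form $\mathcal{A}_{\mathbb{R}^3\setminus\Gamma}(Z(s)\,\cdot,\cdot)$ (a Lax--Milgram argument), obtaining uniqueness together with an explicit $s$-dependent bound that feeds into Lemma \ref{le:4.6}(c); the identification of $\mathcal{D}(s)(\boldsymbol\phi,\varphi)$ with this unique solution then follows from \eqref{eq:4.20}, exactly as in Proposition \ref{pro:4.1}. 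In other words, the transmission problem is used to \emph{define and characterize} the double-layer potential variationally on Lipschitz boundaries, rather than the potential-theoretic jump calculus being used to verify the transmission conditions. If you want to keep your classical route, you must either strengthen the hypothesis on $\Gamma$ and carry out the Kupradze-type limit analysis for the full coupled $4\times4$ kernel, or import a Lipschitz-domain layer-potential theory; otherwise the variational argument of the paper is the natural way to close the gap.
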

\begin{proof}
We extend $ ({\boldsymbol{\phi}}, \varphi) \in {\bf H}^{1/2}(\Gamma) \times H^{1/2}(\Gamma) $ to $\widetilde{({\boldsymbol{\phi}}, \varphi)} \in {\bf H}^1(\mathbb{R}^3\setminus \Gamma) \times H^1(\mathbb{R}^3\setminus \Gamma) $ such that $\jump{\widetilde{({\boldsymbol{\phi}}, \varphi)}} = (\boldsymbol{\phi}, \varphi) \in 
 {\bf H}^{1/2}(\Gamma) \times H^{1/2}(\Gamma)$ and let $ ({\bf w}, \varrho) : = ( {\bf u}, \theta ) - \widetilde{({\boldsymbol{\phi}}, \varphi)}$, where $ ( {\bf u}, \theta ) \in {\bf H}^1(\mathbb{R}^3\setminus \Gamma) \times H^1(\mathbb{R}^3\setminus \Gamma) $ is any pair satisfying \eqref{eq:4.18}. Then $ ({\bf w}, \varrho) \in {\bf H}^1(\mathbb{R}^3\setminus \Gamma) \times H^1(\mathbb{R}^3\setminus \Gamma) $ with $\jump{({\bf w}, \varrho)} = {\bf 0} $ and satisfies the variational equation:
\begin{align*} 
\mathcal{A}_{ \mathbb{R}^3\setminus \Gamma} \left( ({\bf w}, \varrho) , ( {\bf v},  v) \right) 
& = - \mathcal{A}_{ \mathbb{R}^3\setminus \Gamma} \Big( \widetilde{({\boldsymbol{\phi}}, \varphi)} , ( {\bf v},  v) \Big) 
\end{align*}
for all $( {\bf v},  v) \in {\bf H}^1(\mathbb{R}^3\setminus \Gamma) \times H^1(\mathbb{R}^3\setminus \Gamma)$. Thus, starting from \eqref{eq:4.16}
\begin{align*}
c~\frac{\sigma\underline{\sigma}}{|s|} \triple{ ({\bf w}, \varrho) }^2_{|s|, \mathbb{R}^3\setminus \Gamma}
&\leq \mathrm{Re} \left( \mathcal{A}_{ \mathbb{R}^3\setminus \Gamma} \left(Z(s)({\bf w}, \varrho) , \overline{({\bf w}, \varrho) } \right) \right)\\
&\leq \left|\mathcal{A}_{\mathbb{R}^3\setminus \Gamma}\left(Z(s) \widetilde{({\boldsymbol{\phi}}, \varphi)}, \overline{({\bf w}, \varrho) }\right)\right|\\
& \leq c\frac{|s|}{\underline{\sigma}} \triple{\widetilde{({\boldsymbol{\phi}}, \varphi)}}_{|s|, \mathbb{R}^3\setminus \Gamma}~\triple{({\bf w}, \varrho )}_{|s|, \mathbb{R}^3\setminus \Gamma}. 
\end{align*}
This yields the estimate 
\begin{equation*}
 \triple{({\bf w}, \varrho) }_{1, \mathbb{R}^3\setminus \Gamma}
 \leq c \frac{|s|^3} {\sigma\underline{\sigma}^{9/2}} ~ \triple{\widetilde{({\boldsymbol{\phi}}, \varphi)}}_{1, \mathbb{R}^3\setminus \Gamma}, 
\end{equation*}
and $ ( {\bf u}, \theta) = \widetilde{ ({\boldsymbol{\phi}}, \varphi)} + ({\bf w}, \varrho)  $ is the unique solution.
In particular, we see from \eqref{eq:4.20} that if $ \widetilde{({\boldsymbol{\phi}}, \varphi)} =  \mathcal{D} (s)(\boldsymbol{\phi}, \varphi)$, then $ ( {\bf w}, \varrho) \equiv {\bf 0}.$
\end{proof}

We notice that 
\begin{align}\label{eq:4.21}
\mathcal{A}_{\mathbb{R}^3\setminus \Gamma}\left(  Z(s)({\bf u}_{\phi}, \theta_{\varphi} ), \overline{ ( {\bf u_{\phi}}, \theta_{\varphi})} \right)&= \left\langle Z(s) \mathcal{R}_N ( { \bf u}_{\phi}, \theta_{\varphi} ) |_{\Gamma}, \overline{ \jump{({\bf u}_{\phi}, \theta_{\varphi}) } }\right\rangle_{\Gamma}.
\end{align}
Again making use of the inequality \eqref{eq:4.16}  for the solution $( {\bf u}_{\phi}, \theta_{\varphi} )$, we may establish the existence and uniqueness of results for the 
transmission problem (\ref{eq:4.18}):
\begin{corollary} There exists a unique solution solution of the transmission problem \eqref{eq:4.18} in 
$ {\bf H}^1(\mathbb{R}^3\setminus \Gamma) \times H^1(\mathbb{R}^3\setminus \Gamma) $. 
\end{corollary}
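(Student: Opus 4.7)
The plan is to recognize that Proposition \ref{pro:4.3} has in fact already reduced \eqref{eq:4.18} to a coercive variational problem on the closed subspace
\[
V := \left\{(\mathbf{v}, v) \in \mathbf{H}^1(\mathbb{R}^3\setminus\Gamma) \times H^1(\mathbb{R}^3\setminus\Gamma) : \jump{(\mathbf{v}, v)} = \mathbf{0}\right\},
\]
so that the corollary follows by a direct application of the complex Lax--Milgram lemma combined with the key energy estimate \eqref{eq:4.16}.

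First I would lift the jump datum: as in the proof of Proposition \ref{pro:4.3}, choose a bounded lifting $\widetilde{(\boldsymbol{\phi}, \varphi)}$ with $\jump{\widetilde{(\boldsymbol{\phi}, \varphi)}} = (\boldsymbol{\phi}, \varphi)$, and write any prospective solution as $(\mathbf{u}_\phi, \theta_\varphi) = \widetilde{(\boldsymbol{\phi}, \varphi)} + (\mathbf{w}, \varrho)$ with $(\mathbf{w}, \varrho) \in V$. After pre-multiplication by the rescaling $Z(s)$ exactly as in \eqref{eq:4.13} and \eqref{eq:4.21}, the variational formulation \eqref{eq:4.20} becomes: find $(\mathbf{w}, \varrho) \in V$ such that
\[
\mathcal{A}_{\mathbb{R}^3\setminus\Gamma}\bigl(Z(s)(\mathbf{w}, \varrho),\, \overline{(\mathbf{v}, v)}\bigr) = -\,\mathcal{A}_{\mathbb{R}^3\setminus\Gamma}\bigl(Z(s)\widetilde{(\boldsymbol{\phi}, \varphi)},\, \overline{(\mathbf{v}, v)}\bigr) \qquad \forall (\mathbf{v}, v) \in V.
\]

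Next I would verify the three Lax--Milgram hypotheses. Continuity of the sesquilinear form on $V$ in the weighted energy norm $\triple{\,\cdot\,}_{|s|, \mathbb{R}^3\setminus \Gamma}$ is immediate from the explicit form of $\mathcal{A}_{\mathbb{R}^3\setminus\Gamma}$ and the bound $\|Z(s)\|_{\max}\lesssim |s|/\underline{\sigma}$. Continuity of the anti-linear right-hand side follows from the same bounds together with the boundedness of the lifting $\widetilde{(\boldsymbol{\phi}, \varphi)}$. Most importantly, coercivity on $V$ is exactly \eqref{eq:4.16}, which gives
\[
\mathrm{Re}\,\mathcal{A}_{\mathbb{R}^3\setminus\Gamma}\bigl(Z(s)(\mathbf{w}, \varrho),\, \overline{(\mathbf{w}, \varrho)}\bigr) \geq c\,\frac{\sigma\,\underline{\sigma}}{|s|}\,\triple{(\mathbf{w}, \varrho)}^2_{|s|, \mathbb{R}^3\setminus \Gamma}.
\]
Lax--Milgram then produces a unique $(\mathbf{w}, \varrho) \in V$, and therefore a unique $(\mathbf{u}_\phi, \theta_\varphi) = \widetilde{(\boldsymbol{\phi}, \varphi)} + (\mathbf{w}, \varrho)$ satisfying \eqref{eq:4.18}. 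Uniqueness can alternatively be read off directly: the difference of two solutions lies in $V$ and satisfies the homogeneous variational identity, so coercivity forces it to vanish.

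The main obstacle is really a conceptual rather than a technical one: confirming that the transmission condition $\jump{\mathcal{R}_N(\mathbf{u}_\phi, \theta_\varphi)} = \mathbf{0}$ is correctly encoded by restricting test functions to $V$ (equivalently, that the natural boundary condition hidden in the first Green formula \eqref{eq:3.1} reproduces \eqref{eq:18c}). This equivalence, however, was established in Proposition \ref{pro:4.3}, so no new work beyond invoking the coercivity is required.
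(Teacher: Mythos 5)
Your proposal is correct and follows essentially the same route as the paper: the paper's proof of Proposition \ref{pro:4.3} already performs the lifting of the jump datum, reduces to the variational problem on the zero-jump subspace, and invokes the coercivity estimate \eqref{eq:4.16} (after the $Z(s)$ rescaling) to get the unique $(\mathbf{w},\varrho)$, and the corollary is then just this Lax--Milgram-type argument restated via \eqref{eq:4.21}. No substantive difference from the paper's argument.
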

\begin{definition}The hypersingular boundary integral operator $\mathcal{W}(s)$ is a continuous linear mapping ${\bf H}^{1/2}(\Gamma) \times H^{1/2}(\Gamma) \rightarrow \ {\bf H}^{-1/2}(\Gamma) \times H^{-1/2}(\Gamma)$ defined by 
\[
\mathcal{W}(s)({\bm \phi}, \varphi) := - \Big\{ {\mathcal R}_{N} \mathcal{D}(s)({\bm \phi}, \varphi)\Big\}^{\mp}\quad \mbox{for} \quad ({\bm \phi}, \varphi) \in {\bf H}^{1/2}(\Gamma) \times H^{1/2}(\Gamma).
\]
\end{definition}

From the weak formulations of the transmission problems \eqref{eq:4.9} and \eqref{eq:4.18}, one may obtain 
coercivity of the corresponding boundary integral operators. Let us begin with the hypersingular boundary integral operator $\mathcal{W}$. We note that from (\ref{eq:4.21}),
 \begin{align}
\mathrm{Re} \left(\left\langle Z(s) \mathcal{W}(s)(\boldsymbol{\phi},\varphi), \overline{(\boldsymbol{\phi}, \varphi )} \right\rangle_{\Gamma} \right) &= \mathrm{Re} \left( \left\langle Z(s) \mathcal{R}_N ( { \bf u}_{\phi}, \theta_{\varphi} ) |_{\Gamma}, \overline{\jump{({\bf u}_{\phi}, \theta_{\varphi})} }\right\rangle_{\Gamma}\right) \nonumber\\
& = \mathrm{Re} \left( \mathcal{A}_{\mathbb{R}^3\setminus \Gamma} \left( Z(s) ({\bf u}_{\phi}, \theta_{\varphi} ), \overline{({\bf u}_{\phi}, \theta_{\varphi})}\right) \right) \nonumber\\
\nonumber
&\geq ~ c \,\frac{\sigma\underline{\sigma}}{|s|}\,\triple{( \mathbf u_{\phi}, \theta_{\varphi} )}^2_{|s|, \mathbb{R}^3\setminus \Gamma},\\
\label{eq:4.24}
&\geq  \frac{\sigma\underline{\sigma}^3}{|s|}\,\triple{( \mathbf u_{\phi}, \theta_{\varphi} )}^2_{|1|, \mathbb{R}^3\setminus \Gamma}  .  
\end{align}
Hence, from the trace theorem it follows that
\[
\mathrm{Re} \left(\left\langle Z(s) \mathcal{W}(s)(\boldsymbol{\phi},\varphi), \overline{(\boldsymbol{\phi}, \varphi )} \right\rangle_{\Gamma} \right) \geq C_\Gamma  \frac{\sigma\underline{\sigma}^3}{|s|}\| (\boldsymbol{\phi}, \varphi) \|^2_{{\bf H}^{1/2} (\Gamma) \times H^{1/2} (\Gamma)}.
\]
Now for the weakly singular boundary integral operator $\mathcal{V}(s)$, it follows from \eqref{eq:4.16} that
\[
\mathrm{Re} \left(\left\langle Z(s)(\boldsymbol{\lambda},\varsigma ), \overline{\mathcal{V}(s)(\boldsymbol{\lambda},\varsigma)} \right\rangle_{\Gamma} \right) 
 \geq c \, \frac{\sigma\underline{\sigma}}{|s|} \triple{ (\mathbf{u}_{\lambda}, \theta_{\varsigma})}^2_{|s|, \mathbb{R}^3\setminus \Gamma}.
\]
It remains to bound $\triple{(\mathbf u_{\lambda}, \theta_{\varsigma})}^2_{|s|, \mathbb{R}^3\setminus \Gamma}$ below by $ 
\| (\boldsymbol{\lambda},\varsigma )\|^2_{{\bf H}^{-1/2} (\Gamma) \times H^{-1/2} (\Gamma)} $. From the jump condition, we see that 
\begin{equation} \label{eq:4.28}
\|(\boldsymbol{\lambda}, \varsigma) \|_{{\bf H}^{-1/2} (\Gamma)}
 = \| \jump{\mathcal{R}_N ({\bf u}_{\lambda}, \theta_{\varsigma} )}\|_ {{\bf H}^{-1/2} (\Gamma)}.
\end{equation}
Since we do not have a lifting -type lemma, we need here a generalized first Green theorem (see, e.g. \cite{HsWe:2008}). By the definition, 
\[
 \|\{ \mathcal{R}_N ({\bf u}_{\lambda}, \theta_{\varsigma} )\}^{\pm} \|_ {{\bf H}^{-1/2} (\Gamma)}
 := \sup \frac{\langle \{\mathcal{R}_N({\bf u}_{\lambda}, \theta_{\varsigma})\}^{\pm}, 
 ({\boldsymbol \varphi},  \varphi) \rangle} { \|(\boldsymbol \varphi,  \varphi)\|_{{\bf H}^{+1/2} (\Gamma)} }.
 \]
First, we extend $ (\boldsymbol \varphi, \varphi) \in {\bf H}^{1/2} (\Gamma)$ to $(\tilde{\bf v}, \tilde{ v}) \in {\bf H}^{1}
(\Omega^{\mp})$ so that the traces $\{\tilde{\bf v}, \tilde{ v}\}^{\mp} = (\boldsymbol\varphi ,  \varphi) \in {\bf H}^{1/2} (\Gamma)$. Then from the first Green formula \eqref{eq:3.1}, we see that 
 \begin{align*}
\left\langle \{\mathcal{R}_N ({\bf u}_{\lambda}, ~ \theta_{\varsigma} ) \}^{\pm}, 
 (\boldsymbol \varphi,  \varphi) \right\rangle 
 &\leq  \frac{|s|}{\underline{\sigma}} \triple{{\bf u}_{\lambda}}_{|s|, \Omega^{\mp}} \triple{{\bf \tilde{v}}}_{ 1, \Omega^{\mp}} +\frac{ \sqrt{|s|}}{\sqrt{\underline{\sigma}}} \triple{\theta_{\varsigma} }_{|s|,\Omega^{\mp}} \triple{ \tilde{ v }}_{1, \Omega^{\mp}} \\
&\quad + \frac{\gamma }{\sqrt{|s|}} \triple{\theta_{\varsigma}}_{|s|,\Omega^{\mp}} \triple{ \tilde{\mathbf{v}}}_{1, \Omega^{\mp}} + |s| ~\eta~ \triple{{\bf u}_{\lambda} }_{|s|, \Omega^{\mp}} ~ \triple{\tilde{ v} }_{1, \Omega^{\mp}}\\
&\leq \left(\frac{|s|}{\underline{\sigma}} +\frac{\gamma}{\sqrt {|s|}}+ \eta |s| \right) \triple{({\bf u}_{\lambda}, \theta_{\varsigma})}_{|s|, \Omega^{\mp}}~ \triple{(\tilde{\bf v}, \tilde{ v})}_{1, \Omega^{\mp}}.
\end{align*}
We now estimate the coefficient $ \left(\frac{|s|}{\underline{\sigma}} +\frac{\gamma}{\sqrt{|s|}}+ \eta |s| \right) $ separately as follows:
\begin{align*}
 \left(\frac{|s|}{\underline{\sigma}} +\frac{\gamma}{\sqrt {|s|}}+ \eta |s| \right)
&\leq \max \{ 1, \gamma, \eta\} \left( \frac{|s|}{\underline{\sigma}} + \frac{1}{\sqrt{|s|}} + |s| \right) \\
&\leq \max \{ 1, \gamma, \eta\} \frac{|s|}{\underline{\sigma}} \left( 1 +  \frac{1}{ \sqrt {|s|} } + 1 \right) \\ 
&\leq \max \{ 1, \gamma, \eta\} \frac{|s|}{\underline{\sigma}} \left( 1 +  \frac{1}{ \sqrt {\underline{\sigma}} } + 1 \right) \leq c ~ \frac{|s|}{\underline{\sigma}^{3/2}}
\end{align*}
where $c:= 3 \max \{ 1, \gamma, \eta\} $. This yields
\begin{align}\label{eq:4.30}
 \|\mathcal{R}_N ({\bf u}_{\lambda}, \theta_{\varsigma} )\|_ {{\bf H}^{-1/2} (\Gamma)}
 &\leq c~ \frac{|s|} {\underline{\sigma}^{3/2} }\triple{({\bf u}_{\lambda}, \theta_{\varsigma})}_{|s|, \Omega^{\mp}},
 \end{align}
which in combination with \eqref{eq:4.28} implies that
\begin{equation}\label{eq:4.31}
 \|( \boldsymbol{\lambda}, \varsigma) \|^2_{ {\bf H}^{-1/2} (\Gamma) } \leq ~c~ \frac{|s|^2} {{\underline{\sigma}}^3} 
 \triple{( {\bf u}_{\lambda}, \theta_{\varsigma} )}^2_{|s|, \mathbb{R}^3\setminus \Gamma}.
\end{equation}
As consequence, the boundary operators are coercive and we have:
\begin{lemma} \label{le:4.5}
There hold the coercivity estimates
\begin{align*}
 \left|\left\langle (\boldsymbol{\lambda},\varsigma), \overline{\mathcal{V}(s)(\boldsymbol{\lambda}, \varsigma)} \right\rangle_{\Gamma}\right| 
&\geq c_V \frac{\sigma\underline{\sigma}^5} {|s|^4} 
\|( \boldsymbol{\lambda}, \varsigma) \|^2_{ {\bf H}^{-1/2} (\Gamma) } \quad \mbox{for all} \quad (\boldsymbol{\lambda},\varsigma) \in {\bf H}^{-1/2} (\Gamma), \\ 
\left|\left\langle \mathcal{W}(s)(\boldsymbol{\phi},\varphi), \overline{(\boldsymbol{\phi}, \varphi )} \right\rangle_{\Gamma}\right| 
& \geq c_W \frac{\sigma\underline{\sigma}^4}{|s|^2} \| (\boldsymbol{\phi}, \varphi) \|^2_{{\bf H}^{1/2} (\Gamma)}
\quad \mbox{for all} \quad (\boldsymbol{\phi}, \varphi) \in \mathbf{H}^{+1/2} (\Gamma),
\end{align*}
where $c_V $ and $ c_W$ are positive constants depending only on the geometry and the parameters $\gamma$ and $\eta$.
These coercivity estimates ensure the invertibility of $\mathcal{V}(s)$ and $\mathcal{W}(s)$ by a Lax-Milgram argument and provide a bound for the inverse of $\mathcal{V}(s)$ and $\mathcal{W}(s)$ such that 
 \begin{align}
\|\mathcal{V}^{-1}(s) \|_{ \mathbf{H}^{1/2}(\Gamma) \rightarrow \mathbf{H}^{-1/2}(\Gamma)} \leq c^{-1}_V 
 \frac {|s|^4} {\sigma\underline{\sigma}^5 }, \label{eq:4.34}\\
\|\mathcal{W}^{-1}(s) \|_{ \mathbf{H}^{-1/2}(\Gamma) \rightarrow \mathbf{H}^{+1/2}(\Gamma)} \leq 
 c^{-1}_W \frac{|s|^2} {\sigma\underline{\sigma}^4} . \label{eq:4.35}
 \end{align}
\end{lemma}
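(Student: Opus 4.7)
The plan is to upgrade the $Z(s)$-modified variational coercivities already derived in \eqref{eq:4.16} (for $\mathcal{V}(s)$) and \eqref{eq:4.24} (for $\mathcal{W}(s)$) into lower bounds in the boundary Sobolev norms, and to extract invertibility and the resolvent bounds \eqref{eq:4.34}, \eqref{eq:4.35} from there by a Lax--Milgram argument on the modified forms together with the multiplier bound $\|Z(s)\|_{\max}\lesssim |s|/\underline{\sigma}$.

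For $\mathcal{V}(s)$ I would start from \eqref{eq:4.16} and replace the energy norm on its right-hand side by the density norm via the reverse of \eqref{eq:4.31}, namely
$$\triple{(\mathbf{u}_\lambda,\theta_\varsigma)}^2_{|s|, \mathbb{R}^3\setminus\Gamma} \;\geq\; c\,\frac{\underline{\sigma}^3}{|s|^2}\,\|(\boldsymbol{\lambda},\varsigma)\|^2_{\mathbf{H}^{-1/2}(\Gamma)\times H^{-1/2}(\Gamma)}.$$
This produces coercivity of the sesquilinear form $((\boldsymbol{\lambda},\varsigma),(\boldsymbol{\mu},\tau))\mapsto \langle Z(s)(\boldsymbol{\lambda},\varsigma),\overline{\mathcal{V}(s)(\boldsymbol{\mu},\tau)}\rangle_\Gamma$ on the diagonal with constant proportional to $\sigma\underline{\sigma}^4/|s|^3$. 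A Cauchy--Schwarz step in the $\mathbf{H}^{-1/2}$--$\mathbf{H}^{1/2}$ duality pairing, combined with $\|Z(s)\|_{\max}\lesssim|s|/\underline{\sigma}$, then yields
$$\|\mathcal{V}(s)(\boldsymbol{\lambda},\varsigma)\|_{\mathbf{H}^{1/2}\times H^{1/2}} \;\geq\; c_V\,\frac{\sigma\underline{\sigma}^5}{|s|^4}\,\|(\boldsymbol{\lambda},\varsigma)\|_{\mathbf{H}^{-1/2}\times H^{-1/2}},$$
which is equivalent (via the duality pairing tested against $(\boldsymbol{\lambda},\varsigma)$) to the stated coercivity of $\mathcal{V}(s)$ and is the reciprocal of \eqref{eq:4.34}. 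Lax--Milgram applied to the coercive modified form supplies surjectivity and hence invertibility of $\mathcal{V}(s)$.

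The argument for $\mathcal{W}(s)$ is structurally identical. Starting from \eqref{eq:4.24}, the trace theorem in the form $\|(\boldsymbol{\phi},\varphi)\|^2_{\mathbf{H}^{1/2}\times H^{1/2}}\leq C_\Gamma^{2}\,\triple{(\mathbf{u}_\phi,\theta_\varphi)}^2_{|1|,\mathbb{R}^3\setminus\Gamma}$ converts the energy norm on the right-hand side into the trace norm of $(\boldsymbol{\phi},\varphi)$, producing coercivity of the $Z(s)$-modified form on the diagonal with constant proportional to $\sigma\underline{\sigma}^3/|s|$. The same Cauchy--Schwarz step using $\|Z(s)\|_{\max}\lesssim|s|/\underline{\sigma}$ then gives
$$\|\mathcal{W}(s)(\boldsymbol{\phi},\varphi)\|_{\mathbf{H}^{-1/2}\times H^{-1/2}} \;\geq\; c_W\,\frac{\sigma\underline{\sigma}^4}{|s|^2}\,\|(\boldsymbol{\phi},\varphi)\|_{\mathbf{H}^{1/2}\times H^{1/2}},$$
which is the stated coercivity of $\mathcal{W}(s)$ and the reciprocal of \eqref{eq:4.35}.

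The principal technical point here is not any single calculation but the careful bookkeeping of the exponents of $|s|$ and $\underline{\sigma}$ accumulated at each step, and the clean passage from the coercivity of the $Z(s)$-modified forms to statements about $\mathcal{V}(s)$ and $\mathcal{W}(s)$ alone. This passage is precisely where the multiplier bound $\|Z(s)\|_{\max}\lesssim|s|/\underline{\sigma}$ enters and contributes the additional factor of $\underline{\sigma}/|s|$ visible in the final exponents of \eqref{eq:4.34} and \eqref{eq:4.35}.
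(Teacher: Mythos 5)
Your proposal follows essentially the same route as the paper's own argument: it upgrades the $Z(s)$-weighted coercivity estimates \eqref{eq:4.16} and \eqref{eq:4.24} to boundary-norm lower bounds via (the rearrangement of) \eqref{eq:4.31} for $\mathcal{V}(s)$ and via the trace theorem for $\mathcal{W}(s)$, then removes the multiplier with $\|Z(s)\|_{\max}\lesssim |s|/\underline{\sigma}$ through a Cauchy--Schwarz step and invokes Lax--Milgram, reproducing the exponents of \eqref{eq:4.34}--\eqref{eq:4.35} exactly as in the text preceding the lemma. The only quibble is your claim that the lower bound on $\|\mathcal{V}(s)(\boldsymbol{\lambda},\varsigma)\|_{\mathbf{H}^{1/2}(\Gamma)}$ is ``equivalent'' to coercivity of the unweighted pairing (it is implied by, but weaker than, that coercivity); this mirrors the paper's own looseness in dropping $Z(s)$ from the stated estimates and does not affect the invertibility or the inverse bounds.
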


The next lemma concerns the bounds for the operators $\mathcal{S}(s), \mathcal{V}(s),  \mathcal{D}(s) $ and $\mathcal{W}(s)$.
\begin{lemma}\label{le:4.6} For all $s \in \mathbb{C}_{+}$ the following estimates hold 
\begin{align*}
(a)\quad \|  \mathcal{S} (s) \|_{ {\bf H}^{-1/2} (\Gamma) \rightarrow {\bf H}^1(\mathbb{R}^3\setminus \Gamma)} \leq c  \frac{|s|^2}{\sigma\underline{\sigma}^4}, & \qquad (b)\quad 
\|\mathcal{V}(s) \|_{ {\bf H}^{-1/2} (\Gamma) \rightarrow {\bf H}^{1/2}(\Gamma) } \leq c \frac{|s|^2}{\sigma\underline{\sigma}^4 }, \\
(c)\quad \|  \mathcal{D} (s) \|_{ {\bf H}^{1/2} (\Gamma) \rightarrow {\bf H}^1(\mathbb{R}^3\setminus \Gamma)} \leq c ~\frac{|s|^3}{\sigma\underline{\sigma}^{9/2}}, & \qquad (d)\quad 
\| \mathcal{W}(s) \|_{ {\bf H}^{1/2} (\Gamma) \rightarrow {\bf H}^{-1/2}(\Gamma) } \leq c \frac{|s|^4}{\sigma\underline{\sigma}^5 }.
\end{align*}
where $c$ represents a generic positive constant depending on $\Gamma$ and the parameters $\gamma$ and $\eta$. 
\end{lemma}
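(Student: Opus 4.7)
The plan is to derive all four bounds from the variational formulations of the transmission problems in Propositions \ref{pro:4.1} and \ref{pro:4.3}, together with the coercivity estimate \eqref{eq:4.16}, the norm equivalences \eqref{eq:4.6}, the trace theorem, and the pointwise bound $\|Z(s)\|_{\max} \lesssim |s|/\underline{\sigma}$. Bounds (b) and (d) for the boundary integral operators will fall out of the volume bounds (a) and (c) respectively, so the work essentially lies in proving (a) and (c).

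For (a), I would test the variational identity \eqref{eq:4.13} with $(\mathbf{v}, v) = (\mathbf{u}_\lambda, \theta_\varsigma) = \mathcal{S}(s)(\boldsymbol\lambda,\varsigma)$. Taking real parts and applying \eqref{eq:4.16} bounds the left-hand side below by $c(\sigma\underline{\sigma}/|s|)\triple{(\mathbf{u}_\lambda,\theta_\varsigma)}^2_{|s|,\mathbb{R}^3\setminus\Gamma}$. The right-hand side is controlled by $\|Z(s)\|_{\max} \lesssim |s|/\underline{\sigma}$, the duality pairing on $\Gamma$, and the trace theorem; an additional factor of $1/\underline{\sigma}$ appears when the $H^1$-based Sobolev trace bound is converted back to the energy norm via \eqref{eq:4.6}. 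Solving the resulting quadratic inequality gives an energy-norm estimate on $(\mathbf{u}_\lambda,\theta_\varsigma)$, and one final use of \eqref{eq:4.6} to pass to the $H^1$ norm produces (a). Bound (b) then follows by composing (a) with the continuous trace map $\mathbf{H}^1(\mathbb{R}^3\setminus\Gamma)\times H^1(\mathbb{R}^3\setminus\Gamma) \to \mathbf{H}^{1/2}(\Gamma)\times H^{1/2}(\Gamma)$, since $\mathcal{V}(s) = \ave{\mathcal{R}_D \mathcal{S}(s)}$.

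For (c), the proof of Proposition \ref{pro:4.3} already contains the needed estimate: it shows $\triple{(\mathbf{w},\varrho)}_{1,\mathbb{R}^3\setminus\Gamma} \leq c(|s|^3/(\sigma\underline{\sigma}^{9/2}))\,\triple{\widetilde{(\boldsymbol\phi,\varphi)}}_{1,\mathbb{R}^3\setminus\Gamma}$, and choosing $\widetilde{(\boldsymbol\phi,\varphi)}$ as a continuous lifting of $(\boldsymbol\phi,\varphi)$ from $\mathbf{H}^{1/2}\times H^{1/2}$ to $\mathbf{H}^1\times H^1$, together with $\mathcal{D}(s)(\boldsymbol\phi,\varphi) = \widetilde{(\boldsymbol\phi,\varphi)} + (\mathbf{w},\varrho)$, produces (c). For (d), I would apply the trace-type bound \eqref{eq:4.30} to $(\mathbf{u}_\phi,\theta_\varphi) = \mathcal{D}(s)(\boldsymbol\phi,\varphi)$, using the energy-norm estimate for $(\mathbf{u}_\phi,\theta_\varphi)$ \emph{before} the final conversion to $H^1$ that appears in the derivation of (c) (so saving a factor of $\underline{\sigma}$); combined with the factor $|s|/\underline{\sigma}^{3/2}$ contributed by \eqref{eq:4.30}, this produces the claimed exponent $|s|^4/(\sigma\underline{\sigma}^5)$. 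The main technical hurdle is keeping the powers of $|s|$ and $\underline{\sigma}$ straight: each application of \eqref{eq:4.6} to the scalar temperature contributes $\sqrt{\underline{\sigma}}$ rather than $\underline{\sigma}$, and one must decide carefully at which step to transition between the energy and Sobolev norms so as not to waste a power of $\underline{\sigma}$ in the final exponent.
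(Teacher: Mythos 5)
Your proposal is correct, and for parts (a) and (b) it is essentially the paper's own argument: test \eqref{eq:4.13} with the simple-layer potential itself, bound below via \eqref{eq:4.16}, bound above via $\|Z(s)\|_{\max}\lesssim |s|/\underline{\sigma}$, duality and the trace theorem, convert between the $\triple{\cdot}_{|s|}$ and $\triple{\cdot}_{1}$ norms with \eqref{eq:4.6}, and obtain (b) from (a) by taking traces. Where you diverge is in (c) and (d). The paper proves both at once from the chain \eqref{eq:BoundForW}: it starts from the coercivity relation \eqref{eq:4.24} for the hypersingular operator, bounds the duality pairing by $\|\mathcal{W}(s)(\bm\phi,\varphi)\|_{\mathbf H^{-1/2}(\Gamma)}\|(\bm\phi,\varphi)\|_{\mathbf H^{1/2}(\Gamma)}$, uses $\mathcal{W}(s)(\bm\phi,\varphi)=\mathcal{R}_N(\mathbf u_\phi,\theta_\varphi)|_\Gamma$ together with \eqref{eq:4.30}, and solves the resulting inequality for $\triple{(\mathbf u_\phi,\theta_\varphi)}_{|s|,\mathbb R^3\setminus\Gamma}$; (c) and (d) then drop out of the same string of estimates. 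You instead recycle the stability estimate already proved inside Proposition \ref{pro:4.3} (the bound for $(\mathbf w,\varrho)$ in terms of the extension $\widetilde{(\bm\phi,\varphi)}$), choose the extension as a bounded lifting of the jump data, and write $\mathcal D(s)(\bm\phi,\varphi)=\widetilde{(\bm\phi,\varphi)}+(\mathbf w,\varrho)$; for (d) you then apply \eqref{eq:4.30} to the energy-norm version of that estimate. This works and reproduces the stated exponents (indeed, the intermediate energy bound it gives is never worse than the paper's), provided you make explicit the two small points you glossed over: the lifting must be chosen with $\triple{\widetilde{(\bm\phi,\varphi)}}_{1,\mathbb R^3\setminus\Gamma}\le C_\Gamma\|(\bm\phi,\varphi)\|_{\mathbf H^{1/2}(\Gamma)}$ (a right inverse of the jump, e.g.\ lifting into $\Omega^-$ and extending by zero), and the lifting term in the triangle inequality must be absorbed using $1\le |s|^3/(\sigma\underline\sigma^{9/2})$ for all $s\in\mathbb C_+$, which indeed holds. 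The trade-off is that the paper's route stays entirely on the boundary once \eqref{eq:4.24} is in hand and delivers (c) and (d) from a single chain, while yours avoids \eqref{eq:4.24} altogether for (c), at the price of the extra lifting bookkeeping and of still needing \eqref{eq:4.30} for (d).
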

\begin{proof}
 (a) From \eqref{eq:4.16} we know that
\[
\frac{\sigma\underline{\sigma}}{|s|} \triple{(\mathbf{u}_{\lambda}, \theta_{\varsigma})}^2_{|s|, \mathbb{R}^3\setminus \Gamma}
 \leq \mathrm{Re} \left( \left\langle Z(s)(\boldsymbol{\lambda},\varsigma ), \overline{\mathcal{V}(s)(\boldsymbol{\lambda},\varsigma )} \right\rangle_{\Gamma} \right)
 \leq c\frac{|s|}{\underline{\sigma}} \|( \boldsymbol{\lambda}, \varsigma) \|_{ {\bf H}^{-1/2} (\Gamma) }
 \triple{( {\bf u}_{\lambda}, \theta_{\varsigma} ) }_{1, \mathbb{R}^3\setminus \Gamma}.
 \]
It then follows from \eqref{eq:4.6} and the definition of $ \mathcal{S}(s)$, that
\[ 
 \triple{\mathcal{S}(s)(\boldsymbol{\lambda}, \varsigma)}_{1, \mathbb{R}^3\setminus \Gamma}= \triple{(\mathbf{u}_{\lambda}, \theta_{\varsigma})}_{1, \mathbb{R}^3\setminus \Gamma}
\leq c~ \frac{|s|^2}{\sigma\underline{\sigma}^4} \|( \boldsymbol{\lambda}, \varsigma) \|_{ {\bf H}^{-1/2} (\Gamma) }.
\]
This yields the desired bound for $ \mathcal{S}(s)$:
\[
\|\mathcal{S}(s)\|_{\mathbf{H}^{-1/2} (\Gamma) \rightarrow {\mathbf H}^1(\mathbb{R}^3\setminus \Gamma)}  \leq c \frac{|s|^2}{\sigma\underline{\sigma}^4} .
 \]
The above inequality also implies the the estimate (b), since 
 \[
 \|\mathcal{V}(s) ( \boldsymbol{\lambda}, \varsigma) \|_{\mathbf{H}^{1/2}(\Gamma)} = \|  \mathcal{S}(s)( \boldsymbol{\lambda}, \varsigma) |_\Gamma\|_{\mathbf{H}^{1/2}(\Gamma)}\leq c \triple{\mathcal{S}(s)(\boldsymbol{\lambda}, \varsigma)}_{1, \mathbb{R}^3\setminus \Gamma}\leq c~ \frac{|s|^2}{\sigma\underline{\sigma}^4} \|( \boldsymbol{\lambda}, \varsigma) \|_{ {\bf H}^{-1/2} (\Gamma) }.
 \]
 Proofs of (c) and (d) follow in the same manner. From (\ref{eq:4.24}), we see that 
 \begin{align}
\frac{\sigma \underline{\sigma} }{|s|}\triple{( \mathbf u_{\phi}, \theta_{\varphi})}^2_{|s|, \mathbb{R}^3\setminus \Gamma}&\leq
\mathrm{Re} \left( \left\langle Z(s) \mathcal{W}(\boldsymbol{\phi},\varphi), \overline{(\boldsymbol{\phi}, \varphi )} \right\rangle_{\Gamma} \right) \nonumber \\
&\leq \frac{|s|}{\underline{\sigma}}\, \|\mathcal{W}(s)(\boldsymbol{\phi} ,\varphi)\|_{{\bf H}^{-1/2}(\Gamma)} ~ \|{(\boldsymbol{\phi}, \varphi )} \|_{{\bf H}^{1/2}(\Gamma)} \nonumber \\
&= \frac{|s|}{\underline{\sigma}}\,\, \| \mathcal{R}_N ({\bf u}_{\phi}, \theta_{\varphi} )\|_ {{\bf H}^{-1/2} (\Gamma)} \|{(\boldsymbol{\phi}, \varphi )} \|_{{\bf H}^{1/2}(\Gamma)} \nonumber \\
 &\leq \frac{|s|^2} {\underline{\sigma}^{5/2}} \triple{({\bf u}_{\phi}, \theta_{\varphi})}_{|s|, \mathbb{R}^3\setminus \Gamma} ~\|{(\boldsymbol{\phi}, \varphi )} \|_{{\bf H}^{1/2}(\Gamma)}, \label{eq:BoundForW}
 \end{align}
the last inequality follows from (\ref{eq:4.30}) with $({\bf u}_{\lambda}, \theta_{\varsigma} )$ replaced by $ (\mathbf u_{\bm \phi}, \theta_{\varphi}) $. Combining the result above with the equivalence relations \eqref{eq:4.6} yields
\[
 \triple{\mathcal{D}(s)(\boldsymbol{\phi}, \varphi )}_{1, \mathbb{R}^3\setminus \Gamma} = \triple{ ( \mathbf u_{\bm \phi}, \theta_{\varphi})}_{1, \mathbb{R}^3\setminus \Gamma}
 \leq c ~\frac{|s|^3}{\sigma\underline{\sigma}^{9/2}} \|{(\boldsymbol{\phi}, \varphi )}\|_{{\bf H}^{1/2}(\Gamma)},
\]
which leads to the desired bound in (c). In order to prove (d), notice that from the sequence of inequalities in \eqref{eq:BoundForW} it follows that:
\begin{align*}
\|\mathcal{W}(s)(\boldsymbol{\phi} ,\varphi)\|^2_{\mathbf{H}^{-1/2}(\Gamma)} 
&\leq c~ \frac{|s|^2} {\underline{\sigma}^3} \triple{({\bf u}_{\bm \phi}, \theta_{\varphi})}^2_{|s|, \mathbb{R}^3\setminus \Gamma}\\
&\leq c~ \frac{|s|^3} {\sigma \underline{\sigma}^4} \mathrm{Re} \left( \left\langle Z(s) \mathcal{W}(s)(\boldsymbol{\phi},\varphi), \overline{(\boldsymbol{\phi}, \varphi )} \right\rangle_{\Gamma} \right) \\
&\leq c~ \frac{|s|^4} {\sigma\underline{\sigma}^5} \|\mathcal{W}(s)(\boldsymbol{\phi} ,\varphi)\|_{{\bf H}^{-1/2}(\Gamma)} ~\|(\boldsymbol{\phi}, \varphi ) \|_{{\bf H}^{1/2}(\Gamma)}.
\end{align*}
This leads to the bound for $\mathcal{W}(s)$ in (d).
\end{proof}
For completeness we conclude the section with bounds for the boundary integral operators $\mathcal{K} (s) := \ave{\mathcal{R}_D \mathcal{D} (s)}$ and $\mathcal{K}^{\prime}(s):=\ave{\mathcal{R}_N  \mathcal{S} (s)}$ which are contained in the following lemma.
\begin{lemma} 
For all $s \in \mathbb{C}_{+}$, there hold the bounds for the operators:
\begin{subequations}\label{eq:BoundsForK}
\begin{align}
\label{eq:BoundsForKa}
\|\pm \tfrac{1}{2}\mathcal{ I} + \mathcal{K}^\prime(s) \|_{ \mathbf{ H}^{-1/2} (\Gamma) \rightarrow \mathbf{ H}^{-1/2} (\Gamma)}
& \leq c\, \frac{|s|^4}{\sigma\underline{\sigma}^{7}},\\
\label{eq:BoundsForKb} 
\|\mp \tfrac{1}{2}\mathcal{ I} + \mathcal{K}(s) \|_{ \mathbf{ H}^{1/2} (\Gamma)\rightarrow 
 \mathbf{ H}^{1/2} (\Gamma)}
& \leq c\, \frac{|s|^3} {\sigma\underline{\sigma}^{9/2}}.
\end{align}
\end{subequations}
\end{lemma}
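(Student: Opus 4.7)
The strategy is to recognize both operators as one-sided traces of the layer potentials and then invoke the mapping bounds already available in Lemma \ref{le:4.6}. From Proposition \ref{pro:4.1} we have the jump relation $\jump{\mathcal{R}_N \mathcal{S}(s)(\boldsymbol{\lambda},\varsigma)} = (\boldsymbol{\lambda},\varsigma)$ together with $\ave{\mathcal{R}_N \mathcal{S}(s)} = \mathcal{K}'(s)$, which gives
\[
\bigl(\pm\tfrac{1}{2}\mathcal{I} + \mathcal{K}'(s)\bigr)(\boldsymbol{\lambda},\varsigma) \;=\; \bigl\{\mathcal{R}_N \mathcal{S}(s)(\boldsymbol{\lambda},\varsigma)\bigr\}^{\mp}.
\]
Similarly, Proposition \ref{pro:4.3} gives $\jump{\mathcal{D}(s)(\boldsymbol{\phi},\varphi)} = (\boldsymbol{\phi},\varphi)$, and since $\mathcal{K}(s) = \ave{\mathcal{R}_D \mathcal{D}(s)}$,
\[
\bigl(\mp\tfrac{1}{2}\mathcal{I} + \mathcal{K}(s)\bigr)(\boldsymbol{\phi},\varphi) \;=\; \bigl\{\mathcal{R}_D \mathcal{D}(s)(\boldsymbol{\phi},\varphi)\bigr\}^{\pm} \;=\; \mathcal{D}(s)(\boldsymbol{\phi},\varphi)\big|_{\Gamma^{\pm}}.
\]
With these identifications in hand, each bound reduces to estimating a trace of the associated potential in the appropriate Sobolev norm, and the $s$-dependence will be tracked through the energy-norm equivalences \eqref{eq:4.6}.

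\textbf{Estimate \eqref{eq:BoundsForKa}.} Setting $(\mathbf{u}_\lambda,\theta_\varsigma):=\mathcal{S}(s)(\boldsymbol{\lambda},\varsigma)$, I would apply the trace-type bound \eqref{eq:4.30} on the interior or exterior side to get
\[
\bigl\|\bigl(\pm\tfrac{1}{2}\mathcal{I} + \mathcal{K}'(s)\bigr)(\boldsymbol{\lambda},\varsigma)\bigr\|_{\mathbf{H}^{-1/2}(\Gamma)} \;\leq\; c\,\frac{|s|}{\underline{\sigma}^{3/2}}\,\triple{(\mathbf{u}_\lambda,\theta_\varsigma)}_{|s|,\Omega^{\mp}}.
\]
Converting from the $|s|$-energy norm to the standard $\mathbf{H}^1$ norm through \eqref{eq:4.6} introduces an additional factor $|s|/\underline{\sigma}^{3/2}$, and then the continuity bound Lemma \ref{le:4.6}(a) for $\mathcal{S}(s)$ contributes $|s|^2/(\sigma\underline{\sigma}^4)$. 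Multiplying these factors gives the claimed $|s|^4/(\sigma\underline{\sigma}^{7})$.

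\textbf{Estimate \eqref{eq:BoundsForKb}.} This is shorter: taking the one-sided trace of $\mathcal{D}(s)(\boldsymbol{\phi},\varphi)$, the standard trace theorem on Lipschitz domains yields
\[
\bigl\|\bigl(\mp\tfrac{1}{2}\mathcal{I} + \mathcal{K}(s)\bigr)(\boldsymbol{\phi},\varphi)\bigr\|_{\mathbf{H}^{1/2}(\Gamma)} \;\leq\; c\,\triple{\mathcal{D}(s)(\boldsymbol{\phi},\varphi)}_{1,\mathbb{R}^3\setminus\Gamma},
\]
and the desired bound follows directly from Lemma \ref{le:4.6}(c), with no further norm equivalence required beyond the one already absorbed into that estimate.

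\textbf{Main obstacle.} None of these steps require a new idea; the work is almost entirely bookkeeping. The only genuinely delicate point is estimate \eqref{eq:BoundsForKa}, where three sources of $s$-dependence compound: the trace inequality \eqref{eq:4.30}, the equivalence \eqref{eq:4.6} used to switch energy norms, and the mapping bound for $\mathcal{S}(s)$. Keeping track of the exponents carefully so as to land on exactly $|s|^4/(\sigma\underline{\sigma}^7)$ is the place where a slip would spoil the later convolution quadrature estimates, so this is where I would be most careful.
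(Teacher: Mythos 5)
Your argument is correct and follows essentially the same route as the paper: both identify $\pm\tfrac{1}{2}\mathcal{I}+\mathcal{K}'(s)$ and $\mp\tfrac{1}{2}\mathcal{I}+\mathcal{K}(s)$ with one-sided traces of $\mathcal{R}_N\mathcal{S}(s)$ and $\mathcal{R}_D\mathcal{D}(s)$, then combine the dual-norm bound \eqref{eq:4.30}, the norm equivalence \eqref{eq:4.6}, and the potential bounds of Lemma \ref{le:4.6}(a) and (c). The exponent bookkeeping you outline ($\tfrac{|s|}{\underline{\sigma}^{3/2}}\cdot\tfrac{|s|}{\underline{\sigma}^{3/2}}\cdot\tfrac{|s|^2}{\sigma\underline{\sigma}^{4}}$ for the first bound, and the trace theorem plus Lemma \ref{le:4.6}(c) for the second) reproduces the paper's computation exactly.
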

\begin{proof}
Let $ ({\bm{\lambda}}, \varsigma) \in {\bf H}^{-1/2}(\Gamma) \times H^{-1/2}(\Gamma) $ and consider the simple-layer potential defined by $({\bf u}_{\lambda}, \theta_{\varsigma}) :=  \mathcal{S} (s)(\boldsymbol{\lambda},\varsigma) \quad \mbox{in} \quad \Omega^- ( \text{or} ~ \Omega^+)$. Then from \eqref{eq:4.30}, and the estimate for $\mathcal S(s)$ in Lemma \ref{le:4.6} (a) we obtain 
\begin{align*}
\|( \pm \tfrac{1}{2}\mathcal{ I} + \mathcal{K}^\prime(s))(\boldsymbol{\lambda}, \varsigma) \|_{ \mathbf{ H}^{-1/2} (\Gamma)}
& \leq c~ \frac{|s|} {\underline{\sigma}^{3/2}}\triple{\mathcal{S}(s)(\boldsymbol \lambda, \varsigma)}_{|s|, \Omega^{\mp}} \\
& \leq c~ \frac{|s|^2} {\underline{\sigma}^{3}} \triple{\mathcal{S}(s)(\boldsymbol \lambda, \varsigma)}_{1, \Omega^{\mp}}
\leq c~ \frac{|s|^4}{\sigma\underline{\sigma}^{7}} \|( \boldsymbol{\lambda}, \varsigma) \|_{ {\bf H}^{-1/2} (\Gamma) }.
\end{align*}
which shows \eqref{eq:BoundsForKa}.  Now let $ ({\bm{\phi}}, \varphi) \in {\bf H}^{1/2}(\Gamma) \times H^{1/2}(\Gamma) $ and consider the double-layer potential defined by $({\bf u}_{\phi}, \theta_{\varphi}) :=  \mathcal{D} (s)(\bm\phi, \varphi)$. Then from Lemma \ref{le:4.6} (c) 
\[
\|( \mp \tfrac{1}{2}\mathcal{ I} + \mathcal{K}(s))({\bm{\phi}}, \varphi) \|_{ \mathbf{ H}^{1/2} (\Gamma)}
\leq c\,\triple{\mathcal{D}(s)(\bm\phi, \varphi)}_{1, \Omega^{\mp}}
\leq c\,\frac{|s|^3}{\sigma\underline{\sigma}^{9/2}} \|(\bm{\phi}, \varphi )\|_{{\bf H}^{1/2}(\Gamma)},
\]
which leads to the bound \eqref{eq:BoundsForKb}.
\end{proof}
%
\section{Results in the time domain}\label{sec:TimeDomain}
%
We now return to the time domain. As basic model problems, we consider the initial -Dirichlet and the initial-Neumann boundary value problems for the elastic displacement field ${\bf U}(x,t)$ 
and temperature filed ${\Theta}(x,t)$ governed by the linear thermo-elasto-dynamic equations \eqref{eq:2.1} and \eqref{eq:2.2} in $Q_T := \{ (x,t) : x \in \Omega^+, t \in (0, T] \}$ satisfying the homogeneous initial conditions
\begin{gather}
 {\bf U}(x,t) = {\bf 0}, ~ {\bf U_t}(x,t )= {\bf 0}, \quad \mbox{and} \quad {\Theta}(x,t) = 0 
 \quad \mbox{for} \quad -\infty < t \leq 0, x \in \Omega^+. \nonumber
\end{gather}
together with either
\begin{description}
\item (a) Dirichlet boundary conditions: 
\[
{\bf U}(x,t) = {\bf F}(x,t), \quad \mbox{and} \quad \Theta(x,t) = F(x,t)\quad \mbox{ on } \quad \Gamma_T := \Gamma \times (0, T\,], 
\]
\item (b) Neumann boundary conditions:
\[
\boldsymbol{\sigma} ( {\bf U}, \Theta ) {\bf n} = {\bf G}(x,t) \quad\mbox{ and}\quad 
\frac{\partial}{\partial n} \Theta (x,t) = G(x,t) \quad \mbox{on}\quad \Gamma_T.
\]
\end{description}
where ${\bf F}(x,t)$ and $F(x,t)$ as well as ${\bf G}(x,t)$ and $G(x,t)$ are given smooth functions. 
Since these are exterior problems, it is natural to construct solutions of the problems by using boundary integral methods. This means we first need a fundamental solution for the time-dependent problem (\ref{eq:2.1}) and (\ref{eq:2.2}). 
However, the corresponding fundamental solution may not be available in general for time dependent equations, or may be considerably more difficult to construct and to handle computationally than those of time-independent equations. This leads us to consider the problems by using Lubich's approach \cite{Lu:1994} in the Laplace domain which does not require the availability of fundamental solutions for the time-dependent equations. 

As in Section \ref{sec:GoverningEquations}, let us denote
\[
\mathbf{u}:= \mathbf{u}(x,s)= \mathcal{L}\{{\bf U}(x,t)\}, \quad \text{and}\quad  \theta:=\theta(x,s)= \mathcal L\{\Theta(x,t)\}.
\]
Then the model problems in the Laplace domain read: Find the solution pair $(\mathbf{u}, \theta)$ satisfying the system of partial differential equations 
\begin{equation} \label{eq:5.4}
{\bf B}(\partial_x , s) \begin{pmatrix}
{\mathbf u} \\
{\theta}\\
\end{pmatrix}
:= \left (
\begin{array} {ll}
  \Delta^* - \rho s^2 & -\gamma \;\nabla \\
  -\eta~ s\; \nabla^{\top} & \Delta -s/\kappa \\
 \end{array} \right )
 \begin{pmatrix}
{\mathbf u} \\
\theta 
\end{pmatrix}
= {\bf 0} \quad \mbox{in} \quad \quad \Omega^+ , 
\end{equation}
together with either 
(a) Dirichlet boundary conditions: 
\begin{align} 
\mathcal R_D(\mathbf{u}, \theta) = ({\bf f}(x,s), f (x,s) )\quad \mbox{on} \quad \Gamma, \label{eq:5.5}
\end{align}
or (b) Neumann boundary conditions:
\begin{align}
\mathcal{R}_N(\mathbf{u}, \theta) = ( {\bf g}(x,s) , g(x,s) ) \quad \mbox{on} \quad \Gamma. \label{eq:5.6}
\end{align}
Here ${\bf f}(x,s) $ and $f(x,s) $ are, respectively, the Laplace transforms of the functions of $ {\bf F}(x,t)$ and $F(x,t)$, while ${\bf g}(x,s) $ and $g(x,s) $ are the corresponding Laplace transforms of $ {\bf G}(x,t)$ and $G(x,t)$. 

Let us begin with the Dirichlet boundary value problem; we are interested here in the weak solutions $(\mathbf{u}, \theta) \in {\bf H}^1( \Omega^+) \times H^1(\Omega^+)$ of \eqref{eq:5.4} and \eqref{eq:5.5}. In the last section, it is shown that for given 
$({\bf f} (x,s), f(x,s) ) \in ({\bf H}^{-1/2}(\Gamma) \times H^{-1/2}(\Gamma))$, the problem has a unique solution in the form 
\[
(\mathbf{u}, \theta) =  \mathcal{S} (s)(\boldsymbol{\lambda}, 
 \varsigma) \in {\bf H}^1(\Omega^+) \times H^1(\Omega^+),
 \]
 where $(\boldsymbol{\lambda}, \varsigma) \in {\bf H}^{-1/2}(\Gamma) \times H^{-1/2}(\Gamma)$
 is the unique solution of the boundary integral equation of the first kind
 \[
 \mathcal{V}(s) ( \boldsymbol{\lambda}, \varsigma ) = ({\bf f}(x,s), f(x,s) ) \quad \mbox{on}\quad \Gamma. 
\]
From \eqref{eq:5.6} and \eqref{eq:5.5}, we obtain the solutions in terms of the given data such that
\begin{align*}
( \boldsymbol{\lambda}, \varsigma ) &= \mathcal{V}^{-1}(s)( {\bf f}(x,s), f(x,s) ) , \\
(\mathbf{u}, \theta)& =  \mathcal{S}(s)\mathcal{V}^{-1}(s) ({\bf f}(x,s), f(x,s) ).
\end{align*} 
In the time domain, the counterparts to these identities can be expressed in terms of the convolution of functions
(or operators) 
\begin{align*}
\mathcal{L}^{-1} ( \boldsymbol{\lambda}, \varsigma ) & = \mathcal{L}^{-1}\{\mathcal{V}^{-1}(s)\} \ast 
({\bf F}(x,t), F(x,t)), \\
 ({\bf U}(x,t), {\Theta(x,t)} ) &= \mathcal{L}^{-1} \{  \mathcal{S}(s) \circ\mathcal{V}^{-1}(s) \} \ast 
({\bf F}(x,t), F(x,t)).
\end{align*}
Next for the Neumann problem, we again seek a solution in terms of a double- layer potential in the form:
 \[
(\mathbf{u}, \theta) =  \mathcal{D} (s)(\boldsymbol{\phi}, 
 \varphi) \in {\bf H}^1(\Omega^+) \times H^1(\Omega^+).
 \]
It has been shown in the last section that $( \boldsymbol{\phi}, \varphi) \in {\bf H}^{1/2}(\Gamma) \times H^{1/2}(\Gamma)$ is the unique solution of the boundary integral equation of the first kind
\[
 \mathcal{W}(s) ( \boldsymbol{\phi}, \varphi ) = ({\bf g}(x,s), g(x,s) ) \quad \mbox{on}\quad \Gamma. 
\]
In the same manner as for the Dirichlet problem, we can express the solutions $( \boldsymbol{\phi}, \varphi )$
 and $(\mathbf{u}, \theta) $ in terms of the given data with the help of convolution of functions in the time domain
\begin{align*}
\mathcal{L}^{-1} \{( \boldsymbol{\phi}, \varphi )\} & = \mathcal{L}^{-1}\{\mathcal{W}^{-1}(s)\} \ast 
(\mathbf{G}(x,t), G(x,t)) , \\
 (\mathbf {U}(x,t), \Theta(x,t) ) &= \mathcal{L}^{-1} \{  \mathcal{D}(s) \circ\mathcal{W}^{-1}(s)\} \ast 
(\mathbf{G}(x,t), G(x,t)).  
\end{align*}
To this end, it is worth mentioning that an essential feature of the above approach is that estimates of properties of solutions in the time domain can be obtained \textit{without applying directly the Laplace inverse transform} to solutions in the transformed domain. In fact, with the properties of operators and solutions that we have obtained in the transformed domains, we are now in a position to estimate their corresponding properties in the time domain \cite{Lu:1994}.  A crucial result pertaining an admissible class of symbols  will be employed for this purpose.  Before presenting the result, however, we will have to introduce some notation.

For Banach spaces $X$ and $Y$, let $\mathcal{B}(X, Y)$ denote the set of bounded linear operators from $X$ to $Y$. Then we will say that analytic function $A : \mathbb{C}_+ \rightarrow \mathcal{B}(X, Y)$ belongs to the class $ \mathcal{A} (\mu, \mathcal{B}(X, Y))$, if  there exists  $\mu \in \mathbb{R}$ such that 
\[
\|A(s)\|_{X,Y} \le C_A(\left(\mathrm{Re} (s)\right) |s|^{\mu} \quad \mbox{for}\quad s \in \mathbb{C}_+ ,
\]
where $C_A : (0, \infty) \rightarrow (0, \infty) $ is a non-increasing function such that 
\[
C_A(\sigma) \le \frac{ c}{\sigma^m} , \quad \forall \quad \sigma \in ( 0, 1]
\]
for some $m \ge 0$. Our careful efforts on Section \ref{sec:BoundaryValueProblems} on bounding the Laplace-domain boundary integral operators in terms of the Laplace parameter $s$ and its real part $\sigma$ have been conducive to showing that they belong to an appropriate class of symbols. The point being that for symbols satisfying these properties we can apply the following result that relates the Laplace domain existence, uniqueness and stability results to their counterparts for the time-domain problem. The following statement is an improved version of  \cite[Proposition 3.2.2]{Sayas:2016}, the proof of which can be found in \cite{Sayas:2016errata}.

\begin{proposition}[\cite{Sayas:2016errata}]\label{pr:5.1}
Let $A = \mathcal{L}\{a\} \in \mathcal{A} \in (m + \mu, \mathcal{B}(X, Y))$ with $\mu\in [0, 1)$ and $m$ be a non-negative integer.  If $ g \in \mathcal{C}^{m+1}(\mathbb{R}, X)$ is causal and its derivative $g^{(m+2)}$ is integrable, then $a* g \in \mathcal{C}(\mathbb{R}, Y)$ is causal and 
\[
\| (a*g)(t) \|_Y \le 2^{\mu} C_{\epsilon} (t) C_A (t^{-1}) \int_0^1 \|(\mathcal{P}_2g^{(m)})(\tau) \|_X \; d\tau,
\]
where 
\[C_{\epsilon} (t) := \frac{1}{2\sqrt{\pi}} \frac{\Gamma(\epsilon/2)}{\Gamma\left( (\epsilon+1)/2 \right) } \frac{t^{\epsilon}}{(1+ t)^{\epsilon}}, \qquad (\epsilon :=  1- \mu)
\]
and 
\[
(\mathcal{P}_2g) (t) =  g + 2\dot{g} + \ddot{g}.
\]
\end{proposition}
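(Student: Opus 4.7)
The plan is to represent $(a*g)(t)$ as a Bromwich contour integral, shift the contour abscissa to $\sigma_0 = 1/t$, and transfer the polynomial growth $|s|^{m+\mu}$ of $A(s)$ onto derivatives of $g$ using the Laplace transform rule $\mathcal{L}\{g^{(k)}\}(s) = s^{k} G(s)$ for causal, sufficiently smooth $g$. The key algebraic step is the factorization
\[
A(s)\, G(s) \;=\; \frac{A(s)}{s^{m}(1+s)^{2}} \cdot \bigl(s^{m}(1+s)^{2}\, G(s)\bigr),
\]
where the right-hand factor equals $\mathcal{L}\{\mathcal{P}_{2} g^{(m)}\}(s)$ since $\mathcal{P}_{2}\partial_{t}^{m}$ has Laplace symbol $(1+s)^{2} s^{m}$, and the residual symbol satisfies the line estimate
\[
\left| \frac{A(s)}{s^{m}(1+s)^{2}} \right| \;\leq\; \frac{C_{A}(\sigma_0)\, |s|^{\mu}}{|1+s|^{2}},
\]
which is integrable along $\mathrm{Re}(s) = \sigma_{0}$ precisely because $\mu \in [0,1)$ (large-$|\tau|$ decay like $|\tau|^{\mu - 2}$).

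With this factorization in hand, I would express $(a*g)(t)$ as the inverse Laplace transform of the residual symbol evaluated against the causal, $L^{1}$-integrable function $\mathcal{P}_{2} g^{(m)}$; by Young's convolution inequality, or equivalently by bounding the Bromwich integrand pointwise and recognizing the result as a convolution, this reduces the task to estimating the time-domain representative of $A(s)/(s^{m}(1+s)^{2})$. The choice $\sigma_{0} = 1/t$ is crucial: it keeps $|e^{st}| \leq e$ uniformly in $t$ and produces the explicit prefactor $C_{A}(t^{-1})$ of the statement. The remaining constant $2^{\mu} C_{\epsilon}(t)$ with $\epsilon = 1 - \mu$ emerges from an exact evaluation of the line integral $\int_{-\infty}^{\infty} |\sigma_{0} + i\tau|^{\mu}/|1+\sigma_{0}+i\tau|^{2}\, d\tau$, carried out by splitting into the regimes $|\tau| \lesssim \sigma_{0}$ and $|\tau| \gtrsim \sigma_{0}$ and invoking the Beta function identity, which produces the Gamma-function ratio $\Gamma(\epsilon/2)/\Gamma((\epsilon+1)/2)$ and the interpolation weight $t^{\epsilon}/(1+t)^{\epsilon}$.

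The main obstacle is the sharp tracking of $t$-dependence through these estimates. The coefficient $C_{A}(t^{-1})$ blows up like $t^{-m}$ as $t \to 0^{+}$ because of the hypothesis $C_{A}(\sigma) \lesssim \sigma^{-m}$ for $\sigma \in (0,1]$, so a naive estimate would be non-integrable near the origin; separately, the Bromwich integrand picks up additional singular behavior as $t \to 0^{+}$ when the abscissa is driven to infinity. The particular form of $C_{\epsilon}(t)$, with the factor $t^{\epsilon}/(1+t)^{\epsilon}$ interpolating between its small-$t$ and large-$t$ asymptotics, is tailored precisely to absorb the residual singularity near zero while remaining bounded as $t \to \infty$. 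Pinning down this exact constant with the correct Gamma-function prefactor, rather than merely an order-of-magnitude estimate, is the most delicate bookkeeping in the argument, and is the point at which this refined statement improves on the original Proposition~3.2.2 of Sayas's monograph.
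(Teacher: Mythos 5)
Your outline is essentially the proof of this result: the paper itself does not prove Proposition 5.1 (it is imported verbatim from the errata to Sayas's monograph), and the argument given there is exactly what you describe --- Bromwich inversion shifted to the abscissa $\sigma_0 = 1/t$, the factorization $A(s)G(s) = \bigl[A(s)/(s^{m}(1+s)^{2})\bigr]\,\mathcal{L}\{\mathcal{P}_2 g^{(m)}\}(s)$, and the evaluation of the line integral $\int_{\mathbb{R}} |1+\sigma_0+i\tau|^{\mu-2}\,d\tau$ through the Beta/Gamma identity, which is where $C_\epsilon(t)$ and the weight $t^{\epsilon}/(1+t)^{\epsilon}$ come from. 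Two cautions: first, to end up with an integral of $\|\mathcal{P}_2 g^{(m)}\|_X$ over $[0,t]$ only (the $\int_0^1$ in the displayed statement is a typo for $\int_0^t$, as the applications in Section 5 confirm) you genuinely need the time-domain convolution representation with the kernel $b=\mathcal{L}^{-1}\{A(s)/(s^{m}(1+s)^{2})\}$ bounded uniformly on $[0,t]$ --- bounding the Bromwich integrand pointwise is not ``equivalent'', since it yields $\int_0^\infty e^{-\sigma_0\tau}\|\mathcal{P}_2 g^{(m)}(\tau)\|_X\,d\tau$ and would require an additional causal truncation of $g$; second, your closing discussion has the asymptotics backwards: since $C_A(\sigma)\le c\,\sigma^{-m}$ only for $\sigma\in(0,1]$ and $C_A$ is non-increasing, the factor $C_A(t^{-1})\lesssim \max\{1,t^{m}\}$ degenerates as $t\to\infty$, not as $t\to 0^{+}$, where it stays bounded and it is $C_\epsilon(t)\sim t^{\epsilon}$ that tends to zero.
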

We now apply Proposition \ref{pr:5.1} to the solutions of the Dirichlet problem and collect the time-domain results in the following theorem:
\begin{theorem}[The Dirichlet problem]
Let $\mathcal{F} = ({\bf F}(x,t), F(x,t))$. \\
(a) For the densities $( \boldsymbol{\lambda}, \varsigma ) \in {\bf H}^{-1/2}(\Gamma) \times H^{-1/2}(\Gamma)$, 
If $ \mathcal{F} \in \mathcal{C}^5(\mathbb{R}, {\bf H}^{1/2} (\Gamma) ) $ is causal and $\mathcal{F}^{(6)} $ is integrable, then $\mathcal{L}^{-1}\{(\boldsymbol{\lambda}, \varsigma)\} \in \mathcal{C}( \mathbb{R}, {\bf H}^{-1/2}(\Gamma))$ is causal and 
\begin{align}\label{eq:5.17}
\|\mathcal{L}^{-1}\{( \boldsymbol{\lambda}, \varsigma ) \} \|_{{\bf H}^{-1/2}(\Gamma) } \leq c \frac{t} {(1+ t) } 
\,t\,\max \{1, t^5 \} \int_0^t \|\mathcal{P}_2 \mathcal{F}^{(4)}(\tau) \|_ {{\bf H}^{1/2} (\Gamma) } d \tau.
\end{align}
(b) For the solution $ (\mathbf{U}(x,t), \Theta(x,t) ) = \mathcal{L}^{-1} \{  \mathcal{S}(s) \circ\mathcal{V}^{-1}(s)\} \ast \mathcal{F}$. If $\mathcal{F} \in \mathcal{C}^4(\mathbb{R},
{\bf H}^{1/2}( \Gamma))$ is causal and $\mathcal{F}^{(5)}$ is integrable, then $ ({\bf U}(x,t), {\Theta(x,t)} )\in 
\mathcal{C}(\mathbb{R}, {\bf H}^1(\Omega^+))$ is causal and 
\begin{align}\label{eq:5.18}
\|({\bf U}(\cdot ,t), \Theta(\cdot,t) )\|_{{\bf H}^1(\Omega^+)} \leq c \frac{t}{(1 + t)}\,t\,\max \{1, t^{9/2} \} \int_0^t \|\mathcal{P}_2 \mathcal{F}^{(3)}(\tau) \|_ {{\bf H}^{1/2} (\Gamma) } d \tau. 
\end{align}
\end{theorem}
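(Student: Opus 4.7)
The plan is to recast both assertions as bounds for time-domain convolution operators and apply Proposition \ref{pr:5.1} to appropriate Laplace-domain symbols. For part (a) the relevant symbol is $\mathcal{V}^{-1}(s):\mathbf{H}^{1/2}(\Gamma)\to\mathbf{H}^{-1/2}(\Gamma)$, so that $(\boldsymbol{\lambda},\varsigma)=\mathcal{V}^{-1}(s)({\bf f},f)$; for part (b) it is the composition $\mathcal{S}(s)\circ\mathcal{V}^{-1}(s):\mathbf{H}^{1/2}(\Gamma)\to\mathbf{H}^1(\Omega^+)$. In each case I will identify a pair $(m,\mu)$ with $\mu\in[0,1)$ and $m\in\mathbb{Z}_{\ge 0}$ placing the symbol in the admissible class $\mathcal{A}(m+\mu,\mathcal{B}(X,Y))$, check the regularity hypotheses on $\mathcal{F}$ demanded by Proposition \ref{pr:5.1}, and compute $C_A(t^{-1})$ and $C_\epsilon(t)$ explicitly to match the right-hand side of \eqref{eq:5.17} or \eqref{eq:5.18}.

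For part (a) the work is essentially bookkeeping. Estimate \eqref{eq:4.34} of Lemma \ref{le:4.5} yields $\|\mathcal{V}^{-1}(s)\|\le c\,|s|^4/(\sigma\underline{\sigma}^5)$, which I would rewrite as $C_A(\sigma)|s|^{m+\mu}$ with $\mu=0$, $m=4$, and $C_A(\sigma)=1/(\sigma\underline{\sigma}^5)$. This $C_A$ is non-increasing on $(0,\infty)$ and satisfies $C_A(\sigma)\le c/\sigma^6$ on $(0,1]$, so $\mathcal{V}^{-1}$ lies in $\mathcal{A}(4,\mathcal{B}(\mathbf{H}^{1/2}(\Gamma),\mathbf{H}^{-1/2}(\Gamma)))$. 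The assumptions $\mathcal{F}\in\mathcal{C}^{m+1}=\mathcal{C}^5$ with integrable $\mathcal{F}^{(m+2)}=\mathcal{F}^{(6)}$ then exactly match the statement; a direct computation gives $C_A(t^{-1})=t\max\{1,t^5\}$, and with $\epsilon=1-\mu=1$ the factor $C_\epsilon(t)$ is proportional to $t/(1+t)$, reproducing \eqref{eq:5.17} after setting $g^{(m)}=\mathcal{F}^{(4)}$.

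The main obstacle is part (b): the naive bound $\|\mathcal{S}(s)\circ\mathcal{V}^{-1}(s)\|\le\|\mathcal{S}(s)\|\,\|\mathcal{V}^{-1}(s)\|$ obtained by multiplying the estimates of Lemmas \ref{le:4.5} and \ref{le:4.6}(a) produces the growth $|s|^6/(\sigma^2\underline{\sigma}^9)$, which would force $m=6$ and six derivatives on $\mathcal{F}$, two more than the three recorded in the statement. My plan is to bypass the composition and bound $\mathcal{S}(s)\circ\mathcal{V}^{-1}(s)$ directly from the coercivity inequality \eqref{eq:4.16}. Writing $(\mathbf{u},\theta)=\mathcal{S}(s)\mathcal{V}^{-1}(s)({\bf f},f)$ and testing the variational identity \eqref{eq:4.10} against $(\mathbf{u},\theta)$ itself turns the right-hand side into $\langle Z(s)(\boldsymbol{\lambda},\varsigma),\overline{({\bf f},f)}\rangle_\Gamma$, which I will control via $\|Z(s)\|\lesssim|s|/\underline{\sigma}$ together with the $\mathcal{V}^{-1}(s)$ bound. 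The norm equivalence \eqref{eq:4.6} then converts the resulting $\triple{\cdot}_{|s|}$ estimate into an $\mathbf{H}^1(\Omega^+)$ bound of the form $c\,|s|^3/(\sigma\underline{\sigma}^{9/2})$, coincidentally the same growth already recorded for $\mathcal{D}(s)$ in Lemma \ref{le:4.6}(c).

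With that sharp bound in hand, the composition belongs to $\mathcal{A}(3,\mathcal{B}(\mathbf{H}^{1/2}(\Gamma),\mathbf{H}^1(\Omega^+)))$ with $\mu=0$, $m=3$, and $C_A(\sigma)=1/(\sigma\underline{\sigma}^{9/2})$, so $\mathcal{F}\in\mathcal{C}^4$ with integrable $\mathcal{F}^{(5)}$ are exactly the hypotheses Proposition \ref{pr:5.1} requires. The same computation as before yields $C_A(t^{-1})=t\max\{1,t^{9/2}\}$ and $C_\epsilon(t)\propto t/(1+t)$, and invoking Proposition \ref{pr:5.1} with $g^{(m)}=\mathcal{F}^{(3)}$ delivers \eqref{eq:5.18}. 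Causality of both time-domain images is automatic from that proposition once causality of $\mathcal{F}$ is assumed.
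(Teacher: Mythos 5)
Your proposal is correct and follows essentially the same route as the paper: part (a) is the identical bookkeeping with $\mathcal{V}^{-1}(s)$, $m=4$, $\mu=0$, and for part (b) you, like the paper, avoid the lossy product bound by estimating $\mathcal{S}(s)\circ\mathcal{V}^{-1}(s)$ directly from the coercivity identity \eqref{eq:4.16} and the norm equivalence \eqref{eq:4.6}, arriving at the same bound $c\,|s|^3/(\sigma\underline{\sigma}^{9/2})$ as in \eqref{eq:5.19}. The only cosmetic difference is that you close the duality pairing with the mapping bound \eqref{eq:4.34} for $\mathcal{V}^{-1}(s)$, whereas the paper uses the trace-type estimate \eqref{eq:4.31} and cancels one power of $\triple{(\mathbf{u}_\lambda,\theta_\varsigma)}_{|s|,\mathbb{R}^3\setminus\Gamma}$; both give identical exponents.
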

\begin{proof}
For (a), the relevant spaces are $X= {\bf H}^{1/2}(\Gamma)$, and  $Y= {\bf H}^{-1/2}(\Gamma)$. The key point of the proof of (a) is the estimate for  $\mathcal{V}^{-1}(s)$ given in \eqref{eq:4.34} in Lemma \ref{le:4.5}. From that bound and following the notation on Proposition \ref{pr:5.1} we have that $\mu = 0, m=4, \epsilon = 1$, and
 \[
 C_{\epsilon} = c\,\frac{t} {(1+t)}, \qquad C_A(t^{-1}) = c\, t\max \{1, t^5\},
 \]
which proves \eqref{eq:5.17}. 

For the case (b) we consider the spaces $X= {\bf H}^{1/2}(\Gamma)$, and  $Y= {\bf H}^{1}(\Omega^+)$. The proof of the estimates of the solution $({\bf U}(x,t), \Theta(x,t) ) \in {\bf H}^1( \Omega^+) \times H^1(\Omega^+) $ is more involved. We need to get a tighter bound for the operator $ \mathcal{S}(s)\mathcal{V}^{-1}(s)$,
than the one that would follow from the product of the bounds for $\mathcal{S}(s)$ and $\mathcal{V}^{-1}(s)$ proven earlier. To do so, we start from the estimate given by (\ref{eq:4.16})
 \begin{align*}
\frac{\sigma\underline{\sigma}}{|s|}\triple{ ( \mathbf u_{\lambda}, \theta_{\varsigma} )}^2_{|s|, \mathbb{R}^3\setminus \Gamma}&\leq \mathrm{Re}\left( \mathcal{A}_{\mathbb{R}^3\setminus \Gamma}\left(  Z(s) ( \mathbf u_{\lambda}, \theta_{\varsigma}), \overline{ ( \mathbf u_{\lambda}, \theta_{\varsigma})} \right)\right)\\
&= \mathrm{Re} \left( \left\langle Z(s) (\boldsymbol{\lambda},\varsigma), \overline{ {\mathcal{V}(s)} (\boldsymbol{\lambda},\varsigma)}\right \rangle_{\Gamma} \right) \\
&\leq \frac{|s|}{\underline{\sigma}}~\|(\boldsymbol{\lambda},\varsigma)\|_{{\bf H}^{-1/2}(\Gamma)}~ \|({\bf f}(x,s), f (x,s) ) \|_{{\bf H}^{1/2} (\Gamma)}\\
& \leq \frac{|s|^2} {\underline{\sigma}^{5/2} }~
\triple{( {\bf u}_{\lambda}, \theta_{\varsigma} )}_{|s|, \mathbb{R}^3\setminus \Gamma}~ \|({\bf f}(x,s), f (x,s) ) \|_{{\bf H}^{1/2} (\Gamma)}, \quad \mbox{due to (\ref{eq:4.31})}.
\end{align*}
 From this, and making use of the equivalence between the norms we obtain 
 \[
 \triple{( \mathbf u_{\lambda}, \theta_{\varsigma} )}_{1, \mathbb{R}^3 }
\leq c \frac{|s|^3} {\sigma\underline{\sigma}^{9/2}} \|( {\bf f}(x,s), f(x,s) ) \|_{{\bf H}^{1/2} (\Gamma)},
 \]
 which implies that 
 \begin{equation}
 \label{eq:5.19}
\| \mathcal{S}(s)\mathcal{V}^{-1}(s)\|_{{\bf H}^{1/2}(\Gamma) \rightarrow {\bf H}^1(\Omega^+)} \leq ~c \frac{|s|^3} { \sigma\underline{ \sigma}^{9/2}}.
\end{equation}
The estimate \eqref{eq:5.18} follows from Proposition \ref{pr:5.1}  by extracting the information $ \mu = 0, m = 3, \epsilon = 1$ and 
 \[
 C_{\epsilon} =\frac{t}{(1+t)}, \quad \text{ and } \quad C_{A}(t^{-1}) = t\, \max\{1, t^{9/2}\},
 \]
 from the inequality \eqref{eq:5.19}.
\end{proof}

\begin{theorem}[The Neumann problem] Let $\mathcal{G} = ({\bf G}(x,t), G(x,t))$. \\
(a) For the densities $( \boldsymbol{\phi}, \varphi) \in {\bf H}^{1/2}(\Gamma) \times H^{1/2}(\Gamma)$, 
if $ \mathcal{G} \in \mathcal{C}^2 ( \mathbb{R}, {\bf H}^{-1/2} (\Gamma) )$ is causal and $\mathcal{G}^{(3)} $ is integrable, then $\mathcal{L}^{-1}\{(\boldsymbol{\phi}, \varphi)\} \in \mathcal{C}( \mathbb{R}, {\bf H}^{1/2}(\Gamma))$ is causal and 
\begin{equation}\label{eq:5.20}
\|\mathcal{L}^{-1}\{( \boldsymbol{\phi}, \varphi ) \} \|_{{\bf H}^{1/2}(\Gamma) } \leq c \frac{t} {(1+ t) } 
 \,t\,\max \{1, t^4 \} \int_0^t \|\mathcal{P}_2 \mathcal{G}^{(2)}(\tau) \|_ {{\bf H}^{-1/2} (\Gamma) } d \tau,
\end{equation}
(b) For the solution $ ({\bf U}(x,t), \Theta(x,t) ) = \mathcal{L}^{-1} \{  \mathcal{D} \circ\mathcal{W}^{-1}(s) \} \ast ({\bf G}(x,t), G(x,t)) $, if $ \mathcal{G} \in \mathcal{C}^{3}( \mathbb{R}, {\bf H}^{-1/2} (\Gamma) )$ is causal and $\mathcal{G}^{(4)} $ is integrable, then $({\bf U}(x,t), \Theta(x,t) )\in \mathcal C(\mathbb R, \mathbf{H}^1(\Omega^+))$ is causal and we have the estimate: 
\begin{align}\label{eq:5.21}
\|({\bf U}(\cdot,t)\}, {\Theta(\cdot,t)} )\|_{{\bf H}^1(\Omega^+)} \leq \frac{t}{(1 + t)} \,t\,\max \{1, t^4 \} \int_0^t \|\mathcal{P}_2 \mathcal{G}^{(2)}(\tau) \|_ {{\bf H}^{1/2} (\Gamma) } d \tau. 
\end{align}
\end{theorem}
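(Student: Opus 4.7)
The plan is to mirror the structure of the proof of the preceding Dirichlet theorem, which treated its two parts through Proposition~\ref{pr:5.1} applied to two different symbols. The real work is concentrated in part~(b); part~(a) is immediate once the correct symbol-class parameters are identified.

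For part~(a) I would apply Proposition~\ref{pr:5.1} directly to the symbol $\mathcal{W}^{-1}(s)$, using the bound \eqref{eq:4.35} from Lemma~\ref{le:4.5} with $X = \mathbf{H}^{-1/2}(\Gamma)$ and $Y = \mathbf{H}^{1/2}(\Gamma)$. This places $\mathcal{W}^{-1}$ in the class $\mathcal{A}(m+\mu, \mathcal{B}(X,Y))$ with $m+\mu = 2$, and selecting $\mu = 0$, $m = 2$ yields $\epsilon = 1-\mu = 1$. A case split on whether $t < 1$ or $t \ge 1$ (so that the min in $\underline{\sigma} = \min\{1, t^{-1}\}$ resolves differently) produces $C_A(t^{-1}) = c\,t\max\{1,t^4\}$ and $C_\epsilon(t) = c\,t/(1+t)$. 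Substituting these into Proposition~\ref{pr:5.1} reproduces exactly \eqref{eq:5.20}.

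For part~(b) the critical step is deriving a bound on the composite operator $\mathcal{D}(s) \circ \mathcal{W}^{-1}(s)$ that is strictly sharper than the product of the individual bounds from Lemma~\ref{le:4.6}(c) and \eqref{eq:4.35}. I would mimic the trick that produced \eqref{eq:5.19} in the Dirichlet proof: start from the coercivity inequality \eqref{eq:4.24} applied to $(\boldsymbol{\phi},\varphi) = \mathcal{W}^{-1}(s)(\mathbf{g}, g)$, rewrite the right-hand side as $\mathrm{Re}\langle Z(s)(\mathbf{g},g), \overline{(\boldsymbol{\phi},\varphi)}\rangle_\Gamma$ by using $\mathcal{W}(s)(\boldsymbol{\phi},\varphi) = (\mathbf{g},g)$, estimate this duality pairing by $\frac{|s|}{\underline{\sigma}}\|(\mathbf{g},g)\|_{\mathbf{H}^{-1/2}(\Gamma)} \|(\boldsymbol{\phi},\varphi)\|_{\mathbf{H}^{1/2}(\Gamma)}$, and then invoke \eqref{eq:4.35} to control $\|(\boldsymbol{\phi},\varphi)\|_{\mathbf{H}^{1/2}(\Gamma)}$ by $c(|s|^2/\sigma\underline{\sigma}^4)\|(\mathbf{g},g)\|_{\mathbf{H}^{-1/2}(\Gamma)}$. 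After passing from the $|s|$-weighted energy norm to the $\mathbf{H}^1$-norm via the equivalence \eqref{eq:4.6}, one lands at
\[
\|\mathcal{D}(s)\mathcal{W}^{-1}(s)\|_{\mathbf{H}^{-1/2}(\Gamma) \to \mathbf{H}^1(\Omega^+)} \le c\,\frac{|s|^2}{\sigma\underline{\sigma}^4}.
\]
Proposition~\ref{pr:5.1} then applies with the same symbol-class exponents $m=2$, $\mu = 0$, $\epsilon = 1$ and the same $C_A(t^{-1})$ and $C_\epsilon(t)$ as in part~(a), delivering \eqref{eq:5.21}.

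The main obstacle I anticipate is the composition bound in part~(b): naively multiplying the two pre-existing bounds would introduce additional factors of $|s|/\underline{\sigma}$ and would push the effective $m+\mu$ well beyond $2$, killing the stated $t\max\{1,t^4\}$ temporal growth. Avoiding this requires reopening the variational formulation rather than treating $\mathcal{D}(s)$ and $\mathcal{W}^{-1}(s)$ as independent black boxes. Once that refinement is in place, the rest of the argument—identification of symbol exponents and explicit evaluation of $C_\epsilon$ and $C_A(t^{-1})$—is a routine transcription of the Dirichlet case.
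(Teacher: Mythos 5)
Your proposal is correct and follows essentially the same route as the paper: part (a) is the direct application of Proposition \ref{pr:5.1} with the bound \eqref{eq:4.35}, and part (b) hinges on reopening the variational identity \eqref{eq:4.24} for $(\mathbf{u}_{\phi},\theta_{\varphi})=\mathcal{D}(s)\mathcal{W}^{-1}(s)(\mathbf{g},g)$ to obtain $\|\mathcal{D}(s)\mathcal{W}^{-1}(s)\|_{\mathbf{H}^{-1/2}(\Gamma)\to\mathbf{H}^{1}(\Omega^{+})}\leq c\,|s|^{2}/(\sigma\underline{\sigma}^{4})$, which is exactly the paper's estimate \eqref{eq:5.22}, followed by the same symbol-class bookkeeping ($\mu=0$, $m=2$, $\epsilon=1$). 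The only cosmetic deviation is that you close the duality-pairing estimate by invoking \eqref{eq:4.35} to control $\|(\boldsymbol{\phi},\varphi)\|_{\mathbf{H}^{1/2}(\Gamma)}$, whereas the paper bounds the jump by the energy norm via the trace theorem and cancels one factor of $\triple{(\mathbf{u}_{\phi},\theta_{\varphi})}_{|s|,\mathbb{R}^3\setminus\Gamma}$; both yield the same exponents and hence the same time-domain estimate.
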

\begin{proof}
The result  \eqref{eq:5.20} for (a) follows immediately from the bound for $\mathcal{W}^{-1}(s)$  given by (\ref{eq:4.35}) in Lemma \ref{le:4.5}.

To prove (b), we need to derive a bound for the operator $\mathcal{D}(s) \mathcal{W}^{-1}(s)$. Once again, the product of the bounds for $\mathcal{W}^{-1}(s)$  and $\mathcal{D}(s)$, would yield an unnecessarily loose estimate. Instead, we observe that from \eqref{eq:4.24}, we have 
\begin{align*}
\frac{\sigma\underline{\sigma}}{|s|} \triple{( \mathbf u_{\phi}, \theta_{\varphi})}^2_{|s|, \mathbb{R}^3\setminus \Gamma}
&\leq \mathrm{Re} \left( \mathcal{A}_{\mathbb{R}^3\setminus \Gamma} \left( Z(s)({\bf u}_{\phi}, \theta_{\varphi} ), \overline{({\bf u}_{\phi}, \theta_{\varphi})}\right) \right)  \\
&= \mathrm{Re} \left( \left\langle Z(s) \mathcal{R}_N ( { \bf u}_{\phi}, \theta_{\varphi} ) |_{\Gamma}, \overline{\jump{ ({\bf u}_{\phi}, \theta_{\varphi}) }}\right\rangle_{\Gamma}\right). 
\end{align*}
This implies that 
\[
\frac{\sigma\underline{\sigma}}{|s|} \triple{( \mathbf u_{\phi}, \theta_{\varphi} )}_{|s|, \mathbb{R}^3\setminus \Gamma}
\leq \frac{|s|}{\underline{\sigma}^2}\,\|\mathcal{R}_N ( { \bf u}_{\phi}, \theta_{\varphi} )\|_{ {\bf H}^{-1/2}(\Gamma) }
= \frac{|s|}{\underline{\sigma}^2}\,\|( {\bf g}(x,s) , g(x,s) ) \|_{ {\bf H}^{-1/2}(\Gamma) }.
\]
Hence, using \eqref{eq:4.6} we have 
\[
 \triple{( \mathbf u_{\phi}, \theta_{\varphi} ) }_{1, \mathbb{R}^3\setminus \Gamma} \leq c \frac{|s|^2}{\sigma\underline{\sigma}^4}
 \|( {\bf g}(x,s) , g(x,s) ) \|_{ {\bf H}^{-1/2}(\Gamma) },
\]
and as a consequence
\begin{equation}\label{eq:5.22}
 \| \mathcal{D}(s) \mathcal{W}^{-1}(s) \|_{{\bf H}^{-1/2}(\Gamma) \rightarrow {\bf H}^1(\Omega^+)}
\leq c \frac{|s|^2}{\sigma\underline{\sigma}^4}. 
\end{equation}
The estimate \eqref{eq:5.21} for (b) follows from an application of Proposition \ref{pr:5.1} using the information from the estimate \eqref{eq:5.22} above.
\end{proof}

\paragraph{A personal note from the Authors}
\textit{This article was in a preliminary state of development at the time when Prof. Francisco-Javier Sayas received sudden bad news regarding his health. After the news were received, the work on the manuscript was paused. Francisco's untimely passing in April 2019 prevented the three of us from completing the work together. We are very sorry for the loss of a brilliant colleague at the peak of his career, an esteemed and dedicated mentor and a wonderful collaborator. Most of all, we miss our very dear friend.}\\

%
\section{Acknowledgements}
%

Tonatiuh S\'anchez-Vizuet was partially funded by the US Department of Energy. Grant No. DE-FG02-86ER53233.

\makeatletter
\newcommand\appendix@section[1]{%
\refstepcounter{section}%
\orig@section*{Appendix \@Alph\c@section: #1}%
\addcontentsline{toc}{section}{Appendix \@Alph\c@section: #1}%
}
\let\orig@section\section
\g@addto@macro\appendix{\let\section\appendix@section}
\makeatother

%
\begin{appendix}
%
\section{Fundamental solutions}
%
\subsection{3-D Fundamental solution}
%
In this appendix, we will construction the fundamental solution of eq.(\ref{eq:2.5}) by following H\"{o}rmander's \cite{Ho:1964} approach (see also Kupradze \cite{Ku:1979}, Banerjee \cite{Ba:1994}), which is essentially similar to the construction of the inverse of a matrix in linear algebra. Let $\mathbf{B}(\partial_x, s) $ be the matrix of the operators defined by equations \eqref{eq:2.3} as in (\ref{eq:2.5}):
\begin{equation} \label{eq:A.1}
\mathbf{B}(\partial_x , s) \begin{pmatrix}
{\mathbf u} \\
{\theta}\\
\end{pmatrix}
:= \left (
\begin{array} {ccc} 
  \Delta^* - \rho s^2 &\; &-\gamma \;\nabla \\
  -\eta~ s\; \nabla^{\top} & \; & \Delta -s/\kappa \\
 \end{array} \right )
 \begin{pmatrix}
{\mathbf u} \\
\theta 
\end{pmatrix}.
\end{equation}
Note that $\mathbf{B}(\partial_x, s) $ is a $4\times 4$ matrix of scalar operators. Fixing the fourth row and column (corresponding to the location in the matrix of the scalar operator $\Delta -s/\kappa$) and letting the indices $i,j$ run from one to three, the entries of $\mathbf{B}(\partial_x, s) $ are given as
\begin{eqnarray*}
B_{ij} &=& (\mu \Delta -  \rho s^2) \delta_{ij} + (\lambda + \mu) \frac{\partial^2}{\partial x_i \partial x_j}, \quad (i, j = 1, 2, 3)\\
B_{i4} &=& -\gamma \;\frac{\partial}{\partial_i}, \quad (i =1, 2, 3)\\
B_{4j} &=& -\eta \;s \;\frac{\partial}{\partial j}, \quad (j = 1, 2, 3) \\
B_{44} &=& \Delta - s/\kappa.
\end{eqnarray*}
We begin by computing the determinant of $ \mathbf{B}(\partial_x, s)$:
\begin{equation*}
det\; \mathbf{B} = \left | \begin{array}{ccc|c}
B_{11} & B_{12} & B_{13} & B_{14}\\
B_{21} & B_{22} & B_{23} & B_{24}\\ 
B_{31} & B_{32} & B_{33} & B_{34}\\ \hline
B_{41} & B_{42} & B_{43} & B_{44}
\end{array} \right |
 = \mu ^2 (\lambda + 2 \mu) \left(\Delta^4 + a_1 \Delta^3 + a_2 \Delta^2 + a_3 \Delta + a_4\right). 
\end{equation*}
Here the coefficients $a_1, a_2, a_3, $ and $a_4$ are given explicitly:
\begin{eqnarray*}
a_1 &= & - \frac{1}{ \mu^2 (\lambda + 2 \mu) } \left( \gamma\,\eta \,\mu^2\,s + \frac{s}{\kappa} \left(\mu^3 + (\lambda + \mu) \mu^2 \right) + 3 \mu^2 \rho s^2 + 2 \mu(\lambda + \mu) \rho s^2 \right),
\\
a_2 & = & \frac{1} {\mu^2 (\lambda + 2 \mu)} \left( 2 \gamma \,\eta \,\mu\,\rho\,s^3 + \frac{s}{\kappa} \left( 3 \mu^2 \rho s^2 + 2 \mu (\lambda + \mu) \rho s^2 \right) + 3 \mu \rho^2 s^4 + (\lambda + \mu) \rho^2 s^4 \right), 
\\
a_3 &= & - \frac{1}{\mu^2 (\lambda + 2 \mu)} \left(\gamma\, \eta \,\mu\, \rho^2\, s^5 + \rho^3 s^6 + \frac{s}{\kappa} \left( 3 \mu \rho^2 s^4 + (\lambda + \mu) \rho^2 s^4\right)\right),
\\
a_4 &= &\frac{s} {\kappa} (\rho^3 s^6 ).
\end{eqnarray*}
One may show that 
\begin{equation} \label{eq:A.2}
det \,\mathbf{B} =\mu^2 (\lambda + 2 \mu) (\Delta - \lambda_1^2) (\Delta - \lambda_2^2) (\Delta - \lambda_3^2)^2 , 
\end{equation}
provided the constants $\lambda_1^2, \lambda_2^2, \lambda_3^2$ satisfy the dispersion relations as in eq.(\ref{eq:2.7a}), namely, 
\begin{alignat}{6}
\lambda_1^2 + \lambda_2^2 &=\frac{s}{\kappa} + \frac{\gamma \,\eta\, s} {\lambda +2 \mu} + \lambda_p^2,  &\qquad \qquad& \lambda_p^2 = \frac{\rho \,s^2} {\lambda + 2 \mu}, \label{eq:A.3}\\
\lambda_1^2 \,\lambda_2^2 &= \frac{s}{\kappa}\, \lambda_p^2,  &\qquad \qquad& \lambda_3^2 = \frac{\rho \,s^2}{\mu}. \nonumber
\end{alignat}
Next, we need to compute the cofactor (i.e., minor with signs) $A_{ij}(\partial_x,s) $ of the elements $B_{ij}(\partial_x, s)$ in $\mathbf{B}(\partial_x, s)$. A simple but tedious computation yields for $i,j=1,2,3$ :
\begin{align*}
A_{ij} &= \left(\gamma\, \eta\, s -(\lambda + \mu) (\Delta - s/\kappa)\right) (\mu \Delta - \rho s^2) \frac{\partial^2}{\partial x_i \partial x_j}, \\
&\qquad + \delta_{ij} \left((\mu \Delta - \rho s^2)^2 (\Delta - s/\kappa) - \left(\gamma\, \eta\, s -(\lambda + \mu) (\Delta - s/\kappa)\right) (\mu \Delta - \rho s^2) \Delta\right) \\
&= \mu \left(\gamma\, \eta\, s - (\lambda + \mu) (\Delta - s/\kappa ) \right) (\Delta - \lambda^2_3) \frac{\partial^2}{\partial x_i \partial x_j} + \mu\, (\lambda + 2 \mu) \delta_{ij} (\Delta - \lambda^2_1) (\Delta - \lambda^2_2) (\Delta - \lambda_3^2) , \\
A_{i4}&= \eta \,s (\mu \Delta -\rho s^2)^2 \frac{\partial}{\partial x_i} = \eta \,s \mu^2 ( \Delta - \lambda^2_3)^2 \frac{\partial}{\partial x_i},  \\
A_{4j} &= \gamma(\mu \Delta - \rho s^2)^2 \frac{\partial}{\partial x_j} = \gamma\, \mu^2 ( \Delta - \lambda_3^2)^2 \frac{\partial}{\partial x_j}, \\
A_{44} &= ( \mu \Delta - \rho s^2)^2 \left( (\lambda + 2 \mu) \Delta - \rho s^2 \right)
= \mu^2 (\lambda + 2 \mu) (\Delta - \lambda_3^2)^2 (\Delta - \lambda_p^2).
 \end{align*}
 We note that the cofactor matrix $ A_{ij}(\partial_x, s)_{4 \times 4} $ contains the factor 
 $(\Delta - \lambda^2_3)$. We may rewrite it in the form 
 \begin{equation} \label{eq:A.5}
 A_{ij}(\partial_x, s) _{4 \times 4} = \widetilde{A}_{ij}(\partial_x, s)_{4 \times 4} (\Delta - \lambda^2_3).
 \end{equation}
Suppose now $\underline {\underline{ \bf E}}(x,y;s)$ is the fundamental matrix for $\mathbf{B}(\partial_x, s)$ such that 
\begin{equation}\label{eq:A.6}
\mathbf{B}(\partial_x, s) \underline {\underline{\bf E}}(x,y;s) = - \delta (x-y) \underline{\underline{\bf I}}.
\end{equation}
Replacing $\underline {\underline{ \bf E}}(x,y;s)$ in (\ref{eq:A.6}) by the adjoint of cofactor matrix 
$ A_{ij}(\partial_x, s)^{\prime}_{4 \times 4} \varphi, $ 
where $\varphi$ is a scalar function and the prime means the transposition, we see that
\begin{align*}
 \sum_{k=1}^4 B_{ik}(\partial_x, s) A_{jk}(\partial_x,s) \varphi 
 & \equiv det\, \mathbf{B}(\partial_x,s) \varphi \; \delta_{ij} \\
\hspace{2in} & =\left( \mu^2 (\lambda +2 \mu) (\Delta -\lambda^2_1) (\Delta - \lambda^2_2) (\Delta - \lambda_3^2)^2 \varphi\right)\; \delta_{ij} .
 \end{align*} 
 This shows that in order for $\underline {\underline{ \bf E}}(x,y;s)= A_{ij}(\partial_x, s)^{\prime}_{4 \times 4} \varphi,$ to be the matrix of the fundamental solution, we must require $\varphi$ to be the solution of 
\[ 
 \mu^2(\lambda +2 \mu) (\Delta -\lambda^2_1) (\Delta - \lambda^2_2) (\Delta - \lambda_3^2)^2 \varphi = - \delta (x-y).
\]
In view of \eqref{eq:A.5}, it suffices then to consider the equation
\[
 (\Delta -\lambda^2_1) (\Delta - \lambda^2_2) (\Delta - \lambda_3^2) \Phi = -\delta(x-y)
 \]
 for the function $\Phi =\mu^2 (\lambda +2 \mu) (\Delta - \lambda_3^2) \varphi$. A simple manipulation yields 
 \begin{equation}\label{eq:A.8a}
 \Phi = \sum_{k=1}^3 
\frac{1} {(\lambda^2_k - \lambda^2_{k+1})(\lambda^2_k - \lambda^2_{k+2})}\; 
\frac{e^{-\lambda_k |x - y|}} {4 \pi|x - y|} \quad \mbox{with} \quad \lambda_4 = \lambda_1,\; \lambda_5 = \lambda_2 
\end{equation}
and consequently we obtain 
\begin{equation}\label{eq:A.9}
\underline {\underline{\bf E}}(x,y;s) = \frac{1}{\mu^2 (\lambda + 2 \mu)} \widetilde{A}_{ij}(\partial_x, s)^{\prime} _{4 \times 4} \Phi,
\end{equation}
where we have
\begin{gather}
\frac{1}{\mu^2 (\lambda + 2 \mu)} \widetilde{A}_{ij}(\partial_x, s)^{\prime} _{4 \times 4} 
:= \left (
\begin{array} {c|c} 
\mu^{-1} (\Delta - \lambda^2_1) (\Delta - \lambda^2_2) \,\underline{\underline{\bf I}}\\ 
 + \mu^{-1} (\lambda + 2 \mu)^{-1}\;\left( \gamma\, \eta\, s - (\lambda + \mu)\right.\\
 \left.(\Delta - s/\kappa)\right)\; \nabla\nabla^{\top}& (\lambda + 2 \mu)^{-1}\eta (\Delta - \lambda^2_3) \;\nabla \\ [2mm] \hline\\
 \hspace{4mm} (\lambda + 2 \mu)^{-1}\;\eta~ s (\Delta - \lambda^2_3) \nabla^{\top} &\hspace{4mm} (\Delta -\lambda_p^2)(\Delta -\lambda^2_3) \\
 \end{array} \right ). 
\end{gather} 
Substitution of $\Phi$ from \eqref{eq:A.8a} into \eqref{eq:A.9} results finally in
\begin{equation}\label{eq:A.11}
\underline {\underline{\bf E}}(x,y;s) = \sum_{k=1}^3 \mathbf{D}_k(x, s) \frac{e^{-\lambda_k |x - y|}}{4 \pi |x - y |}, 
\end{equation}
where $\mathbf{D}_k(x, s)'s $ are matrices of differential operators given by
\begin{eqnarray} 
\mathbf{D}_1(x, s)&:=& \frac{1}{\rho s^2 (\lambda_1^2 - \lambda_2^2)}
 \left (
\begin{array} {l|l} 
\hspace{2mm}\;(\lambda^2_p - \lambda^2_2) \,\nabla \nabla^{\top}
&\hspace{4mm} \gamma \, \lambda^2_p \;\nabla \\ [2mm] \hline\\
\hspace{4mm} s\, \eta \,\lambda^2_p \,\nabla^{\top} & \hspace{4mm} \rho\, s^2\, (\lambda^2_1 - \lambda^2_p) 
\end{array} \right ), \\[4mm]
\mathbf{D}_2(x, s)&:=& \frac{1}{\rho s^2 (\lambda_2^2 - \lambda_1^2)}
 \left (
\begin{array} {l|l} 
\hspace{2mm}\;(\lambda^2_p - \lambda^2_1)\, \nabla \nabla^{\top}
& \hspace{4mm} \gamma \, \lambda^2_p \;\nabla \\ [2mm] \hline\\
\hspace{4mm} s\,\eta \,\lambda^2_p \,\nabla^{\top} &\hspace{4mm} \rho \,s^2\, (\lambda^2_2 - \lambda^2_p) 
\end{array} \right ), \\[1mm]
\mathbf{D}_3(x, s)&:=& \frac{1}{\rho s^2}
 \left (
\begin{array} {l|l} 
\hspace{2mm}\;\lambda^2_3 \;\underline{\underline{\bf I}} - \nabla \nabla^{\top}\hspace{2mm}
&\hspace{4mm} {\mathbf 0} \\ [2mm] \hline\\
{\hspace{8mm}\mathbf 0} &\hspace{4mm} {\mathbf 0}
\end{array} \right ). 
 \end{eqnarray}
This expansion for the fundamental solution in \eqref{eq:A.11} was motivated by the compact form presented in a celebrated paper by Cakoni \cite{Cakoni:2000} on the time-harmonic thermoelastic screen scattering problem in $\mathbb{R}^3$.
In the following, we will only include the derivation of the matrix of differential operators 
$\mathbf{D}_1(x, s)$, since derivations of the other two are similar.

In all the arguments below, the starting point is the substitution of \eqref{eq:A.8a} into \eqref{eq:A.9} and the relations \eqref{eq:A.3} will be used extensively. We begin with the terms on the final row and column of $\mathbf{D}_1(x, s)$, let $i,j \in\{1,2,3\}$, and treat the following four cases separately:

\begin{itemize}
\item  ${\displaystyle \mathbf{D}_1(x, s)_{i 4}:= \frac{1}{\rho s^2 (\lambda_1^2 - \lambda_2^2)} \Big( \gamma \, \lambda^2_p \;\nabla \Big). }$

Letting $j=4$ and $i = 1, 2, 3$ in \eqref{eq:A.9} yields
\begin{align*}
\left(\frac{(\lambda + 2 \mu)^{-1}\gamma (\Delta - \lambda^2_3) \;\nabla \,} {(\lambda^2_1 - \lambda^2_2)(\lambda^2_1- \lambda^2_3)} \right)\,\frac{e^{-\lambda_1 |x - y|}} {4 \pi|x - y|} & =
 \left( \frac{(\lambda + 2 \mu)^{-1}\gamma \;\nabla \,} {\lambda^2_1 - \lambda^2_2}\right) \frac{e^{-\lambda_1 |x - y|}} {4 \pi|x - y|} \\
& = \frac{1}{\rho s^2 (\lambda_1^2 - \lambda_2^2)} (\gamma\, \lambda^2_p\, \nabla)\, \frac{e^{-\lambda_1 |x - y|}} {4 \pi|x - y|} .
\end{align*}
\item ${\displaystyle \mathbf{D}_1(x, s)_{4j}:=  \frac{1}{\rho s^2 (\lambda_1^2 - \lambda_2^2)} \Big( s\, \eta\, \lambda^2_p \;\nabla ^{\top} \Big)}$.

Letting $i=4$  and  $j = 1, 2, 3$ in \eqref{eq:A.9}, it follows that
\begin{align*}
\left( \frac{(\lambda + 2 \mu)^{-1}\;\eta~ s (\Delta - \lambda^2_3) \nabla^{\top} } {(\lambda^2_1 - \lambda^2_2)(\lambda^2_1- \lambda^2_3)} \right) \frac{e^{-\lambda_1 |x - y|}} {4 \pi|x - y|} 
& = \left(\frac{(\lambda + 2 \mu)^{-1}\;\eta~ s \nabla^{\top}} {\lambda^2_1 - \lambda^2_2}\right) \frac{e^{-\lambda_1 |x - y|}} {4 \pi|x - y|} \\
& = \frac{1} { \rho \, s^2 (\lambda^2_1 - \lambda^2_2)} (s \, \eta\; \lambda^2_p\, \nabla^{\top} ) \frac{e^{-\lambda_1 |x - y|}} {4 \pi|x - y|}.
\end{align*}
\item ${\displaystyle \mathbf{D}_1(x, s)_{44} :  = \frac{1}{\rho s^2 (\lambda_1^2 - \lambda_2^2)} \Big( \rho \,s^2\, (\lambda^2_2 - \lambda^2_p) \Big) }$.

If both $i=j=4$ in \eqref{eq:A.9}, then
\begin{equation*}
\left(\frac{(\Delta - \lambda^2_p) (\Delta - \lambda^2_3)} {(\lambda^2_1 - \lambda^2_2) (\lambda^2_1- \lambda^2_3) } \right) \frac{e^{-\lambda_1 |x - y|}} {4 \pi|x - y|} = \frac{1} { \rho \, s^2 \, (\lambda^2_1 - \lambda^2_2)} \Big( \rho s^2\, (\lambda^2_1- \lambda^2_p) \Big)
\frac{e^{-\lambda_1 |x - y|} }{4 \pi|x - y|} .
\end{equation*}
\item ${\displaystyle  \mathbf{D}_1(x, s)_{ij}: = \frac{1}{\rho s^2 (\lambda_1^2 - \lambda_2^2)} \Big( (\lambda^2_p - \lambda^2_2) \,\nabla \nabla^{\top} \Big). }$

The block corresponding to $i,j\in\{1,2,3\}$ is more involved. Once again, starting from \eqref{eq:A.9}:
\begin{align}
\nonumber
& \frac{1} {(\lambda^2_1 - \lambda^2_2)(\lambda^2_1- \lambda^2_3)}\Big(\mu^{-1} (\Delta - \lambda^2_1) (\Delta - \lambda^2_2) \,\underline{\underline{\bf I}} \\
 \label{eq:A.15}
 & \qquad  + \mu^{-1} (\Delta - \lambda^2_1) 
 \mu^{-1} (\lambda + 2 \mu)^{-1}\;\Big[ \gamma\, \eta\, s - (\lambda + \mu)
(\Delta - s/\kappa)\Big]\; \nabla\nabla^{\top} \Big) \frac{e^{-\lambda_1 |x - y|}} {4 \pi|x - y|}. 
\end{align}
First, note that the contribution due to the term involving the identity vanishes, since
\begin{equation}\label{eq:fundamentalsolution}
\displaystyle
( \Delta - \lambda_1^2 )\frac{ e^{-\lambda_1|x -y|}}{ 4\pi|x -y|} = 0.
\end{equation}
The same is true for the corresponding term in $\mathbf D_2(x,s)$). Now let us consider the term in the square bracket :
\begin{align}
\nonumber 
\gamma\, \eta\, s & -  (\lambda + \mu) (\Delta - s/\kappa)  \\
\nonumber
& = \gamma\, \eta\, s - (\lambda + 2 \mu) (\Delta - s/\kappa) + \mu (\Delta - s/\kappa) \\
\nonumber
& = - (\lambda + 2 \mu)\left(- \frac{\gamma \eta\, s}{\lambda + 2 \mu} + \Delta - s/\kappa  - \lambda^2_p + \lambda^2_p \right) + \mu (\Delta - s/\kappa) \\
\nonumber
& = - (\lambda + 2 \mu)\left( \Delta - \left( s/\kappa + \frac{\gamma \eta\, s}{\lambda + 2 \mu} + \lambda^2_p \right) + \lambda^2_p \right) +\mu (\Delta - \lambda^2_1) + \mu ( \lambda^2_1- s/\kappa)\\
\nonumber
& = - (\lambda + 2 \mu)\left( \Delta -(\lambda^2_1 + \lambda^2_2 )+ \lambda^2_p \right)  + \mu (\Delta - \lambda^2_1) + \mu ( \lambda^2_1- s/\kappa) \\
\label{eq:A.16}
& = -(\lambda + 3 \mu)( \Delta - \lambda^2_1) - (\lambda + 2 \mu) ( \lambda^2_p - \lambda^2_2) + \mu (\lambda^2_1 - s/\kappa).
\end{align}
Due to \eqref{eq:fundamentalsolution}, the term $ -(\lambda + 3 \mu)( \Delta - \lambda^2_1)$ can be dropped out of the computation and the last two terms in  \eqref{eq:A.16} can be substituted instead of the term inside the square bracket in \eqref{eq:A.15}, yielding 
\begin{align}
\nonumber
 \frac{1} {(\lambda^2_1 - \lambda^2_2)(\lambda^2_1- \lambda^2_3)} & \left( 
 \mu^{-1} (\lambda + 2 \mu)^{-1}\; \Big[ \gamma\, \eta\, s - (\lambda + \mu)
(\Delta - s/\kappa)\Big]\; \nabla\nabla^{\top} \right)  \\
\nonumber
& = \frac{1} {(\lambda^2_1 - \lambda^2_2)(\lambda^2_1- \lambda^2_3)}\left( 
- \mu^{-1}( \lambda^2_p - \lambda^2_2) +\frac{\lambda^2_1 - s/\kappa}{\lambda + 2 \mu} \right) \nabla\nabla^{\top}  \\
\nonumber
& =\frac{1} { \rho s^2 (\lambda^2_1 - \lambda^2_2)(\lambda^2_1- \lambda^2_3)}\left( 
- \lambda^2_3\; ( \lambda^2_p - \lambda^2_2) +\lambda^2_p\, (\lambda^2_1 - s/\kappa)\right) \nabla\nabla^{\top}  \\
\nonumber
& =\frac{1} { \rho s^2 (\lambda^2_1 - \lambda^2_2) (\lambda^2_1- \lambda^2_3) } (\lambda^2_p - \lambda^2_2) (\lambda^2_1 - \lambda^2_3) 
 \nabla\nabla^{\top} \\
 \label{eq:SquareBracket}
& =\frac{1} { \rho s^2 (\lambda^2_1 - \lambda^2_2)} (\lambda^2_p - \lambda^2_2)  
 \nabla\nabla^{\top}.
\end{align}
The expression above  corresponds to the second term that arises by distributing the product on the left \eqref{eq:A.15} between the terms in the first and second lines. Therefore, distributing the product and substituting \ref{eq:SquareBracket} completes the derivation of $\mathbf{D}_1(x, s)$.

\end{itemize}

We note that for the adjoint equation to (\ref{eq:A.1}), if $\underline {\underline{\bf E}}^{*}(x, y;s)$ is the fundamental solution such that 
\begin{equation*}
\mathbf{B}^{*}(\partial_y, s) \underline {\underline{\bf E}}^{*}(x,y;s) = - \delta (y-x) \underline{\underline{\bf I}},
\end{equation*}
then we have 
\[
 \underline {\underline{\bf E}}^{*}(x,y;s) = \underline {\underline{\bf E}}^{\top}( x, y; s), 
\]
where $\underline {\underline{\bf E}}^{\top}( x, y ;s) $ is obtained from $\underline {\underline{\bf E}}(x, y; s)$
by transposing the rows and columns (see \cite{Ku:1979}, p.96, and \cite{HsWe:2008}, p.131). It is worth mentioning that we may recover the fundamental solutions given in \cite[p.95]{Ku:1979} (see also 
\cite{DaKo:1988} and \cite{Cakoni:2000}) for the sytstem of time-harmonic oscillation equations with $s$ and $\lambda^2_j $ replaced by $- i~\omega$ and $-\lambda^2_j$, respectively.
%
\subsection{2-D fundamental solution}
%
For interested readers, we have also included here the derivation of the fundamental solution 
 for the operator (\ref{eq:A.1}) in $\mathbb{R}^2$ as in the form of (\ref{eq:A.11}): 
\[
\underline {\underline{\bf E}}(x,y) = \sum_{k=1}^3 \mathbf{D}_k(x, s)\frac{1}{2\pi} K_0({\lambda_k |x - y|}), 
\]
where $K_0(\lambda_k|x-y|)$ is modified Bessel function of the first kind, and $\mathbf{D}_k(x, s)'s $ are matrices of differential operators given by
\begin{eqnarray} 
\mathbf{D}_1(x, s)&:=& \frac{1}{\rho s^2 (\lambda_1^2 - \lambda_2^2)}
 \left (
\begin{array} {l|l} 
\hspace{2mm}\;(\lambda^2_p - \lambda^2_2) \,\nabla \nabla^{\top}
&\hspace{4mm} \gamma \, \lambda^2_p \;\nabla \\ [2mm] \hline\\
\hspace{4mm} s\, \eta \,\lambda^2_p \,\nabla^{\top} & \hspace{4mm} \rho\, s^2\, (\lambda^2_1 - \lambda^2_p) 
\end{array} \right ), \label{eq:A.2} \\[4mm]
\mathbf{D}_2(x, s)&:=& \frac{1}{\rho s^2 (\lambda_2^2 - \lambda_1^2)}
 \left (
\begin{array} {l|l} 
\hspace{2mm}\;(\lambda^2_p - \lambda^2_1)\, \nabla \nabla^{\top}
& \hspace{4mm} \gamma \, \lambda^2_p \;\nabla \\ [2mm] \hline\\
\hspace{4mm} s\,\eta \,\lambda^2_p \,\nabla^{\top} &\hspace{4mm} \rho \,s^2\, (\lambda^2_2 - \lambda^2_p) 
\end{array} \right ), \\[4mm]
\mathbf{D}_3(x, s)&:=&- \frac{1}{\rho s^2}
 \left (
\begin{array} {l|l} 
\hspace{2mm}\;\nabla \nabla^{\top}- \lambda^2_3 \;\underline{\underline{\bf I}}\hspace{2mm}
&\hspace{4mm} {\mathbf 0} \\ [2mm] \hline\\
{\hspace{8mm}\mathbf 0} &\hspace{4mm} {\mathbf 0}
\end{array} \right ). 
 \end{eqnarray}
To derive (\ref{eq:A.16}), we begin with the operator $\mathbf{B}(\partial_x, s)$ in $\mathbb{R}^2$, namely
\[
\mathbf{B}(\partial_x , s) \begin{pmatrix}
{\mathbf u} \\
{\theta}\\
\end{pmatrix}
:= \left (
\begin{array} {ccc} 
  \Delta^* - \rho s^2 & \; & -\gamma \;\nabla \\ 
  -\eta~ s\; \nabla^{\top} & \; & \Delta - s/\kappa \\
 \end{array} \right )
 \begin{pmatrix}
{\mathbf u} \\
\theta 
\end{pmatrix},
\]
where the entries of the matrix are given as
\begin{alignat*}{6}
B_{ij} =\,& (\mu \Delta -  \rho s^2) \delta_{ij} + (\lambda + \mu) \frac{\partial^2}{\partial x_i \partial x_j}, &\qquad& (i, j = 1, 2),\\
B_{i3} =\,& -\gamma \;\frac{\partial}{\partial_i} &\qquad& (i =1, 2),\\
B_{3j} =\,& -\eta \;s \;\frac{\partial}{\partial j} &\qquad& (j = 1, 2), \\
B_{33} =\,& \Delta -\frac{s}{\kappa}\,. &&
\end{alignat*}
We begin with the determinant of $\mathbf{B}(\partial_x, s) $:
\begin{equation*}
det\; \mathbf{B} = \left | \begin{array}{ll|l}
B_{11} & B_{12} & B_{13} \\
B_{21} & B_{22} & B_{23} \\ \hline
B_{31} & B_{32} & B_{33} 
\end{array} \right |
 = \mu (\lambda + 2 \mu) \left( \Delta^3 + a_1 \Delta^2 + a_2 \Delta + a_3 \right). 
\end{equation*}
Here the coefficients $a_1, a_2$ and $a_3, $ are given explicitly :
\begin{eqnarray*}
a_1 &= & - \frac{1}{ \mu (\lambda + 2 \mu) } \left(\frac{s}{\kappa} \left( 2\mu^2 +( \gamma\, \eta \,\kappa + \lambda) \mu \right) + (\lambda +3 \mu) \rho \,s^2 \right),
\\
a_2 & = & \frac{1} {\mu (\lambda + 2 \mu)} \left( \frac{s}{\kappa} \left( \gamma\,\eta\, \kappa +\lambda + 3\mu\right)\, \rho s^2 + \rho^2\, s^4 \right), 
\\
a_3 &= & - \frac{1}{\mu (\lambda + 2\mu)} \left( \frac{s}{\kappa} \left( \rho^2 s^4\right) \right).
\end{eqnarray*}
One may show that 
\begin{eqnarray} \label{eq:A.8}
det \,\mathbf{B} &=&\mu(\lambda + 2 \mu) (\Delta - \lambda_1^2) (\Delta - \lambda_2^2) (\Delta - \lambda_3^2), 
\end{eqnarray}
provided the constants $\lambda_1^2, \lambda_2^2, \lambda_3^2$ satisfy the dispersion relations in \eqref{eq:2.7a} 
\begin{alignat*}{6}
\lambda_1^2 + \lambda_2^2 =\,& \frac{s}{\kappa} + \frac{\gamma \,\eta\, s} {\lambda +2 \mu} + \lambda_p^2, &\quad \qquad& \lambda_p^2 =\,& \frac{\rho \,s^2} {\lambda + 2 \mu}, \\
\lambda_1^2 \,\lambda_2^2 =\,& \frac{s}{\kappa}\, \lambda_p^2, &\quad \qquad& \lambda_3^2 =\,& \frac{\rho \,s^2}{\mu}.
\end{alignat*}
The cofactor of the element in $\mathbf{B}(\partial_x,s)$ for $i,j = 1, 2,$ is given by  
\begin{align*}
A_{ij} =\,& -\left( (\lambda + \mu) (\Delta - s/\kappa)-\gamma\, \eta\, s \right)  \frac{\partial^2}{\partial x_i \partial x_j}\\
&\,+ \delta_{ij} \left( (\mu \Delta - \rho s^2) (\Delta - s/\kappa) + \left( (\lambda + \mu)( \Delta - s/\kappa) - \gamma\, \eta\, s\right) \Delta \right)\\
=\,&-\left((\lambda + \mu) (\Delta - s/\kappa)-\gamma\, \eta\, s \right) \frac{\partial^2}{\partial x_i \partial x_j} \\
&\,+\left( (\lambda + 2 \mu) (\Delta - \lambda_1^2) (\Delta - \lambda_2^2)\right) \,\delta_{ij}, \\
A_{i3}=\,& \eta \,s (\mu \Delta -\rho s^2)\frac{\partial}{\partial x_i} = \eta \mu\,s ( \Delta - \lambda^2_3) \frac{\partial}{\partial x_i}, \\
A_{3j} =\,& \gamma(\mu \Delta - \rho s^2) \frac{\partial}{\partial x_j} = \gamma\, \mu( \Delta - \lambda_3^2) \frac{\partial}{\partial x_j},\\
A_{33} =\,& ( \mu \Delta - \rho s^2)^2 + (\lambda + \mu)( \mu \Delta - \rho s^2) \Delta 
= \mu(\lambda + 2 \mu) (\Delta - \lambda_3^2)(\Delta - \lambda_p^2).
 \end{align*}
Suppose now $\underline {\underline{ \bf E}}(x,y; s)$ is the fundamental matrix for $\mathbf{B}(\partial_x, s)$ such that 
\begin{equation}\label{eq:A.28}
\mathbf{B}(\partial_x, s) \underline {\underline{\bf E}}(x,y) = - \delta (x-y) \underline{\underline{\bf I}}.
\end{equation}
Replacing $\underline {\underline{ \bf E}}(x,y;s)$ in (\ref{eq:A.28}) by the adjoint of cofactor matrix 
$A_{ij}(\partial_x, s)^{\prime}_{3 \times 3} \varphi, $ 
where $\varphi$ is a scalar function and the prime means the transposition, we see that
\begin{equation*}
 \sum_{k=1}^3 B_{ik}(\partial_x, s) A_{jk}(\partial_x,s) \varphi 
 \equiv \delta_{ij} det\, \mathbf{B}(\partial_x,s) \varphi  = \delta_{ij} \mu (\lambda +2 \mu) (\Delta -\lambda^2_1) (\Delta - \lambda^2_2) (\Delta - \lambda_3^2) \varphi.
 \end{equation*} 
This shows that in order for $\underline {\underline{ \bf E}}(x,y;s)= A_{ij}(\partial_x, s)^{\prime}_{3 \times 3} \varphi,$ to be the matrix of the fundamental solution, we require $\varphi$ to be the solution of 
 \begin{eqnarray*} 
 \mu (\lambda +2 \mu)(\Delta -\lambda^2_1) (\Delta - \lambda^2_2) (\Delta - \lambda_3^2) \varphi = - \delta (x-y). 
 \end{eqnarray*}
A simple manipulation yields 
 \begin{equation}\label{eq:A.29}
 \varphi = \frac{1}{\mu (\lambda + 2 \mu)} \sum_{k=1}^3 
\frac{1} {(\lambda^2_k - \lambda^2_{k+1})(\lambda^2_k - \lambda^2_{k+2})}\; 
K_0(\lambda_k|x-y|) \quad \mbox{with} \quad \lambda_4 = \lambda_1,\; \lambda_5 = \lambda_2 
\end{equation}
and we have 
\[
 {A}_{ij}(\partial_x, s)^{\prime} _{3 \times 3} 
:= \left (
\begin{array} {l|l} 
\hspace{4mm}( \lambda + 2\mu) ^{-1} ( \Delta - \lambda_1^2)(\Delta - \lambda^2_2) \bf I\\ 
 - \left( (\lambda + \mu)(\Delta - s/\kappa) - \gamma\, \eta\, s\right) \nabla \nabla^{\top}
& \hspace{8mm} \mu \,\eta (\Delta - \lambda^2_3) \;\nabla \\ [3mm]\hline \\
\hspace{8mm}\mu\, \eta \, s (\Delta - \lambda^2_3) \nabla^{\top} & \hspace{4mm}
\mu (\lambda + 2 \mu) (\Delta -\lambda_p^2)(\Delta -\lambda^2_3) \\
 \end{array} \right ) .
\]
Substitution of $\varphi$ from (\ref{eq:A.29}) results finally
\[
\underline {\underline{\bf E}}(x,y;s) = \sum_{k=1}^3 \mathbf{D}_k(x, s) \frac{1}{2 \pi} K_0(\lambda_k |x - y |), 
\]
where $\mathbf{D}_k(x, s)'s $ are matrices of differential operators given by
\begin{eqnarray} 
\mathbf{D}_1(x, s)&:=& \frac{1}{\rho s^2 (\lambda_1^2 - \lambda_2^2)}
 \left (
\begin{array} {l|l} 
\hspace{2mm}\;(\lambda^2_p - \lambda^2_2) \,\nabla \nabla^{\top}
&\hspace{4mm} \gamma\,\lambda_p^2 \;\nabla \\ [2mm] \hline\\
\hspace{4mm} s\, \eta \,\lambda^2_p \,\nabla^{\top} & \hspace{4mm} \rho\, s^2\, (\lambda^2_1 - \lambda^2_p) 
\end{array} \right ), \\[4mm]
\mathbf{D}_2(x, s)&:=& \frac{1}{\rho s^2 (\lambda_2^2 - \lambda_1^2)}
 \left (
\begin{array} {l|l} 
\hspace{2mm}\;(\lambda^2_p - \lambda^2_1)\, \nabla \nabla^{\top}
& \hspace{4mm} \gamma \, \lambda^2_p \;\nabla \\ [2mm] \hline\\
\hspace{4mm} s\,\eta \,\lambda^2_p \,\nabla^{\top} &\hspace{4mm} \rho \,s^2\, (\lambda^2_2 - \lambda^2_p) 
\end{array} \right ), \\[4mm]
\mathbf{D}_3(x, s)&:=& \frac{1}{\rho s^2}
 \left (
\begin{array} {l|l} 
\hspace{2mm}\; \lambda^2_3 \;\underline{\underline{\bf I}} - \nabla \nabla^{\top} \hspace{2mm}
&\hspace{4mm} {\mathbf 0} \\ [2mm] \hline\\
{\hspace{8mm}\mathbf 0} &\hspace{4mm} {\mathbf 0}
\end{array} \right ) .
 \end{eqnarray}
 \end{appendix}
 
%

\end{document}